\setlist[enumerate,1]{label=\arabic*., leftmargin=18pt, itemsep=0pt, topsep=0pt}
\definecolor{lightgrey}{rgb}{.804,.804,.756}
\definecolor{myred}{rgb}{.545,0,0}
\definecolor{myblue}{rgb}{.024,.15,.645}    
\definecolor{mygreen}{rgb}{0,.455,0}
\newcommand{\N}{\mathbb{N}}
\newcommand{\Z}{\mathbb{Z}}
\newcommand{\Ker}{\operatorname{Ker}}
\renewcommand{\Im}{\operatorname{Im}}
\newcommand{\Inn}{\operatorname{Inn}}
\newcommand{\Orb}{\operatorname{Orb}}
\newcommand{\OO}{\operatorname{\mathcal{O}}}
\newcommand{\Al}{\operatorname{Al}}
\newcommand{\Conj}{\operatorname{Conj}}
\newcommand{\As}{\operatorname{As}}
\newcommand{\oAs}{\operatorname{\overline{As}}}
\newcommand{\Stab}{\operatorname{Stab}}
\newcommand{\Transp}{T}
\newcommand{\cb}{$\overline{C}$}
\newcommand{\cc}{$C$}
\newcommand{\CC}{\mathfrak{C}}
\newcommand{\ff}{\operatorname{f}}
\newcommand{\Ab}{\operatorname{Ab}}
\newcommand{\Abf}{\Ab_{\ff}}
\newcommand{\Gf}{G_{\ff}}
\newcommand{\Tor}{\operatorname{Tor}}
\newcommand{\opi}{\operatorname{\overline{\pi}}}
\newcommand{\Art}{\mathcal{A}}
\newcommand{\Deh}{\mathcal{D}}
\newcommand{\fg}{\operatorname{fg}}
\renewcommand{\ng}{\operatorname{ng}}
\newcommand{\ig}{\operatorname{ig}}
\renewcommand{\gg}{\operatorname{g}}
\renewcommand{\lg}{\operatorname{lg}}
\newcommand{\Supp}{\operatorname{Supp}}
\newcommand{\RSupp}{\operatorname{RSupp}}
\newcommand{\RR}{\operatorname{R}}
\newcommand\op{\mathrel{\triangleleft}}
\newcommand{\twoheadrightarrowdown}{%
\mathrel{\reflectbox{\rotatebox[origin=c]{-90}{$\twoheadrightarrow$}}}}
	\theoremstyle{plain}
\newtheorem{thm}{Theorem}[section]
\newtheorem{lmm}[thm]{Lemma}
\newtheorem{crl}[thm]{Corollary}
\newtheorem{prp}[thm]{Proposition}
	\theoremstyle{definition}
\newtheorem{dfn}[thm]{Definition}
\newtheorem{xmp}[thm]{Example}
	\theoremstyle{remark}
\newtheorem{rmr}[thm]{Remark}
\newtheorem{letterthm}{Theorem}
\setlist{nolistsep}
\begin{document}

\title{Conjugation groups \\ and structure groups of quandles}

\begin{abstract} 
Quandles are certain algebraic structures showing up in different mathematical contexts. A group $G$ with the conjugation operation forms a quandle, $\operatorname{Conj}(G)$. In the opposite direction, one can construct a group $\operatorname{As}(Q)$ starting from any quandle $Q$. These groups are useful in practice, but hard to compute. We explore the group $\operatorname{As}(\operatorname{Conj}(G))$ for so-called \emph{$\overline{C}$-groups} $G$. These are groups admitting a presentation with only conjugation and power relations. Symmetric groups $S_n$ are typical examples. We show that for $\overline{C}$-groups, $\operatorname{As}(\operatorname{Conj}(G))$ injects into $G \times  \mathbb{Z}^m$, where $m$ is the number of conjugacy classes of $G$. From this we deduce information about the torsion, center, and derived group of $\operatorname{As}(\operatorname{Conj}(G))$. As an application, we compute the second quandle homology group of $\operatorname{Conj}(S_n)$ for all $n$, and unveil rich torsion therein.
\end{abstract}

\keywords{Conjugation quandle, structure group, quandle cohomology, symmetric group, conjugation group, Yang--Baxter equation.}

\subjclass[2020]{
 	57K12,   	
 	20B30,   	
 	20F05,   	
 	16S15,   	
 	16T25.   	
 	}

	\author[Victoria Lebed]{Victoria Lebed}
	\address{Normandie Univ, UNICAEN, CNRS, LMNO, 14000 Caen, France}
	\email{victoria.lebed@unicaen.fr}

\maketitle

\section{Introduction}\label{Sec:Intro}

A \emph{quandle} is a set $Q$ endowed with an idempotent binary operation~$\op$ for which all right translations are quandle automorphisms. In other words, it satisfies the following axioms for all $a,b,c \in Q$:
\begin{enumerate}
\item idempotence: $a \op a = a$;
\item bijectivity of the right translations $R_c \colon Q \to Q,\ a \mapsto a \op c$;
\item self-distributivity: $(a \op b) \op c = (a \op c) \op (b \op c)$.
\end{enumerate}
Quandles and generalisations thereof have remarkably wide ranging applications: construction of invariants of knots and knotted surfaces (the three quandle axioms encoding the three Reidemeister moves), classification of Hopf algebras, construction and study of set-theoretic solutions to the Yang--Baxter equation, integration of Leibniz algebras (non-symmetric generalisations of Lie algebras). See the surveys \cite{AndrGr, Kinyon, CarterSurvey, ElhamdadiNelsonBook, KamadaSurvey, LebedSDYBESurvey} for more detail. Recently, the relevance of quandles in theoretical physics was brought to light as well \cite{SDPhysics,pracks}. 

Quandles can be constructed starting from mathematical structures of very different nature. The two fundamental examples are:
\begin{enumerate}
\item \emph{Alexander quandles} $\Al(M)$, i.e., $\Z[t^{\pm 1}]$-modules $M$ with 
\[a \op b = ta+(1-t)b,\]
\item \emph{conjugation quandles} $\Conj(G)$, i.e., groups $G$ with the conjugation operation 
\[a\op b = b^{-1}ab.\]
\end{enumerate}

The \emph{structure group} (also called the \emph{associated}, \emph{enveloping}, or \emph{adjoint group}) of a quandle $(Q,\op)$ is defined by the following quadratic presentation:
\begin{equation}\label{Eq:StrGroups} 
\As(Q) \ = \ \langle \ e_a, \, a \in Q \,|\, e_a e_b = e_b e_{a\op b}, \, a,b \in Q \ \rangle.
\end{equation}
It can be considered from different viewpoints. 

First, it is a deformation of a free abelian group, and inherits nice properties thereof. In particular, for finite $Q$, $\As(Q)$ is virtually free abelian: carefully chosen powers of its generators $e_a^{d_a}$ form a central free abelian subgroup with a \emph{Coxeter-like finite quotient} $\oAs(Q)$ \cite{LebVen3}. This gives a rich source of nicely behaving groups. 

Second, the structure group construction yields a functor from the category of quandles to that of groups, which is left adjoint to the conjugation quandle functor.

Third, $\As(Q)$ is an invariant of the quandle $Q$, which opens quandle theory to group-theoretic tools. In particular, a sufficient understanding of $\As(Q)$ allows one to compute the second \emph{quandle homology} group $H_2(Q)$, which is another quandle invariant crucial for applications and in general difficult to compute (see the above referenced surveys for detail). This can be done via Eisermann's formula \cite{Eisermann} 
\begin{equation}\label{E:Eisermann} 
 H_2(Q)\ \cong\ \bigoplus_{\OO \in \Orb(Q)} \left(\Stab_{\As(Q)}(a_{\OO}) \cap \Ker (\varepsilon)\right)_{Ab}.
\end{equation}
Here $\Orb(Q)$ is the set of the \emph{orbits} of $Q$ with respect to the action of the \emph{inner group} $\Inn(Q)$, which is the group of permutations of $Q$ generated by the right translations $R_c$; $a_{\OO} \in Q$ is any representative of the the orbit $\OO$; $\As(Q)$ acts on $Q$ via $a \cdot e_b = a \op b$; and $\varepsilon$ is the \emph{degree map}, which is the group morphism defined by
\begin{align}
\varepsilon \colon \As(Q) &\to \Z,\label{E:Degree}\\
e_a &\mapsto 1.\notag
\end{align}
The free part of $H_2(Q)$ is known to be $\Z^{c(Q)(c(Q)-1)}$ \cite{EtGrRackCohom,LiNel}. Here $c(Q)$ is the number of the $\Inn(Q)$-orbits of $Q$; for $Q=\Conj(G)$, this is precisely the number of conjugacy classes of $G$. It is the torsion part that is used in applications, and it can be highly non-trivial.

Structure groups are very difficult to compute in practice. We know of several notable examples only: an explicit description of the groups $\As(\Al(M))$ for connected (with respect to the $\Inn(\Al(M))$ action) Alexander quandles \cite{ClauwensAl}, resulting in a computation of $H_2$ for some of them \cite{GarIglVen}; a classification of all quandles with free abelian structure groups, together with partial results on the structure groups and homology of abelian quandles (having commuting right translations) \cite{LebMorAbelian}; and full or partial results for certain more involved classes of quandles in \cite{NosakaAdjoint}. Note that in all these cases, $H_2$ does have torsion in general. 

\medskip 
The original aim of this work was to compute the structure group and the second homology group of the conjugation quandle of a symmetric group $S_n$. The tools we developed turned out to apply to a much wider class of groups, which we call \emph{\cb-groups}. These are groups admitting a presentation with only conjugation relations ($b^{-1}ab=c$) and power relations ($a^k=1, \ k \geq 2$). Without loosing generality, we allow only one power relation per conjugacy class. If only conjugation relations appear, we talk about \emph{conjugation groups}, or briefly \emph{\cc-groups}. Among \cb-groups one finds free and free abelian groups; Artin and Coxeter groups, and various generalisations thereof (virtual, welded, homotopy, etc.) \cite{VirtualArtin, DarneHomotopyWelded}; cactus and twin groups, and various generalisations thereof \cite{VirtualTwin, mostovoy2023roundtwingroupsstrands, chemin2024minimalpresentationfinitequotients}; Thompson's group $F$ \cite{ChouraquiThompson,SzymikThompson}; knot groups; structure groups and their finite quotients; and many more. Indeed, the commutation relation, for instance, can be interpreted as $b^{-1}ab=a$, the braid relation $aba=bab$ as $b^{-1}ab=c$ and $a^{-1}ca=b$ ($c$ being an extra generator), etc.
 
\cc-groups appeared in \cite{KulikovCGroups,KulikovCGroupsGeom}, where it was shown that, for any $n \geq 2$, being a \cc-group is the same as being the fundamental group of the complement of an $n$-dimensional compact orientable manifold without boundary in $\mathbb{S}^{n+2}$. We borrow this term, even if it has an alternative usage in group theory. Note that these groups independently appeared in \cite{LOG} in a different context and under a different name, that of \emph{LOG-groups} (LOG = labelled oriented graph). 

There are two easy ways to extract some partial information from an element of the structure group 
\[A(G) \ := \ \As(\Conj(G)).\] 
 The first one is to abelianise. Relation~\eqref{Eq:StrGroups} becomes then $e_a = e_{a\op b}$, thus all generators from the same orbit become undistinguishable. Hence a map 
\[\Ab \colon A(G) \twoheadrightarrow \Z^{\CC(G)},\] 
where $\CC(G)$ is the set of \emph{conjugacy classes} of $G$. This map works for all quandles. The second one is proper to conjugation quandles: one can return to the original group $G$ via the group morphism defined by
\begin{align*}
\pi \colon A(G) &\twoheadrightarrow G,\\
e_a &\mapsto a.
\end{align*}
It turns out that for a \cb-group $G$, the combination of these two simple maps suffices to understand the entire group $A(G)$:
\begin{letterthm}\label{T:Inj}
For a \cb-group $G$, the following map is injective:
\[\pi \times \Ab \colon A(G) \to G \times \Z^{\CC(G)}.\]
\end{letterthm}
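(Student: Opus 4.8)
The plan is to turn the statement into a question about the commutator subgroup of $\AC(G)$, using a structural identity available inside every structure group. First I would record that
\[ w^{-1} e_a w \;=\; e_{a^{\pi(w)}}, \qquad a\in G,\ w\in\AC(G), \]
where $a^g := g^{-1}ag = a\op g$. On a generator $w=e_b$ this is exactly the defining relation $e_b^{-1}e_ae_b = e_{a\op b}$, and it propagates multiplicatively in $w$ because $\pi$ is a homomorphism. The decisive consequence is that $\ker\pi$ is \emph{central} in $\AC(G)$: if $\pi(w)=1$ then $w$ commutes with every generator $e_a$, hence with all of $\AC(G)$.

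Next I would unwind the two kernels. Abelianising the presentation~\eqref{Eq:StrGroups} collapses $e_a$ and $e_{a\op b}$, so $\Ab$ is the full abelianisation map and $\ker\Ab = [\AC(G),\AC(G)]$. Hence $\ker(\pi\times\Ab) = \ker\pi\cap[\AC(G),\AC(G)]$. Since $\pi$ is onto, it carries $[\AC(G),\AC(G)]$ onto $[G,G]$, and the kernel of that restriction is precisely $\ker\pi\cap[\AC(G),\AC(G)]$. Thus Theorem~\ref{T:Inj} is equivalent to the clean assertion that $\pi$ restricts to an isomorphism $[\AC(G),\AC(G)]\xrightarrow{\ \sim\ }[G,G]$, i.e.\ that the central subgroup $\ker\pi$ meets the commutator subgroup trivially.

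All of these reductions hold for an arbitrary group, so the \cb-hypothesis must be exactly what forces $\ker\pi\cap[\AC(G),\AC(G)]=1$. To exploit it I would fix a presentation $G=\langle S\mid R\rangle$ with only conjugation relations $s^{-1}ts=u$ and power relations $t^k=1$. Centrality of $\ker\pi$ makes the commutator $c(g,h):=[\tilde g,\tilde h]$ of arbitrary lifts independent of the lifts; the $c(g,h)$ generate $[\AC(G),\AC(G)]$ and satisfy $\pi(c(g,h))=[g,h]$, so injectivity of $\pi|_{[\AC(G),\AC(G)]}$ amounts to showing that $\prod_i[g_i,h_i]=1$ in $G$ already forces $\prod_i c(g_i,h_i)=1$ in $\AC(G)$. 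I would reduce the relations among commutators in $[G,G]$ (via a Reidemeister--Schreier analysis of $G\to G_{\mathrm{ab}}$) to the defining relations $R$, and check that each lifts. The two relation types behave harmlessly: a conjugation relation is \emph{automatically} valid among the generators, since the identity above gives $e_s^{-1}e_te_s=e_{t^s}=e_u$; a power relation only produces $e_t^{\,k}\in\ker\pi$, but $\Ab(e_t^{\,k})=k\,[t]\neq0$, where $[t]$ is the basis vector of the conjugacy class of $t$, so $e_t^{\,k}$ lies outside $[\AC(G),\AC(G)]$ and cannot enter the kernel.

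The step I expect to be the main obstacle is precisely this lifting/well-definedness: one must show that no product of commutators trivial in $G$ acquires a nontrivial central correction in $\AC(G)$. In the language of the central extension $1\to\ker\pi\to\AC(G)\xrightarrow{\pi}G\to1$ this is the vanishing of the transgression $H_2(G)\to\ker\pi$, and it is here that restricting to conjugation and power relations is indispensable: for a general presentation an extra relator could plant a central element inside $[\AC(G),\AC(G)]$ and break injectivity. Controlling this interaction across the different conjugacy classes, so that $\Ab$ faithfully records every element of the central subgroup $\ker\pi$, is the technical heart of the argument.
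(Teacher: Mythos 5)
Your reductions are all correct, but none of them is where the difficulty of the theorem lies. The identity $w^{-1}e_aw=e_{a\op\pi(w)}$, the centrality of $\Ker(\pi)$, the identification $\Ker(\Ab)=A(G)'$, and the resulting equivalence of Theorem~\ref{T:Inj} with the assertion that $\pi$ restricts to an isomorphism $A(G)'\cong G'$ (this is exactly Corollary~\ref{C:Derived}, which the paper deduces \emph{from} the theorem) all hold for an arbitrary group $G$, as you observe yourself. Consequently, every bit of content specific to \cb-groups must sit in the one step you defer: showing that a product of commutators which is trivial in $G$ is already trivial in $A(G)$, equivalently that the transgression $H_2(G)\to\Ker(\pi)$ vanishes. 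Your proposal identifies this step, calls it ``the main obstacle'' and ``the technical heart'', sketches why the two relation types should be harmless --- and never proves it. That is a genuine gap, not a presentational one: what you have is a correct reformulation plus the two opening observations (conjugation relators lift on the nose; a power relator $a^k=1$ lifts to a central element $e_a^k$ with nonzero $\Ab$-image), which are precisely the paper's Lemma~\ref{L:CentralPowers} and the elements $t_{\OO}$ of~\eqref{E:T}, i.e.\ the \emph{preparation} for its proof rather than the proof.

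Concretely, two things are missing. First, an element of $\Ker(\pi)\cap A(G)'$ is a word in generators $e_a$ indexed by \emph{all} $a\in G$, while the relators of $\Pi$ are words in the finite alphabet $\Gamma$; one needs the normalization~\eqref{E:ToRepresentatives}--\eqref{E:ToGenerators} expressing every $e_a$ through class representatives and generator-indexed letters before any ``imitation of a trivialization in $G$'' can even begin. Your Reidemeister--Schreier plan does not engage with this mismatch: the commutator generators $c(g,h)$ are indexed by arbitrary pairs of elements of $G$. Second, one must show that imitating in $A(G)$ a derivation of $\pi(g)=1_G$ from the relators of $\Pi$ leaves \emph{exactly} a product $\prod_j t_{\OO_j}^{p_j}$ of the central elements attached to power relations, with no other residue; only then does your closing remark apply, since $\Ab\bigl(\prod_j t_{\OO_j}^{p_j}\bigr)=\sum_j p_j\,k(\OO_j)\,c_{\OO_j}$ vanishes only if all $p_j=0$ (this uses freeness of $\Z^{\CC(G)}$ and the standing hypothesis of at most one power relation per conjugacy class). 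Your observation that a single $e_t^k$ ``cannot enter the kernel'' because $\Ab(e_t^k)\neq 0$ concerns individual elements; what must be excluded is that the \emph{accumulated} central correction --- a priori an arbitrary element of $\Ker(\pi)$ --- lies nontrivially in $A(G)'$, and without the rewriting step you do not know this correction has the special form above. These two steps are exactly the body of the paper's proof; until they are carried out, the proposal remains a sound plan whose decisive step is unproven.
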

The image of this map is described in terms of a pullback in Theorem \ref{T:Pullback}.

As a result, various properties of the group $A(G)$ can be deduced from those of the \cb-group $G$, as long as one understands the conjugacy classes of $G$ reasonably well. Here are only some useful corollaries. To announce some of them, we will need to fix a \cb-type presentation $\Pi$ of $G$. These properties will be made explicit for our running example $G=S_n$, with its classical presentation.
\begin{enumerate}
\item The kernel of the map $\pi$ is a subgroup of $\Z^{\CC(G)}$, hence free abelian. A more refined analysis (Proposition \ref{P:Kernel}) yields an isomorphism 
\[K(G) \ := \ \Ker(\pi) \ \cong\ \Z^{\CC'(G)},\] 
where $\CC'(G)$ is the set of conjugacy classes of $G$ containing no infinite-order generators from $\Pi$. Thus $A(G)$ is a central extension of $G$. 
 We study the properties of the associated group $2$-cocycle from $H^2(G,\Z^{\CC'(G)})$ (Propositions \ref{P:2cocycle} 
 and \ref{P:2cocycleImage}). In particular, this yields a short exact sequence
\[0 \ \to \ \Z^{P(n)}\ \to \ A(S_n) \ \overset{\pi}{\to} \ S_n \ \to \ 0,\] 
$P(n)$ being the \emph{partition number} of $n$, which is the number of conjugacy classes of $S_n$.
\item The map $\pi$ restricts to an isomorphism of the derived groups:
\[A(G)' \ \cong\ G'.\]
\item The torsion of $A(G)$ is $\Ker(\Ab) \cap \pi^{-1}(\Tor(G))$, which is isomorphic to a subgroup of $\Tor(G)$. For a \textbf{finite} \cb-group $G$, $\Tor(G)=G$, hence 
\[\Tor(A(G)) \ =\ \Ker(\Ab) \ =\ A(G)' \ \cong\ G'.\] 
For $S_n$, one obtains the alternating group:
\[\Tor(A(S_n)) \ =\ A(S_n)'\ \cong\ S_n' \ = \ A_n.\]
In particular, contrary to \cc-groups, $S_n$ is not a subgroup of $A(S_n)$.
\item The center of $A(G)$ is
\[Z(A(G)) \ = \ \pi^{-1}(Z(G)).\]
For a centerless group, this is a subgroup of $\Z^{\CC(G)}$, hence free abelian. In particular, for $n>2$, one gets
\[Z(A(S_n)) \ = \ K(S_n) \ \cong\ \Z^{P(n)}.\] 
\item An element $g \in A(G)$ acts trivially on $a \in G$ if and only if $\pi(g)$ and $a$ commute in $G$. Thus, 
\[\Stab_{A(G)}(a) \ = \ \pi^{-1}(\Stab_G(a)).\]
We will develop this idea further in Proposition \ref{P:H2Cgroup}. Since commutation in $S_n$ is well understood, we will use this to compute in Theorem \ref{T:H2Sn} the torsion of $H_2(\Conj(S_n))$, which turns out to be very rich. 
\end{enumerate}

\medskip
While the above subgroup interpretation of $A(G)$ is instrumental in studying its properties, an alternative direct product type description is better adapted for other purposes. To get to that, we first recall a somewhat surprising characterisation of \cc-groups and certain \cb-groups: 

\begin{letterthm}\label{T:CGroupIFF}
\begin{enumerate}
\item \cite{SinghEtAlDehnQuandles} A group $G$ is a \cc-group if and only if $G \cong \As(Q)$ for some quandle $Q$. 
\item \cite{AkitaEtAlFiniteQuotient} A finite centerless group $G$ is a \cb-group if and only if $G \cong \oAs(Q)$ for some finite quandle $Q$. 
\end{enumerate}
\end{letterthm}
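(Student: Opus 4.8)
\section*{Proof proposal}

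The plan is to treat the two equivalences separately, both resting on the elementary observation that the defining relation $e_a e_b = e_b e_{a \op b}$ of a structure group is the conjugation relation $e_b^{-1} e_a e_b = e_{a \op b}$ in disguise. For (1) the backward direction is then immediate: presentation \eqref{Eq:StrGroups} is already a conjugation presentation, so $\As(Q)$ is a \cc-group. For the forward direction, let $G = \langle X \mid R\rangle$ have only conjugation relations $b^{-1}ab = c$, and take $Q$ to be the sub-quandle of $\Conj(G)$ generated by $X$, i.e.\ the union of the conjugacy classes meeting $X$, under $a \op b = b^{-1}ab$. I would show that the tautological map $\pi\colon \As(Q) \to G$, $e_a \mapsto a$, is an isomorphism by exhibiting an inverse $\phi\colon G \to \As(Q)$, $x \mapsto e_x$. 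Iterating the relation gives $w^{-1} e_a w = e_{\pi(w)^{-1} a \pi(w)}$ for every word $w$ in the $e_x$; reading each relation $b^{-1}ab = c$ of $G$ through this identity turns it into $e_{b^{-1}ab} = e_c$, which holds in $\As(Q)$ because $b^{-1}ab = c$ already in $Q$, so $\phi$ is well defined. Surjectivity of $\pi$ is clear since $X \subseteq Q$ generates $G$, and because every element of $Q$ is a $G$-conjugate of some $x \in X$ the group $\As(Q)$ is generated by $\{e_x : x \in X\}$. Both composites thus fix a generating set and are the identity.

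For (2), recall from \cite{LebVen3} that for finite $Q$ the group $\oAs(Q)$ is $\As(Q)/N$, where $N = \langle e_a^{d_a}\rangle$ is the central free-abelian subgroup of finite index generated by the canonical powers. Adjoining the relations $\bar e_a^{d_a} = 1$ to the conjugation presentation of $\As(Q)$ yields a presentation of $\oAs(Q)$ with only conjugation and power relations, so $\oAs(Q)$ is a finite \cb-group; this is the backward direction. For the forward direction, given a finite centerless \cb-group $G$ I would take the finite quandle $Q = \Conj(G)$, so that $\As(Q) = A(G)$ and $\oAs(Q) = A(G)/N$, and aim to prove $\oAs(Q) \cong G$. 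Since each $e_a^{d_a}$ is central, its image $a^{d_a} = \pi(e_a^{d_a})$ lies in $Z(G)$, which is trivial; hence $N \subseteq \Ker(\pi) = K(G)$ and $\pi$ descends to a surjection $\bar\pi\colon \oAs(Q) \twoheadrightarrow G$. This is the sole point where centerlessness is used.

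Injectivity of $\bar\pi$ is equivalent to the equality $N = K(G)$, and I expect this to be the main obstacle. Both subgroups are central and of finite index in $A(G)$ — the quotients $\oAs(Q)$ and $G$ being finite — so $N$ sits as a finite-index subgroup inside the free-abelian group $K(G) \cong \Z^{\CC'(G)}$, where $\CC'(G) = \CC(G)$ since a finite group has no infinite-order generators (by Proposition \ref{P:Kernel} together with Theorem \ref{T:Inj}). What remains is to rule out a proper inclusion, i.e.\ to show $[K(G):N] = 1$. I would do this by matching the two sets of generators orbit by orbit: identifying the canonical period $d_a$ with $\operatorname{ord}_G(a)$ and checking that the powers $e_a^{d_a}$ exhaust the explicit generators of $K(G)$ furnished by Proposition \ref{P:Kernel}; equivalently, one may build a splitting $\bar\phi\colon G \to \oAs(Q)$, $x \mapsto \bar e_x$, which is well defined exactly when each power relation of $G$ lands in $N$. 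Everything else is routine bookkeeping with the two presentations.
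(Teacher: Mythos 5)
Your part (1) is correct and essentially coincides with the paper's proof: your $Q$ is exactly the paper's Dehn quandle $\Gamma^G$, and your direct check that $e_x\mapsto x$ and $x\mapsto e_x$ are mutually inverse replaces the paper's appeal to its presentation of $\As(\Gamma^G)$ (Theorem \ref{T:DehnGenRel}). The backward directions of both parts are fine as well.

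The forward direction of part (2), however, has a fatal gap, located exactly where you predicted the main obstacle: for $Q=\Conj(G)$ the equality $N=K(G)$ is \emph{false} in general, so no matching of generators can establish it. Each generator of $N$ is a power of a single $e_a$, so $\Ab(N)\subseteq\Z^{\CC(G)}$ is spanned by elements $d_{\OO}c_{\OO}$, where $d_{\OO}$ is the common value of $d_a$ for $a\in\OO$; since $G$ is centerless, $d_{\OO}=\operatorname{ord}_G(a)\geq 2$ for every nontrivial class $\OO$. On the other hand, by Proposition \ref{P:Kernel}, $K(G)$ contains for every $\OO\in\CC_{\ng}(G)$ the element $t_{\OO}$ of \eqref{E:Tng}, whose $\Ab$-image equals $c_{\OO}$ minus an integer combination of basis vectors $c_{\OO'}$ with $\OO'\in\CC_{\gg}(G)$, i.e.\ has coefficient $1$ on $c_{\OO}$. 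Since $\Ab$ is injective on $K(G)=\Ker(\pi)$ (Theorem \ref{T:Inj}), $t_{\OO}\in N$ would force $d_{\OO}=1$. Hence $N\subsetneq K(G)$ as soon as some nontrivial conjugacy class of $G$ contains no generator, and a determinant computation gives $[K(G):N]=\prod_{\OO\in\CC_{\ng}(G)}d_{\OO}$. Concretely, for $G=S_3$ the $3$-cycle class gives index $3$; your section $\bar\phi$ does exist (that construction is correct, but it proves that the central extension splits, not that $\bar\pi$ is injective, so your claimed equivalence between building $\bar\phi$ and proving $N=K(G)$ is also wrong), whence $\oAs(\Conj(S_3))\cong S_3\times\Z_3$, of order $18$. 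As $\oAs(\Conj(G))$ does not depend on any presentation, no change of presentation can repair this.

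The repair is to run part (2) with the same quandle you used in part (1), as the paper does: take $Q=\Gamma^G$. Then the problematic classes of $\CC_{\ng}(G)$ contribute no generators at all, $N$ is generated by the powers $e_a^{k(\OO)}$ with $\OO\in\CC_{\fg}(G)$ (centerlessness plus passing to $\Ab(G)$ identify the order of the right translation $R_a$ with the exponent $k(\OO)$ of the power relation of $\Pi$), and this subgroup is precisely the kernel of $G^{\Deh}\twoheadrightarrow G$ from Theorem \ref{T:DehnAsIntermediate}; hence $\oAs(\Gamma^G)\cong G$.
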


We give short proofs of these results with our methods.

Note that the isomorphism $G \cong \As(Q)$ or $\oAs(Q)$ yields a new presentation of $G$ with \cc-type relations for \textbf{all} instead of \textbf{some} pairs of generators only.

In \cite{SzymikPowers} the theorem was extended to \emph{pq-generated groups} $G$, which allow conjugation and generalised power relations ($a^k=b^m,\ k,m\in \Z$) in their presentation, and to appropriate quotients of the structure group $\As(\Conj(G))$. Those quotients were studied in detail there. 

A quandle $Q$ satisfying the theorem is constructed explicitly, starting from a \cb-type presentation $\Pi$ of $G$. Indeed, one takes the union $\Gamma^G$ of $G$-conjugacy classes of the generator set $\Gamma$ of $\Pi$. This is reminiscent of dual braid monoids \cite{BKLDual,BessisDual}, generated by all ``positive'' conjugates of the classical generators of braid groups. The quandle $\Gamma^G$ is a particular case of \emph{Dehn quandles} from \cite{SinghEtAlDehnQuandles}. When $G$ is Thompson's group $F$, we can reformulate \cite[Prop. 6.4]{SzymikThompson} to say that Thompson's quandle, described by generators and relations in \cite{SzymikThompson}, is isomorphic to our $\Gamma^F$.

For our favourite groups $S_n$ with their classical presentation, $\Gamma^{S_n}$  is the set $\Transp_n$ of all transpositions from $S_n$, endowed with the conjugation operation. We thus recover the following isomorphism from \cite{GarIglVen}:
\[S_n \ \cong  \ \oAs(\Transp_n).\] 

Our next result shows that $A(G)$ is a trivial extension of the structure group $G^{\Deh}:=\As(\Gamma^G)$ of the much smaller quandle $\Gamma^G$:
\begin{letterthm}\label{T:DirectProduct}
For any \cb-group $G$, one has a group isomorphism
\[A(G) \ \cong \ G^{\Deh} \times \Z^{\CC_{\ng}(G)},\]
where $\CC_{\ng}(G)$ is the set of conjugacy classes of $G$ containing no generators from $\Pi$. 
\end{letterthm}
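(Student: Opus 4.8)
The plan is to locate inside $A(G)$ a normal copy of $G^{\Deh}$ together with a central free-abelian complement, and to match the two with the factors of the asserted product. Write $\CC_{\gg}(G)$ for the conjugacy classes meeting the generating set $\Gamma$, so that $\CC(G)=\CC_{\gg}(G)\sqcup\CC_{\ng}(G)$ and $\Gamma^G=\bigsqcup_{\OO\in\CC_{\gg}(G)}\OO$, and let $H\le A(G)$ be the subgroup generated by the $e_a$ with $a\in\Gamma^G$.

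First I would check that $H$ is normal and compute the quotient. Normality is immediate from the defining relations: for $a\in\Gamma^G$ and any $c\in G$ one has $e_c^{-1}e_ae_c=e_{c^{-1}ac}$ and $e_ce_ae_c^{-1}=e_{cac^{-1}}$, and $c^{-1}ac,\,cac^{-1}$ lie again in $\Gamma^G$ because $\Gamma^G$ is a union of conjugacy classes. In the quotient $A(G)/H$ every $e_b$ with $b\in\Gamma^G$ becomes trivial, so the relation $e_ae_b=e_be_{b^{-1}ab}$ forces all $\bar e_a$ with $a$ in a fixed non-generator class $\OO$ to coincide (conjugating by generators connects $\OO$), and forces the resulting elements $\bar t_\OO$ to commute. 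Hence $A(G)/H$ is abelian, generated by $\{\bar t_\OO\}_{\OO\in\CC_{\ng}(G)}$. Since $\Ab$ maps $H$ into the generator coordinates $\Z^{\CC_{\gg}(G)}$, the composite $\rho\colon A(G)\xrightarrow{\Ab}\Z^{\CC(G)}\twoheadrightarrow\Z^{\CC_{\ng}(G)}$ kills $H$, and the induced map on $A(G)/H$ inverts the surjection $\mathbf 1_\OO\mapsto\bar t_\OO$; hence $A(G)/H\cong\Z^{\CC_{\ng}(G)}$ and $\ker\rho=H$, giving a short exact sequence $1\to H\to A(G)\xrightarrow{\rho}\Z^{\CC_{\ng}(G)}\to 1$.

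Next I would split this sequence centrally. For each $\OO\in\CC_{\ng}(G)$ fix a representative $a_\OO$ and a word $a_\OO=\gamma_1^{\epsilon_1}\cdots\gamma_r^{\epsilon_r}$ in the generators, and set $\hat t_\OO:=e_{a_\OO}\,(e_{\gamma_1}^{\epsilon_1}\cdots e_{\gamma_r}^{\epsilon_r})^{-1}$. Then $\pi(\hat t_\OO)=a_\OO a_\OO^{-1}=1$, so $\hat t_\OO\in\Ker(\pi)$, which is central in $A(G)$ by Theorem~\ref{T:Inj} (the corollary that $A(G)$ is a central extension of $G$). The correction term lies in $H=\ker\rho$, so $\rho(\hat t_\OO)=\rho(e_{a_\OO})=\mathbf 1_\OO$. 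Consequently $N:=\langle\hat t_\OO:\OO\in\CC_{\ng}(G)\rangle$ is central and $\rho$ restricts to an isomorphism $N\xrightarrow{\sim}\Z^{\CC_{\ng}(G)}$; thus $N\cong\Z^{\CC_{\ng}(G)}$, $N\cap H=\ker(\rho|_N)=1$, and every $g\in A(G)$ differs from a unique element of $N$ by an element of $\ker\rho=H$. As $N$ is central it centralizes $H$, so $A(G)=H\times N$.

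It then remains to identify $H$ with $G^{\Deh}$. The quandle inclusion $\Gamma^G\hookrightarrow\Conj(G)$ induces a surjection $j\colon G^{\Deh}=\As(\Gamma^G)\to H$, and composing with $\pi\times\Ab$ recovers $\pi^{\Deh}\times\Ab^{\Deh}\colon G^{\Deh}\to G\times\Z^{\CC_{\gg}(G)}$ followed by $\Z^{\CC_{\gg}(G)}\hookrightarrow\Z^{\CC(G)}$; hence $j$ is injective as soon as $\pi^{\Deh}\times\Ab^{\Deh}$ is. This is the step I expect to be the main obstacle: one must verify that the injectivity argument behind Theorem~\ref{T:Inj}, equivalently the pullback description of Theorem~\ref{T:Pullback}, applies verbatim to the structure group of the union-of-conjugacy-classes quandle $\Gamma^G$ and not only to $\Conj(G)$ itself. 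Granting this, $j$ is an isomorphism $G^{\Deh}\xrightarrow{\sim}H$, and together with $A(G)=H\times N\cong H\times\Z^{\CC_{\ng}(G)}$ we conclude $A(G)\cong G^{\Deh}\times\Z^{\CC_{\ng}(G)}$.
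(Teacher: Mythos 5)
Your proposal is correct and is essentially the paper's own proof (Theorem \ref{T:DirectProductDetailed}): it uses the same central elements (your $\hat t_\OO$ are exactly the paper's $t_\OO$ from \eqref{E:Tng}, whose freeness is Proposition \ref{P:ManyCentralElements}), the same identification of $G^{\Deh}$ with the subgroup of $A(G)$ generated by the $e_a$ with $a\in\Gamma^G$, and the same key input --- the injectivity of $\pi\times\Abf$ on $\As(\Gamma^G)$, which is precisely the paper's Theorem \ref{T:InjForDehn} and is justified there exactly as you anticipate, by observing that the proofs of Theorems \ref{T:Inj} and \ref{T:Pullback} apply verbatim to $\Gamma^G$. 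The only difference is bookkeeping: the paper defines $\phi\colon G^{\Deh}\times\Z^{\CC_{\ng}(G)}\to A(G)$ on generators and checks well-definedness, surjectivity, and injectivity directly, whereas you first build the internal splitting $A(G)=H\times N$ from the split sequence $1\to H\to A(G)\overset{\rho}{\to}\Z^{\CC_{\ng}(G)}\to 1$ and identify $H\cong G^{\Deh}$ afterwards.
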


We propose several alternative descriptions of $G^{\Deh}$, adapted to different purposes.

First, it is an intermediate object between $G$ and the \emph{Artin-type lift} $G^{\Art}$ obtained from a presentation $\Pi$ of $G$ by forgetting all power relations:
\begin{letterthm}\label{T:DehnAsIntermediate}
For a group $G$ with a \cb-type presentation $\Pi$, one has a sequence 
\[ G^{\Art} \ \twoheadrightarrow \ G^{\Deh} \ \twoheadrightarrow \ G\]
of group surjections, where the first map imposes the element $a^k$ to be central whenever $\Pi$ contains a relation $a^k=1$; and the second map collapses all these central elements, presenting $G^{\Deh}$ as a central extension of $G$:
\begin{equation}\label{E:GDvsG}
0 \ \to \ \Z^{\CC_{\fg}(G)} \ \to \ G^{\Deh} \ \to \ G \ \to \ 0. 
\end{equation}
Here $\CC_{\fg}(G)$ is the set of all conjugacy classes of $G$ containing finite order generators. 
\end{letterthm}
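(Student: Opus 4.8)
The plan is to build the two surjections directly out of the quandle $\Gamma^G$, compute the second kernel by a quandle-action argument combined with a presentation comparison, and then pin down the first kernel by comparing two central extensions of $G$.

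First I would record the basic structure of $G^{\Deh}=\As(\Gamma^G)$. Since $\Gamma$ generates $G$, iterating the quandle operation $x\op y=y^{-1}xy$ and its inverse shows that $\Gamma^G$ is generated \emph{as a quandle} by $\Gamma$; hence, via $e_{x\op y}=e_y^{-1}e_xe_y$, the group $G^{\Deh}$ is generated by the elements $e_\gamma$, $\gamma\in\Gamma$, and every conjugation relation $b^{-1}ab=c$ of $\Pi$ lifts to $e_b^{-1}e_ae_b=e_c$ (a rearrangement of the defining relation $e_ae_b=e_be_{a\op b}$). Thus $\gamma\mapsto e_\gamma$ gives a surjection $\phi_1\colon G^{\Art}\twoheadrightarrow G^{\Deh}$, and $e_\gamma\mapsto\gamma$ gives a surjection $\pi_D\colon G^{\Deh}\twoheadrightarrow G$; their composite is the canonical projection $G^{\Art}\twoheadrightarrow G$ adding back the power relations, whose kernel is the normal closure of the power relators $\gamma_i^{k_i}$.

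Next I would treat $\pi_D$. For each power relation $\gamma_i^{k_i}=1$ of $\Pi$ set $z_i:=e_{\gamma_i}^{k_i}$. The crucial computation is that each $z_i$ is central: iterating $e_{\gamma_i}^{-1}e_ae_{\gamma_i}=e_{a\op\gamma_i}$ yields $e_{\gamma_i}^{-k_i}e_ae_{\gamma_i}^{k_i}=e_{\gamma_i^{-k_i}a\gamma_i^{k_i}}=e_a$ for every $a\in\Gamma^G$, because $\gamma_i^{k_i}=1$ in $G$. Abelianising $G^{\Deh}$, whose orbits are exactly the conjugacy classes of $G$ meeting $\Gamma$, sends $z_i$ to $k_i$ times the basis vector of its class, so the central elements $z_i$ are $\Z$-independent and span a free abelian group $\cong\Z^{\CC_{\fg}(G)}$. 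To see they exhaust $\Ker(\pi_D)$, I would note that $G^{\Deh}/\langle z_i\rangle$ is generated by the $e_\gamma$ subject to the conjugation relations and to $e_{\gamma_i}^{k_i}=1$, hence to all relations of $\Pi$, so it is a quotient of $G$; composing this quotient with $\pi_D$ is the identity on generators, forcing $G^{\Deh}/\langle z_i\rangle\cong G$. Therefore $\Ker(\pi_D)=\langle z_i\rangle\cong\Z^{\CC_{\fg}(G)}$ is central, which is exactly the extension \eqref{E:GDvsG}.

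Finally I would identify $\Ker(\phi_1)$. Centrality of the $z_i$ forces $\phi_1$ to kill the normal subgroup $N$ generated by the commutators $[\gamma_i^{k_i},\delta]$, $\delta\in\Gamma$ — the relation ``$\gamma_i^{k_i}$ is central'' from the statement — so it remains to prove $N=\Ker(\phi_1)$. Put $H:=G^{\Art}/N$. In $H$ the elements $\bar\gamma_i^{k_i}$ are central and generate an abelian subgroup $Z_H$, a quotient of $\Z^{\CC_{\fg}(G)}$; since $H/Z_H\cong G$, the group $H$ is a central extension of $G$, and the induced surjection $H\twoheadrightarrow G^{\Deh}$ over $G$ restricts to a surjection $Z_H\twoheadrightarrow\langle z_i\rangle\cong\Z^{\CC_{\fg}(G)}$. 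As a surjective endomorphism of a finitely generated free abelian group is an isomorphism, $Z_H\cong\Z^{\CC_{\fg}(G)}$ and $Z_H\to\langle z_i\rangle$ is an isomorphism; since $\Ker(H\twoheadrightarrow G^{\Deh})\subseteq Z_H$ then meets $Z_H$ trivially, it vanishes, giving $H\cong G^{\Deh}$ and hence $\Ker(\phi_1)=N$. I expect the main obstacle to be precisely these two ``no further relations'' steps, as the surjections are formal; both are resolved by the same device of comparing central extensions of $G$ and using Hopficity of $\Z^{\CC_{\fg}(G)}$, the genuinely computational input being the quandle-action identity $e_{\gamma_i}^{-k_i}e_ae_{\gamma_i}^{k_i}=e_a$ that turns each power relation into an infinite-order central element of $G^{\Deh}$.
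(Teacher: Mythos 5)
Your proof is correct in substance, but it follows a genuinely different route from the paper's. The paper obtains this theorem as an immediate consequence of two prior results: Lemma~\ref{L:CentralPowers} (centrality and freeness of the powers $e_a^k$), which you reprove via the computation $e_{\gamma_i}^{-k_i}e_ae_{\gamma_i}^{k_i}=e_a$ and the abelianisation $\Ab(G^{\Deh})\cong\Z^{\CC_{\gg}(G)}$, and Theorem~\ref{T:DehnGenRel}, which identifies $G^{\Deh}$ with the presentation $\Pi^{\Deh}$ (conjugation relations plus centrality of the $e_a^k$) and is proved there by the word-rewriting strategy of Theorem~\ref{T:Inj}: a word representing $1$ in $G^{\Deh}$ is transformed, by imitating a $\Pi$-derivation inside $\Pi^{\Deh}$, into a product of central powers whose exponents are then killed by $\Abf$. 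You avoid rewriting entirely and invert the logical order: your retraction trick (all relations of $\Pi$ hold in $G^{\Deh}/\langle z_i\rangle$, so $G$ surjects onto it, splitting the induced map to $G$ and forcing both to be isomorphisms) pins down $\Ker(\pi_D)$ and hence the exact sequence \eqref{E:GDvsG} \emph{without} knowing any presentation of $G^{\Deh}$; your comparison of the two central extensions $H=G^{\Art}/N$ and $G^{\Deh}$ of $G$ then recovers the content of Theorem~\ref{T:DehnGenRel} as a corollary rather than using it as an input. What the paper's rewriting buys is a single template reused for Theorems~\ref{T:Inj} and \ref{T:InjForDehn}; what your argument buys is a shorter, self-contained, purely universal-property proof of this particular statement, independent of those injectivity theorems.

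One step needs repair. You invoke the fact that a surjective endomorphism of a \emph{finitely generated} free abelian group is an isomorphism, but nothing in the theorem bounds the size of $\Gamma$ or the number of power relations: $\CC_{\fg}(G)$ may be infinite, and free abelian groups of infinite rank are not Hopfian (a shift map on $\Z^{(\N)}$ is surjective with nontrivial kernel). Fortunately your situation is better than generic surjectivity: the composite $\Z^{\CC_{\fg}(G)}\twoheadrightarrow Z_H\twoheadrightarrow\langle z_i\rangle$ sends the canonical basis vector indexed by the class of $\gamma_i$ to $z_i$, and you have already shown that the $z_i$ form a free basis of $\langle z_i\rangle$; so this composite is an isomorphism outright, whence $\Z^{\CC_{\fg}(G)}\to Z_H$ is injective as well as surjective, and $Z_H\to\langle z_i\rangle$ is an isomorphism --- which is all your extension comparison needs. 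With that substitution (and no appeal to Hopficity), the proof is complete in full generality.
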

In particular,
\begin{itemize}
\item for a \cc-group $G$, the three groups above coincide, and we recover the isomorphism $A(G) \cong G \times \Z^{\CC_{\ng}(G)}$ from \cite{Ryder}, where it was conjectured to characterise \cc-groups;
\item for Coxeter groups, these surjections were studied in \cite{AkitaCoxeter} (with the notations $W=G,\ A_W=G^{\Art},\ Q_W= \Gamma^G$), generalising the $S_n$ case from \cite{NicholsSimpleRacks}.
\end{itemize}    

Second, we inject $G^{\Deh}$ into $G \times \Z^{\CC_{\fg}(G)}$ in Theorem \ref{T:InjForDehn}, and interpret this as a pullback in Proposition \ref{P:PullbackForDehn}.

Finally, when all generators of $G$ are from the same conjugacy class, Proposition \ref{P:SemiDirect} presents $G^{\Deh}$ as a semi-direct product
\[G^{\Deh} \ \cong \ G' \rtimes \Z.\]

For $S_n$, one has
\[ S_n^{\Art} \cong B_n \ \twoheadrightarrow \ S_n^{\Deh} \cong  \raisebox{.2em}{$B_n \times \langle t \rangle$}\left/\raisebox{-.2em}{$\sigma_1^2=t$}\right.  \ \twoheadrightarrow \ S_n \cong \raisebox{.2em}{$B_n$}\left/\raisebox{-.2em}{$\sigma_1^2=1$}\right.,\]
 and Theorem \ref{T:DirectProduct} yields 
\[A(S_n) \ \cong \ S_n^{\Deh} \times \Z^{P(n)-1}.\]
Further, $S_n^{\Deh}$ is a central extension of $S_n$, 
\[S_n^{\Deh} \ \cong\ S_n \underset{\varphi}{\times} \Z,\] 
for the $2$-cocycle
\begin{align*}
\varphi \colon \hspace*{1cm} S_n^2 &\to \Z,\\
(\alpha,\beta) &\mapsto (-\lg(\alpha\beta)+\lg(\alpha)+\lg(\beta))/2,
\end{align*} 
where $\lg$ is the minimal number of transpositions needed to write a permutation. Finally, we recover the following description from \cite{Eisermann, AkitaCoxeter}:
\[ S_n^{\Deh} \ \cong \ A_n \rtimes \Z,\]
where $1 \in \Z$ acts on $A_n$ by conjugation by, say, the generator $\sigma_1$.

\section{Structure groups as subgroups}\label{Sec:Subgroup}

The aim of this section is to prove Theorem~\ref{T:Inj}, that is, to present the structure group $A(G)$ as a subgroup of the much more manageable direct product $G \times \Z^{\CC(G)}$.

Fix a group $G$ with a \cb-type presentation $\Pi$. Denote its set of generators by $\Gamma$. We will need several notations, relevant for all subsequent sections. 

First, let us split the set of the conjugacy classes of $G$ as 
\[\CC(G) = \CC_{\gg}(G) \sqcup \CC_{\ng}(G), \hspace*{1.5cm} \CC_{\gg}(G) = \CC_{\fg}(G) \sqcup \CC_{\ig}(G),\]
 where the classes from $\CC_{\gg}(G)$ contain some \underline{g}enerator $a \in \Gamma$, while those from $\CC_{\ng}(G)$ do \underline{n}ot; and in a class $\OO \in \CC_{\fg}(G)$ all elements are of \underline{f}inite order, denoted by $k(\OO)$, while in $\CC_{\ig}(G)$ everyone is of \underline{i}nfinite order. Recall that we allow only one power relation per conjugacy class, so $k(\OO)$ is exactly the exponent in the only power relation $a^k=1$ with $a \in \OO$.
 
From the presentation $\Pi$, one directly deduces the abelianisation of $G$:
\begin{equation}\label{E:AbG}
\Ab(G) \ \cong \ \Z^{\CC_{\ig}(G)} \oplus \Gf, \hspace*{1cm} \text{ where } \ \Gf :=\  \bigoplus_{\OO \in \CC_{\fg}(G)} \Z_{k(\OO)}.
\end{equation}

Further notations are needed to make the abelianisation map for the structure group, $\Ab \colon A(G) \twoheadrightarrow \Z^{\CC(G)}$, more explicit. Denote by $c_{\OO}$ the generator of $\Z^{\CC(G)}$ corresponding to $\OO \in \CC(G)$; thus $\Ab(e_a)=c_{\OO}$ for $a \in \OO$. Also, denote by $\varepsilon_{\OO} \colon A(G) \twoheadrightarrow \Z$ the composition of $\Ab$ with the projection onto the corresponding factor of $\Z^{\CC(G)}$; thus $\varepsilon_{\OO}(e_a)=1$ if $a \in \OO$, and $0$ if $a \notin \OO$. Observe that $\Ab = \bigoplus_{\OO \in \CC(G)} \varepsilon_{\OO}$, and the degree map is just the sum $\varepsilon= \sum_{\OO \in \CC(G)} \varepsilon_{\OO}$.

The group $A(G)$ has a very big center. This will be discussed in more detail later; now we will need only central elements of the form $e_a^k$. To study those, let us recall a very general lemma, rediscovered by a remarkable number of authors:
\begin{lmm}\label{L:CentralPowers}
If a power $a^k$ is central in a (not necessarily \cb-) group $G$, then the power $e_a^k$ is central in $A(G)$, and one has $e_a^k = e_b^k$ for any $b$ from the same conjugacy class. Taking at most one such power per conjugacy class, one generates a central free abelian subgroup of $A(G)$.  
\end{lmm}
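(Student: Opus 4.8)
The plan is to prove the three assertions of Lemma~\ref{L:CentralPowers} in order: first that $e_a^k$ is central in $A(G)$ whenever $a^k$ is central in $G$; second that $e_a^k = e_b^k$ for conjugate $a,b$; and third that a family of such powers, one per conjugacy class, generates a central \emph{free} abelian subgroup. For the first claim, I would work directly from the defining relations~\eqref{Eq:StrGroups}. The relation $e_a e_b = e_b e_{a\op b}$ can be read as a conjugation rule $e_b^{-1} e_a e_b = e_{a \op b}$, so conjugating by $e_b$ sends the generator $e_a$ to the generator indexed by $a \op b = b^{-1}ab$. Iterating, conjugation by $e_b$ acts on the index $a$ by the inner automorphism $\mathrm{ad}_b$ of $G$, and consequently $e_b^{-1} e_a^k e_b = e_{b^{-1}ab}^{\,k}$. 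To show $e_a^k$ is central it suffices to check it commutes with every generator $e_b$; equivalently, that $e_b^{-1} e_a^k e_b = e_a^k$ for all $b$. The point where the hypothesis enters is that I want to rewrite the conjugated power and use $a^k$ central to collapse it back.

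The cleanest route to the first two claims together is to use the degree map $\varepsilon$ and the homomorphism $\pi \colon A(G) \to G$ alongside the conjugation action on indices. A standard and robust argument: since conjugation by any $e_b$ permutes the generators according to the $\Inn(G)$-action on $Q = \Conj(G)$, the element $e_a^k$ is central as soon as $a^k$ lies in the center of $G$, because then the orbit of $a$ under the relevant inner automorphisms is ``closed'' at the level of $k$-th powers. Concretely I would establish $e_a^k = e_{g^{-1}ag}^{\,k}$ for every $g \in G$ by induction on word length in the generators of $g$ (each elementary conjugation step is exactly the relation above), which simultaneously gives the equality $e_a^k = e_b^k$ for conjugate $a,b$ (the second claim) and, combined with $b^{-1}ab$ ranging over the whole conjugacy class, forces $e_b^{-1}e_a^k e_b = e_a^k$ for all $b$, i.e. centrality (the first claim). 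Here the centrality of $a^k$ in $G$ is what guarantees the inductive identity is consistent, since it means the power $a^k$ is fixed by all inner automorphisms.

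For the third claim, I would pass to the abelianisation $\Ab \colon A(G) \twoheadrightarrow \Z^{\CC(G)}$. Choosing at most one power $e_{a_{\OO}}^{k_{\OO}}$ per conjugacy class $\OO$, their images under $\Ab$ are $k_{\OO}\, c_{\OO}$, which are non-zero multiples of the distinct basis vectors $c_{\OO}$ of $\Z^{\CC(G)}$ and hence $\Z$-linearly independent. Therefore any non-trivial integer combination of the $e_{a_{\OO}}^{k_{\OO}}$ has non-trivial image in the free abelian group $\Z^{\CC(G)}$, so it cannot be the identity; this shows the subgroup they generate is free abelian of the expected rank, and it is central because each generator is. Note this part needs no \cb-hypothesis, only the general relations.

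The main obstacle, and the only step requiring genuine care, is the inductive identity $e_b^{-1} e_a^k e_b = e_{b^{-1}ab}^{\,k}$ and its use to deduce centrality: one must verify that conjugation by a generator $e_b$ really does act on the index set $Q$ exactly as the right translation $R_b$ (so that the bookkeeping over a word for a general $g \in G$ is consistent), and then check that the hypothesis ``$a^k$ central'' is precisely what makes the resulting index, after running around the conjugacy class, return to $a$ at the level of $k$-th powers. Everything else (the free abelian conclusion via $\Ab$, the equality of powers across a class) follows formally once this conjugation-action statement is nailed down.
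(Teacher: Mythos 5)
Your third claim (freeness of the subgroup via the abelianisation: the images $\Ab(e_a^k)=k\,c_{\OO}$ are independent in $\Z^{\CC(G)}$) is correct and is exactly the paper's argument, and your reduction of centrality to the identity $e_{b^{-1}ab}^{\,k}=e_a^k$ is also fine. But the argument you propose for that identity is circular, and this is a genuine gap. The elementary step of your induction is the conjugation rule itself: it gives $e_{(gb)^{-1}a(gb)}^{\,k}=e_b^{-1}e_{g^{-1}ag}^{\,k}e_b$, i.e.\ the new power is a \emph{conjugate} of the old one, and upgrading ``conjugate'' to ``equal'' requires $e_a^k$ to commute with $e_b$ --- precisely the centrality you are trying to prove. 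Your fallback, that the hypothesis makes the index ``return to $a$ at the level of $k$-th powers'' (that is, $(b^{-1}ab)^k=a^k$ holds in $G$), does not transfer to $A(G)$: the section $a\mapsto e_a$ is not a homomorphism, so $e_{b^{-1}ab}^{\,k}$ and $e_{(b^{-1}ab)^k}$ are different elements (they do not even have the same degree), and an identity among $k$-th powers in $G$ says nothing directly about the elements $e_{b^{-1}ab}^{\,k}$ and $e_a^k$ of $A(G)$.

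The missing idea is a directional switch, and it is the whole content of the paper's proof: instead of conjugating the power $e_a^k$ by generators, conjugate the generators by the power. For every $c\in G$, the same conjugation rule you established gives
\[
e_a^{-k}\, e_c\, e_a^{k} \ = \ e_{c \op (a^k)} \ = \ e_c,
\]
where the last equality holds because $a^k\in Z(G)$ forces $c\op(a^k)=c$. So the hypothesis is used on the \emph{index of the conjugated generator}, not on the conjugating power, and it shows at once that $e_a^k$ commutes with every generator, hence is central. With centrality in hand, your own identity immediately yields $e_a^k=e_b^{-1}e_a^k e_b=e_{a\op b}^{\,k}$ for all $b$, which is the second claim, and the abelianisation argument concludes the third exactly as you wrote it.
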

Note that such central powers exist in the structure group of any finite quandle $Q$, and are used to construct the finite quotients $\oAs(Q)$.
\begin{proof}
The centrality statement follows from the computation
\[e_c \op (e_a^k) = e_{c \op (a^k)} =a_c\]
for all $c \in G$. Here and below the symbol $\op$ in a group always stands for the conjugation operation. This implies 
\[ e_a^k = e_a^k \op e_c = (e_a \op e_c)^k=e_{a \op c}^k, \]
hence $e_a^k = e_b^k$ for any $b$ from the same conjugacy class as $a$. Finally, the freeness follows from the freeness of the image in the abelianisation, since 
\[\Ab(e_a^k)=kc_{\OO} \in \bigoplus_{\OO \in \CC(G)} \Z c_{\OO}=\Z^{\CC(G)}. \qedhere\]
\end{proof}

For a \cb-group $G$, to any $\OO \in \CC_{\fg}$ one then associates the central element
\begin{equation}\label{E:T}
t_{\OO} = e_{a}^{k(\OO)}, \ a \in \OO,
\end{equation}
which is independent of $a \in \OO$. Indeed, the element $a^{k(\OO)}=1$ is central in $G$.

\begin{xmp}\label{Ex:SnPresentation}
To help the reader better digest these notations, we write them out explicitly for the symmetric group $S_n$ with its classical presentation, enhanced by extra generators to get to the \cb-type:
\begin{equation}\label{E:PresentationSn}
S_n = \Biggl\langle      
            \begin{array}{ll|l}
            & &  \sigma_i \op \sigma_j = \sigma_i,\ i<j-1,\\
            \sigma_i,& 1 \leq i < n, & \sigma_i \op \sigma_{i+1} =\sigma_{i,i+1}, \\
            \sigma_{i,i+1},& 1 \leq i < n-1, & \sigma_{i,i+1} \op \sigma_{i} = \sigma_{i+1},\\
            & &  \sigma_i^2=1
        \end{array}
     \Biggr\rangle.
\end{equation}
The set of conjugacy classes $\CC(S_n)$ is in bijection with the set of partitions of $n$, which encode the cycle decomposition of permutations. All generators fall into the class $\OO_{2,1,\ldots,1}$, with $k(\OO_{2,1,\ldots,1})=2$. The central element $t_{\OO_{2,1,\ldots,1}}$ can be written as $e_{\sigma_i}^2$ for any $i$. Thus $\#\CC(S_n)=P(n)$ (the partition number of $n$), $\CC_{\ig}(S_n)$ is empty, $\#\CC_{\fg}(S_n)=1$, and $(S_n)_{\ff} = \Z_2$. 
\end{xmp}

\begin{xmp}\label{Ex:BnPresentation}
For the closely related braid groups $B_n$, a \cc-type presentation is obtained by removing the power relations $\sigma_i^2=1$ from the above presentation for $S_n$. The set $\CC(B_n)$ is now infinite, $\CC_{\fg}(B_n)$ is empty, and $\#\CC_{\ig}(B_n)=1$.  
\end{xmp}

\begin{proof}[Proof of Theorem~\ref{T:Inj}]
Take an element $g\in A(G)$ satisfying $\pi(g)=1_G$ and $\varepsilon_{\OO}(g)=0$ for all $\OO \in \CC(G)$. We need to explain why $g$ is trivial in $A(G)$. Our argument is based on rewriting.

Fix a representative $a_{\OO} \in \OO$ for any $\OO \in \CC(G)$; if $\OO \in \CC_{\gg}(G)$, we require $a_{\OO} \in \Gamma$. Any other $b$ from the same conjugacy class can be written as $b=a_{\OO} \op a$ for some $a \in G$. Express $a$ in terms of the generators of $\Pi$: $a=a_1^{\alpha_1}\cdots a_s^{\alpha_s}$, with $a_i \in \Gamma,\ \alpha_i = \pm 1$. Then 
\[b=a_{\OO} \op(a_1^{\alpha_1}\cdots a_s^{\alpha_s}) = (\cdots(a_{\OO} \op^{\alpha_1} a_1) \cdots \op^{\alpha_s} a_s,\]
where $\op^{+1}=\op$, and the operation $\op^{-1}$ is defined by $a \op b = c \Leftrightarrow c \op^{-1} b = a$. Then the defining relation~\eqref{Eq:StrGroups} of the structure group yields
\begin{equation}\label{E:ToRepresentatives}
e_b=(\cdots(e_{a_{\OO}} \op^{\alpha_1} e_{a_1}) \cdots \op^{\alpha_s} e_{a_s}.
\end{equation}

Now, express our $g\in A(G)$ in terms of the generators $e_a$ of $A(G)$.  Using \eqref{Eq:StrGroups}, these generators can be rearranged according to their conjugacy classes, which we order in an arbitrary way (the indices $a$ in $e_a$ might change along the way, but not their conjugacy class). Apply procedure \eqref{E:ToRepresentatives} to all the $e_a$ with $a \in \sqcup_{\OO \in \CC_{\ng}(G)} \OO$. This presents $g$ in terms of the $e_{a_{\OO}},\ \OO \in \CC_{\ng}(G)$, and the $e_a,\ a \in \sqcup_{\OO \in \CC_{\gg}(G)} \OO$. Using \eqref{Eq:StrGroups}, the latter can be pushed through the $e_{a_{\OO}}$ to the right of the expression, receiving $\op$-actions on the way. Now, apply procedure \eqref{E:ToRepresentatives} once again, this time to all the $e_a$ with $a \in \sqcup_{\OO \in \CC_{\gg}(G)} \OO$. The result can be re-organised as 
\begin{equation}\label{E:ToGenerators}
g=e_{a_{\OO_1}}^{d_1} e_{a_{\OO_2}}^{d_2} \cdots e_{b_1}^{\beta_1}\cdots e_{b_t}^{\beta_t}, \hspace*{1cm} \OO_i \in \CC_{\ng}(G),\ d_i \in \Z, \ b_j \in \Gamma, \ \beta_j = \pm 1,
\end{equation}
where only a finite number of the $d_i$'s are non-zero. 

On the one hand, for any $\OO_i \in \CC_{\ng}(G)$, one has $\varepsilon_{\OO_i}(g)=0$. On the other hand, \eqref{E:ToGenerators} yields $\varepsilon_{\OO_i}(g)=d_i$. Hence all the $d_i$ vanish. Further, since $1_G=\pi(g)= b_1^{\beta_1}\cdots b_t^{\beta_t}$, the word $b_1^{\beta_1}\cdots b_t^{\beta_t}$ can be made trivial in $G$ using only relations from the presentation $\Pi$. But this procedure can be imitated in $A(G)$ for the element $g$, since a relation $a \op b = c$ from $\Pi$ has an analogue $e_a \op e_b = a_{a \op b} = e_c$ in $A(G)$, and a relation $a^k=1$  has an analogue $e_a^k = t_{\OO}$, where $a \in \OO \in \CC_{\fg}(G)$, and $t_{\OO}$ is defined in~\eqref{E:T}. As the $t_{\OO}$'s are central, one can keep them all at the beginning of the word, and the rewriting procedure eventually leaves us with an expression
\[g=t_{\OO_1}^{p_1} t_{\OO_2}^{p_2} \cdots, \hspace*{2cm} \OO_i \in \CC_{\fg}(G),\ p_i \in \Z.\]
As usual, only a finite number of the $p_i$'s are non-zero. Now, for any $\OO_i$, which is from $\CC_{\fg}(G)$ this time, one has $0=\varepsilon_{\OO_i}(g)=p_i k(\OO_i)$. Hence all the $p_i$'s vanish, and $g=1_{A(G)}$ as desired.
\end{proof}

\begin{rmr}\label{R:BetterInclusion}
We actually proved a stronger (but less elegant) group inclusion
\[A(G) \ \hookrightarrow \ G \times \Z^{\CC_{\fg}(G)} \times \Z^{\CC_{\ng}(G)},\]
obtained from the one in the theorem by ignoring the $\Z^{\CC_{\ig}(G)}$ part of $\Z^{\CC(G)}$.  
\end{rmr}

The theorem allows us to reduce many computations in $A(G)$ to $G$, especially those related to commutativity. The most straightforward example is

\begin{crl}\label{C:Comm}
Let $G$ be a \cb-group. Then elements $f$ and $g$ commute in $A(G)$ if and only if $\pi(f)$ and $\pi(g)$ commute in $G$.
\end{crl}

\begin{proof}
Denoting the commutator as 
\[ [f,g]=fgf^{-1}g^{-1},\] 
one has 
\[(\pi \times \Ab)([f,g]) \ = \ ([\pi(f),\pi(g)],0) \in G \times \Z^{\CC(G)},\]
so, by the injectivity of $\pi \times \Ab$, we get $ [f,g]=1_{A(G)} \ \Leftrightarrow \ [\pi(f),\pi(g)]=1_G$.
\end{proof}

Similarly, one proves
\begin{crl}\label{C:Center}
Let $G$ be a \cb-group. Then the center of $A(G)$ can be computed as 
\[Z(A(G)) \ = \ \pi^{-1}(Z(G)).\]
In particular, for a centerless group, one has $Z(A(G)) = \Ker(\pi)$.
\end{crl}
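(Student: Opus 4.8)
The plan is to deduce the center directly from the commutation criterion of Corollary~\ref{C:Comm}, which already translates commutativity in $A(G)$ into commutativity in $G$. Recall that $g \in A(G)$ lies in the center precisely when it commutes with every element of $A(G)$. The only additional input I would record is the surjectivity of $\pi$: the generators $e_a$ of $A(G)$ map to the generators $a$ of $G$, so $\pi(A(G)) = G$.

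Next I would fix $g \in A(G)$ and apply Corollary~\ref{C:Comm}: $g$ commutes with a given $f \in A(G)$ if and only if $\pi(g)$ commutes with $\pi(f)$ in $G$. Hence $g \in Z(A(G))$ if and only if $\pi(g)$ commutes with $\pi(f)$ for \emph{all} $f \in A(G)$. By the surjectivity just noted, as $f$ ranges over $A(G)$ its image $\pi(f)$ ranges over all of $G$, so this condition is equivalent to $\pi(g) \in Z(G)$, i.e.\ to $g \in \pi^{-1}(Z(G))$. This yields the claimed equality $Z(A(G)) = \pi^{-1}(Z(G))$.

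Finally, for a centerless $G$ one has $Z(G) = \{1_G\}$, whence $\pi^{-1}(Z(G)) = \pi^{-1}(1_G) = \Ker(\pi)$, giving the special case. The argument is essentially immediate once Corollary~\ref{C:Comm} is available; the only point requiring (minimal) care is the quantifier exchange inside the definition of the center, and this relies exactly on the surjectivity of $\pi$, so I expect no genuine obstacle here.
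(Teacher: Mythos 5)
Your proof is correct and matches the paper's intended argument: the paper proves Corollary~\ref{C:Comm} via the injectivity of $\pi \times \Ab$ and then states that Corollary~\ref{C:Center} follows ``similarly,'' which is exactly your route of combining the commutation criterion with the surjectivity of $\pi$ to exchange the quantifier over $A(G)$ for one over $G$. No gaps here.
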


\begin{crl}\label{C:Derived}
Let $G$ be a \cb-group. The map $\pi$ restricts to an isomorphism of the derived subgroups:
\[A(G)' \ \cong\ G'.\]
\end{crl}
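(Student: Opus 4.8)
The plan is to show that $\pi$ restricts to a surjection $A(G)' \twoheadrightarrow G'$ with trivial kernel, both facts following easily from Theorem \ref{T:Inj}. First I would establish surjectivity: since $\pi \colon A(G) \twoheadrightarrow G$ is surjective (the generators $e_a$ map onto the generating set of $G$), it carries commutators to commutators, so $\pi(A(G)') = G'$. This direction is routine and uses nothing beyond the definition of $\pi$.

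The substance is injectivity of the restricted map, and here I would exploit the injectivity of $\pi \times \Ab$ from Theorem \ref{T:Inj}. The key observation is that the abelianisation map $\Ab \colon A(G) \to \Z^{\CC(G)}$ kills every commutator, since its target is abelian; hence $A(G)' \subseteq \Ker(\Ab)$. Therefore on the subgroup $A(G)'$ the injective map $\pi \times \Ab$ reduces to $\pi$ alone: for $g \in A(G)'$ we have $(\pi \times \Ab)(g) = (\pi(g), 0)$, so $\pi(g) = 1_G$ forces $g = 1_{A(G)}$. Thus $\pi|_{A(G)'}$ is injective.

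Combining the two steps, $\pi$ restricts to an injective surjection $A(G)' \to G'$, which is the desired isomorphism. I do not expect a genuine obstacle here: the result is essentially a formal consequence of Theorem \ref{T:Inj} together with the elementary fact that a commutator subgroup lies in the kernel of any map to an abelian group. The only point requiring a moment's care is to verify that the codomain of $\pi|_{A(G)'}$ is exactly $G'$ rather than a larger subgroup, which is handled by the surjectivity argument above; one could alternatively package both directions at once by noting that $\pi \times \Ab$ restricts to an isomorphism of $A(G)'$ onto $G' \times \{0\}$.
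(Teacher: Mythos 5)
Your proof is correct and matches the paper's argument essentially step for step: injectivity comes from $A(G)' \subseteq \Ker(\Ab)$ combined with the injectivity of $\pi \times \Ab$ from Theorem~\ref{T:Inj}, and surjectivity from the fact that $\pi$ carries commutators onto commutators (the paper uses the specific preimages $[e_a,e_b] \mapsto [a,b]$, which is the same idea). No gaps; nothing further needed.
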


\begin{proof}
Recall that the derived group is generated by the commutators. Since $\pi([f,g]) = [\pi(f),\pi(g)]$, the map $\pi$ restricts to a map $A(G)' \to G'$. Since $\Ab([f,g]) = 0$, the theorem guarantees that this restriction is injective.  Since $\pi([e_a,e_b]) = [a,b]$ for all $a,b \in G$, it is also surjective.
\end{proof}

\begin{crl}\label{C:Tor}
Let $G$ be a \cb-group. The torsion of $A(G)$ is
\[\Tor(A(G)) \ = \ \Ker(\Ab) \cap \pi^{-1}(\Tor(G)),\] 
which is isomorphic to a subgroup of $\Tor(G)$. 
\end{crl}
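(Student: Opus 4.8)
The plan is to pull the torsion question back along the injection $\pi \times \Ab \colon A(G) \hookrightarrow G \times \Z^{\CC(G)}$ supplied by Theorem~\ref{T:Inj}. Since $\pi \times \Ab$ is an injective group morphism, an element $g \in A(G)$ has finite order if and only if its image $(\pi(g),\Ab(g))$ does. So the entire statement reduces to understanding torsion in the target direct product, which is transparent because its second factor is well understood.

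Indeed, $\Z^{\CC(G)}$ is free abelian, hence torsion-free, so a pair $(x,v)$ satisfies $(x,v)^n=(x^n,nv)=(1_G,0)$ for some $n \geq 1$ precisely when $v=0$ and $x \in \Tor(G)$: the relation $nv=0$ with $n \geq 1$ forces $v=0$, after which $x^n=1_G$ is exactly the condition that $x$ be torsion. Pulling this equivalence back through $\pi \times \Ab$, I get that $g$ is torsion if and only if $\Ab(g)=0$ and $\pi(g) \in \Tor(G)$, which is the announced identity
\[\Tor(A(G)) \ = \ \Ker(\Ab) \cap \pi^{-1}(\Tor(G)).\]
For the second assertion I would note that every torsion element already lies in $\Ker(\Ab)$, on which the $\Z^{\CC(G)}$-coordinate of $\pi \times \Ab$ vanishes identically; hence injectivity of $\pi \times \Ab$ degenerates to injectivity of $\pi$ alone on $\Ker(\Ab)$, the same mechanism exploited in Corollaries~\ref{C:Comm}--\ref{C:Derived}. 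Restricting this injective map to $\Tor(A(G)) \subseteq \Ker(\Ab)$ and observing that it lands in $\Tor(G)$ yields an embedding of $\Tor(A(G))$ into $\Tor(G)$, realising it as a subgroup of $\Tor(G)$. Specialising to finite $G$, where $\Tor(G)=G$ and every element of $\Ker(\Ab)$ is torsion, recovers $\Tor(A(G))=\Ker(\Ab)\cong G'$ of Corollary~\ref{C:Derived}.

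The only step carrying genuine content is the torsion-freeness observation forcing $\Ab(g)=0$; everything else is formal transport along the injection. The one point I would flag as requiring care is the phrase \emph{subgroup of} $\Tor(G)$: for a general non-abelian $G$ the set $\Tor(G)$ need not be closed under multiplication, so the honest content is that $\pi$ embeds the torsion elements of $A(G)$ into those of $G$ compatibly with the structure inherited from $A(G)$, this embedding being a genuine subgroup inclusion whenever $\Tor(G)$ itself is a subgroup.
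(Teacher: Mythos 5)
Your proof is correct and follows essentially the same route as the paper: injectivity of $\pi \times \Ab$ together with torsion-freeness of $\Z^{\CC(G)}$ forces $\Ab(g)=0$ and $\pi(g)\in\Tor(G)$, and a second use of injectivity (namely that $\pi$ is injective on $\Ker(\Ab)$) embeds $\Tor(A(G))$ into $\Tor(G)$, exactly as in the paper. Your closing caveat that $\Tor(G)$ need not be closed under multiplication for non-abelian $G$ is a fair observation about the same looseness present in the paper's own statement, but it does not alter the argument.
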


\begin{proof}
By the injectivity of $\pi \times \Ab$, $g$ is a torsion element in $A(G)$ if and only if $\pi(g)$ is a torsion element in $G$, and $\Ab(g)$ is a torsion element in a free abelian group, which means $\Ab(g)=0$. The injectivity of $\pi \times \Ab$ is then used once more to show the injectivity of the restriction of $\pi$ to $\Ker(\Ab) \cap \pi^{-1}(\Tor(G)) \to \Tor(G)$. 
\end{proof}

This is particularly useful in the two extreme cases:

\begin{enumerate}
\item If $G$ is finite (e.g., $G=S_n$), then $\Tor(G)=G$, hence
\[\Tor(A(G)) \ =\ \Ker(\Ab) \ =\ A(G)' \ \cong\ G'.\] 
\item If, on the contrary, $G$ has no torsion (e.g., $G=B_n$), then neither has $A(G)$:
\begin{align*}
\Tor(A(G)) \ &= \ \Ker(\Ab) \cap \pi^{-1}(\Tor(G)) \ = \ \Ker(\Ab) \cap \Ker(\pi) \\
& = \ \Ker(\pi \times \Ab) \ = \ \{1_{A(G)}\}.
\end{align*}
\end{enumerate}

\begin{xmp}
Our favourite \cb-group $S_n$ is finite and centerless for $n>2$. The above corollaries then yield:
\[\Tor(A(S_n)) \ = \ \Ker(\Ab) \ =\ A(S_n)'\ \cong\ S_n' \ = \ A_n,\]
\[Z(A(S_n)) = \Ker(\pi).\]
Here $A_n$ is the alternating group.
\end{xmp}

\begin{xmp}
Our favourite \cc-group $B_n$ has no torsion, hence neither has $A(B_n)$. The center of $B_n$ is cyclic, $Z(B_n)=\langle \Delta_n^2 \rangle$ for $n>2$, thus $Z(A(B_n))$ is generated by $\Ker(\pi)$ and $e_{\Delta_n^2} \in \pi^{-1}(\Delta_n^2 )$. This yields an isomorphism $Z(A(B_n)) \cong \Ker(\pi) \times \Z$.
\end{xmp}

\section{Structure groups as pullbacks}\label{Sec:Pullback}

Let us now describe the image of the group inclusion $\pi \times \Ab \colon  A(G) \hookrightarrow G \times \Z^{\CC(G)}$ for a \cb-group $G$. 

\begin{lmm}\label{L:opi} 
There exists a unique group morphism $\opi$ completing the following commutative square:
\[
\begin{tikzcd}
A(G) \arrow[two heads]{r}{\Ab} \arrow[two heads,swap]{d}{\pi} & \Z^{\CC(G)} \arrow[two heads, dashed]{d}{\opi} \\
G \arrow[two heads]{r}{\Ab} & \Z^{\CC_{\ig}(G)} \bigoplus \Gf = \Ab(G)
\end{tikzcd}
\]
(where we use the computation~\eqref{E:AbG}).
\end{lmm}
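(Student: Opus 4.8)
The key observation is the one already made in the text: the map $\Ab \colon A(G) \twoheadrightarrow \Z^{\CC(G)}$ is precisely the abelianisation map of $A(G)$ (abelianising the presentation~\eqref{Eq:StrGroups} collapses $e_a e_b = e_b e_{a\op b}$ into $e_a = e_{a\op b}$, so the generators become freely indexed by conjugacy classes). This reframes the whole statement as an instance of the universal property of abelianisation, and the plan is to exploit that and bypass any explicit computation.

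Concretely, I would first note that the target $\Ab(G) = \Z^{\CC_{\ig}(G)} \oplus \Gf$ is abelian, and that $\Ab \circ \pi \colon A(G) \to \Ab(G)$ is a composite of group morphisms, hence a group morphism into an abelian group. By the universal property of abelianisation, any such morphism factors uniquely through $\Ab \colon A(G) \twoheadrightarrow \Z^{\CC(G)}$, producing a unique morphism $\opi \colon \Z^{\CC(G)} \to \Ab(G)$ with $\opi \circ \Ab = \Ab \circ \pi$. This last identity is exactly the commutativity of the square, so existence and uniqueness both fall out in one stroke.

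Should an explicit description be preferable, I would instead define $\opi$ on the free generators $c_{\OO}$ of $\Z^{\CC(G)}$ by $\opi(c_{\OO}) := \Ab(a)$ for any $a \in \OO$. The only point to verify is well-definedness, i.e. independence of the representative $a$; this is immediate, since the abelianisation map $\Ab \colon G \to \Ab(G)$ is constant on each conjugacy class (conjugate elements have equal image in any abelian quotient). Freeness of $\Z^{\CC(G)}$ then extends this assignment to a unique morphism, and commutativity is checked on the generators $e_a$ of $A(G)$, where both $\opi(\Ab(e_a))$ and $\Ab(\pi(e_a))$ equal $\Ab(a)$.

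Finally, I would record the surjectivity of $\opi$ indicated in the diagram, which comes for free: $\Ab \circ \pi$ is a composite of two surjections, hence surjective, and as it factors as $\opi \circ \Ab$, the morphism $\opi$ is surjective as well. There is no genuine obstacle here; the only thing to get right is recognising the abelianisation framing (equivalently, the conjugacy-invariance of $\Ab$ on $G$), after which the argument is entirely formal.
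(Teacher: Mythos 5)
Your proposal is correct and is essentially the paper's argument in different packaging: both hinge on the fact that $\Ab \colon A(G) \twoheadrightarrow \Z^{\CC(G)}$ is the abelianisation map of $A(G)$, so that the morphism $\Ab \circ \pi$ into the abelian group $\Ab(G)$ factors uniquely through it. The paper merely inlines the proof of this universal property (lifting each generator $c_{\OO}$ to $A(G)$ and checking that ambiguities are commutators killed by $\Ab \circ \pi$), where you invoke it directly; your surjectivity argument via the composite of surjections is also the same conclusion reached slightly more cleanly.
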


\begin{proof}
Given a generator $c_{\OO}$ of $\Z^{\CC(G)}$, take any of its $Ab$-preimages $\widehat{c_{\OO}} \in A(G)$, and declare $\opi(c_{\OO})$ to be $\Ab(\pi(\widehat{c_{\OO}}))$. Different choices of the lift $\widehat{c_{\OO}}$ differ by commutator factors $ghg^{-1}h^{-1}$, with $g,h \in A(G)$. But these are sent by $\pi$ to commutators in $G$, which are killed by $\Ab$:
\[ghg^{-1}h^{-1} \overset{\pi}{\mapsto} \pi(g)\pi(h)\pi(g)^{-1}\pi(h)^{-1} \overset{\Ab}{\mapsto} 0.\]
 Thus our $\opi(c_{\OO})$ does not depend on the choice of $\widehat{c_{\OO}}$. Since the target group is abelian, we get a well defined group morphism $\opi$. It is surjective since the $c_{\OO}$'s with $\OO \in \CC_{\gg}(G)$ are sent to the corresponding generators of $\Ab(G)$. The square obtained is commutative by the construction of $\opi$, and this completion of the square is unique since the upper map $\Ab$ is surjective.
\end{proof} 

\begin{thm}\label{T:Pullback}   
For a \cb-group $G$, the structure group $A(G)$ is the pullback
\[ A(G) \ \cong \  G \underset{\Ab,\Ab(G),\opi}{\times} \Z^{\CC(G)}.\]
\end{thm}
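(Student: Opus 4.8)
The plan is to identify the injective homomorphism $\pi \times \Ab$ from Theorem~\ref{T:Inj} with the canonical comparison map into the pullback, and then to prove surjectivity onto that pullback. Realise the pullback $P := G \underset{\Ab,\Ab(G),\opi}{\times} \Z^{\CC(G)}$ concretely as the subgroup of $G \times \Z^{\CC(G)}$ of pairs $(g,v)$ satisfying $\Ab(g) = \opi(v)$.

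First I would check that $\pi \times \Ab$ already takes values in $P$. The commutativity of the square in Lemma~\ref{L:opi} says exactly that $\Ab \circ \pi = \opi \circ \Ab$ as maps $A(G) \to \Ab(G)$, so every pair $(\pi(h),\Ab(h))$ meets the pullback condition. Together with the injectivity supplied by Theorem~\ref{T:Inj}, this exhibits $\pi \times \Ab$ as an injective homomorphism $A(G) \hookrightarrow P$, and it remains only to show it is onto~$P$.

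For surjectivity, given $(g,v) \in P$ I would first produce an element of $A(G)$ with the correct second coordinate. Writing $v = \sum_{\OO} n_{\OO} c_{\OO}$ (a finite sum) and setting $h_0 = \prod_{\OO} e_{a_{\OO}}^{n_{\OO}}$, where the $a_{\OO}$ are the representatives fixed in the proof of Theorem~\ref{T:Inj}, one gets $\Ab(h_0) = v$ by definition of the generators $c_{\OO}$. Since $\opi(c_{\OO}) = \Ab(a_{\OO})$ by the construction in Lemma~\ref{L:opi} (take $e_{a_{\OO}}$ as the $\Ab$-preimage), the element $\pi(h_0) = \prod_{\OO} a_{\OO}^{n_{\OO}}$ satisfies $\Ab(\pi(h_0)) = \opi(v) = \Ab(g)$, the last equality being the pullback condition. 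Hence the discrepancy $g\,\pi(h_0)^{-1}$ lies in $\Ker(\Ab) = G'$.

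The crux is to correct the first coordinate without disturbing the second, and here I would invoke Corollary~\ref{C:Derived}: since $\pi$ restricts to an isomorphism $A(G)' \xrightarrow{\sim} G'$, there is a unique $w \in A(G)'$ with $\pi(w) = g\,\pi(h_0)^{-1}$. As $w$ is a product of commutators, $\Ab(w)=0$, so $h := w h_0$ satisfies $\pi(h) = g$ and $\Ab(h) = v$, giving $(\pi \times \Ab)(h) = (g,v)$. The main obstacle is precisely this last correction: the naive lift $h_0$ repairs the abelian coordinate but leaves the $G$-coordinate off by an element of $G'$, and the argument hinges on the fact---already secured by Corollary~\ref{C:Derived}---that this residual error can be lifted along a commutator in $A(G)$, which is automatically $\Ab$-trivial. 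Everything else is a routine unwinding of the description of $\opi$ and the pullback condition, so I expect no further difficulty.
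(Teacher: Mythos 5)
Your proof is correct, and its surjectivity step takes a genuinely different route from the paper's. Both arguments open identically: injectivity comes from Theorem~\ref{T:Inj}, and Lemma~\ref{L:opi}'s commutative square places the image inside the pullback. For surjectivity, however, the paper works in the opposite order to you: given $(a,x)$ with $\Ab(a)=\opi(x)$, it first nails the $G$-coordinate by writing $a$ in the generators of $\Pi$ and taking the corresponding word $g'$ in the $e_{a_i}$, and then repairs the abelian coordinate by multiplying by the central elements $t_{\OO}\in\Ker(\pi)$ from \eqref{E:T}; this forces a case analysis over $\CC_{\fg}(G)$, $\CC_{\ig}(G)$ and $\CC_{\ng}(G)$, plus a second step to absorb the $\Z^{\CC_{\ng}(G)}$-part of $x$. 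You instead nail the abelian coordinate first with the monomial $h_0=\prod_{\OO}e_{a_{\OO}}^{n_{\OO}}$, observe that the residual error $g\,\pi(h_0)^{-1}$ lies in $\Ker(\Ab)=G'$, and repair it by lifting along the isomorphism $A(G)'\cong G'$ of Corollary~\ref{C:Derived}, whose lift is automatically $\Ab$-trivial. Since Corollary~\ref{C:Derived} is deduced from Theorem~\ref{T:Inj} alone, there is no circularity. What each approach buys: yours is shorter and uniform --- no case distinction, no rewriting, no use of the $t_{\OO}$ --- and makes transparent that the pullback description is a formal consequence of injectivity plus the derived-subgroup isomorphism (very much in the spirit of the Goursat's lemma remark following the theorem); the paper's proof is more constructive, exhibiting explicit preimages built from the generators and the central elements $t_{\OO}$, which is machinery it reuses afterwards (e.g., in Theorem~\ref{T:DirectProductDetailed} and Proposition~\ref{P:Kernel}).
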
 
The above pullback will be denoted as $G \underset{\Ab(G)}{\times} \Z^{\CC(G)}$ for brevity.

\newpage 
\begin{xmp}
The pullback looks far less scary in our favourite example $G=S_n$. Here $\Z^{\CC(S_n)} \cong \Z^{P(n)}$, $\Ab(S_n) \cong \Z_2$, and the maps $\Ab \colon S_n \to \Z_2$ (resp., $\opi \colon \Z^{P(n)} \to \Z_2$) are the signature maps: they send even permutations (resp., generators corresponding to even partitions) to $0$ and the remaining ones to $1$. Thus $A(S_n)$ is really close to being a direct product:
\[ A(S_n) \ \cong \  S_n \underset{\Z_2}{\times} \Z^{P(n)}.\]
\end{xmp}

\begin{xmp}
In the case of braid groups, the group $\Z^{\CC(B_n)}$ is infinitely generated; $\Ab(B_n) \cong \Z$; the map $\Ab \colon B_n \to \Z$ computes the length $\lg$ of a word written in the standard (equivalently, extended standard, in the sense of \eqref{E:PresentationSn}) generators; and the map $\opi \colon \Z^{\CC(B_n)}\to \Z$ sends the generator $c_{\OO}$ to $\operatorname{lg}(a)$ for any $a \in \OO \subset B_n$.
Hence
\[ A(B_n) \ \cong \  B_n \underset{\Z}{\times} \Z^{\CC(B_n)}.\]
\end{xmp}

\begin{proof}[Proof of Theorem~\ref{T:Pullback}] 
Recall the definition of the pullback:
\[G \underset{\Ab(G)}{\times} \Z^{\CC(G)} = \{\ (a,x)\ | \ a \in G,  \ x \in \Z^{\CC(G)},\ \Ab(a)=\opi(x). \ \}\]
Theorem~\ref{T:Inj} together with Lemma~\ref{L:opi} guarantee that the injection $\pi \times \Ab$ actually takes values in $G \underset{\Ab(G)}{\times} \Z^{\CC(G)}$. We need to show the surjectivity, that is, given $a \in G$ and $x \in \Z^{\CC(G)}$ with $\Ab(a)=\opi(x)$, we are looking for a $g \in A(G)$ satisfying $\pi(g)=a$ and $\Ab(g)=x$.

Assume first that $x \in \Z^{\CC_{\gg}(G)}$, that is, its components indexed by $\OO \in \CC_{\ng}(G)$ are trivial. Write $a$ as $a_1^{\alpha_1}\cdots a_s^{\alpha_s}$, with $a_i \in \Gamma,\ \alpha_i = \pm 1$. Put $g'= e_{a_1}^{\alpha_1}\cdots e_{a_s}^{\alpha_s}$. It is almost what we need: $\pi(g')=a$, $p_{\OO}(\Ab(g'))=0$ for $\OO \in \CC_{\ng}(G)$, and 
\begin{equation}\label{E:PullbackComponents}
p_{\OO}(\Ab(g'))=p_{\OO}(\Ab(a))=p_{\OO}(\opi(x)) = p_{\OO}(x)
\end{equation}
 for $\OO \in \CC_{\ig}(G)$, where $p_{\OO}$ projects $\Z^{\CC(G)}$ or $\Z^{\CC_{\ig}(G)} \bigoplus \Gf$ onto the component indexed by~$\OO$. The case of conjugacy classes from $\CC_{\fg}(G)$ is more subtle, since in this case $p_{\OO}(\Ab(a))=p_{\OO}(\opi(x)) \in \Z_{k(\OO)}$, whereas $p_{\OO}(\Ab(g'))$ and $p_{\OO}(x)$ live in $\Z$. Equalities \eqref{E:PullbackComponents} then hold modulo $k(\OO)$ only. But then $g'$ can be replaced with 
 \[g=g' t_{\OO_1}^{p_1} t_{\OO_2}^{p_2} \cdots, \hspace*{2cm} \OO_i \in \CC_{\fg}(G),\ p_i \in \Z,\]
 with the exponents $p_i$ chosen to get the equalities $p_{\OO}(\Ab(g)) = p_{\OO}(x)$ in $\Z$ for all $\OO \in \CC_{\fg}(G)$. Since $p_{\OO}(\Ab(g)) =p_{\OO}(\Ab(g'))=p_{\OO}(x)$ for other $\OO$'s, we get $\Ab(g)=x$. Finally, recalling that $\pi(t_{\OO}) = \pi(e_{a_{\OO}}^{k(\OO)}) =a_{\OO}^{k(\OO)} = 1_G$, we get $\pi(g)=\pi(g')=a$, and we are done.

In the general case, consider $x+y \in \Z^{\CC(G)}$, with $x \in \Z^{\CC_{\gg}(G)}$ and $y \in \Z^{\CC_{\ng}(G)}$, and $b \in G$ such that $\Ab(b)=\opi(x+y)$. Since the map $\Ab$ is surjective, there is some $h \in A(G)$ with $\Ab(h)=y$. One can decompose $b$ as $(b \pi(h)^{-1}) \pi(h)$. Since 
\[\Ab(b \pi(h)^{-1}) = \Ab(b) - \Ab(\pi(h))=\opi(x+y)- \opi(\Ab(h))=\opi(x)+\opi(y)- \opi(y) =\opi(x),\]
the above argument yields a $g \in A(G)$ satisfying $\pi(g)=b \pi(h)^{-1}$ and $\Ab(g)=x$. But then $gh \in A(G)$ satisfies 
\[\pi(gh)=\pi(g)\pi(h) = (b \pi(h)^{-1}) \pi(h) = b\]
 and $\Ab(gh)=\Ab(g) + \Ab(h) =x + y$, as requested. 
\end{proof}

\begin{rmr}
The lemma and the theorem above are in fact a particular case of Goursat's lemma, valid for any subdirect product of groups. 
\end{rmr}

\section{Structure groups as direct products}\label{Sec:Dehn}

In the proof of Theorem~\ref{T:Inj}, we have seen that generators $e_a$ of $A(G)$ with $a \in \CC_{\ng}(G)$ and those with $a \in \CC_{\gg}(G)$ play very different roles. We will now show how to split $A(G)$ into two parts, a trivial one corresponding to $\CC_{\ng}(G)$, and an interesting one  corresponding to $\CC_{\gg}(G)$.

To do this, we will use a \cb-type presentation $\Pi$ of $G$, with generator set $\Gamma$, to produce many elements in $\Ker(\pi) \subseteq Z(A(G))$. Concretely, for any $\OO \in \CC_{\ng}(G)$, take an $a \in \OO$, write it as $a=a_1^{\alpha_1}\cdots a_s^{\alpha_s}$, with $a_i \in \Gamma,\ \alpha_i = \pm 1$. Put 
\begin{equation}\label{E:Tng}
t_{\OO} := e_a e_{a_s}^{-\alpha_s}\cdot e_{a_1}^{-\alpha_1} \in \Ker(\pi), \hspace*{1cm} \OO \in \CC_{\ng}(G).
\end{equation}
Recall also the elements $t_{\OO} \in \Ker(\pi)$ defined in \eqref{E:T} for all $\OO \in \CC_{\fg}(G)$.

\begin{prp}\label{P:ManyCentralElements}
The above elements $t_{\OO}$, with $\OO \in \CC_{\ng}(G) \sqcup \CC_{\fg}(G)$, are free abelian generators of a subgroup of $\Ker(\pi)$ isomorphic to $\Z^{\CC_{\ng}(G)} \times \Z^{\CC_{\fg}(G)}$.
\end{prp}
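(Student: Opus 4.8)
The plan is to realise the family $(t_\OO)$ as the image of the standard basis under an injective homomorphism out of the abstract free abelian group. First I would record that every $t_\OO$ genuinely lies in $\Ker(\pi)$: for $\OO \in \CC_{\fg}(G)$ this is built into~\eqref{E:T}, while for $\OO \in \CC_{\ng}(G)$ a direct computation from~\eqref{E:Tng} gives $\pi(t_\OO) = a\, a_s^{-\alpha_s}\cdots a_1^{-\alpha_1} = a\,a^{-1} = 1_G$. Since $1_G \in Z(G)$, Corollary~\ref{C:Center} yields $\Ker(\pi) = \pi^{-1}(1_G) \subseteq \pi^{-1}(Z(G)) = Z(A(G))$, so all the $t_\OO$ are central and in particular commute with one another. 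Hence there is a well-defined group homomorphism
\[\phi \colon \Z^{\CC_{\ng}(G)} \times \Z^{\CC_{\fg}(G)} \to A(G)\]
sending each standard basis vector to the corresponding $t_\OO$, and its image is exactly the subgroup under consideration. The proposition is then equivalent to the injectivity of $\phi$.

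To prove injectivity I would compose with the abelianisation $\Ab \colon A(G) \to \Z^{\CC(G)}$ and show that $\Ab \circ \phi$ is already injective. The images are easy to read off. For $\OO \in \CC_{\fg}(G)$, Lemma~\ref{L:CentralPowers} gives $\Ab(t_\OO) = k(\OO)\, c_\OO$, supported purely on the $\CC_{\gg}(G)$-indexed coordinates. For $\OO \in \CC_{\ng}(G)$, expanding~\eqref{E:Tng} gives $\Ab(t_\OO) = c_\OO - \sum_i \alpha_i\, c_{\OO_i}$, where each $\OO_i$ is the conjugacy class of the \emph{generator} $a_i \in \Gamma$ and therefore lies in $\CC_{\gg}(G)$. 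The decisive observation is that a coordinate $c_\OO$ with $\OO \in \CC_{\ng}(G)$ occurs in exactly one of these images, namely in $\Ab(t_\OO)$ itself, and there with coefficient $1$.

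This makes $\Ab \circ \phi$ triangular with respect to the splitting $\Z^{\CC(G)} = \Z^{\CC_{\ng}(G)} \oplus \Z^{\CC_{\gg}(G)}$: the $\CC_{\ng}(G) \to \CC_{\ng}(G)$ block is the identity, the $\CC_{\fg}(G) \to \CC_{\ng}(G)$ block is zero, and the $\CC_{\fg}(G) \to \CC_{\gg}(G)$ block is diagonal with nonzero entries $k(\OO) \geq 2$. Concretely, given a relation $\prod_{\OO \in \CC_{\ng}} t_\OO^{\,n_\OO}\,\prod_{\OO \in \CC_{\fg}} t_\OO^{\,m_\OO} = 1$, applying $\Ab$ and projecting onto the $\CC_{\ng}(G)$-coordinates yields $\sum_{\OO \in \CC_{\ng}} n_\OO\, c_\OO = 0$, forcing every $n_\OO = 0$; what remains is $\sum_{\OO \in \CC_{\fg}} m_\OO\, k(\OO)\, c_\OO = 0$ in the free abelian group $\Z^{\CC(G)}$, forcing every $m_\OO = 0$ as $k(\OO) \neq 0$. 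Thus $\phi$ is injective, proving the claim. The only point requiring genuine care is the bookkeeping behind the decisive observation — that the generators $a_i$ appearing in~\eqref{E:Tng} all lie in $\CC_{\gg}(G)$ and so cannot contaminate any $\CC_{\ng}(G)$-coordinate; everything else is routine.
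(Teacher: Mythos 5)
Your proof is correct and follows essentially the same route as the paper: the paper likewise establishes freeness by passing to the abelianisation and computing $\varepsilon_{\OO}(t_{\OO'})$, which is precisely your triangular block structure (identity on the $\CC_{\ng}(G)$ block, zero $\CC_{\fg}(G)\to\CC_{\ng}(G)$ block, and diagonal entries $k(\OO)$ on the $\CC_{\fg}(G)$ block). The only difference is presentational: you verify membership in $\Ker(\pi)$ and centrality (via Corollary~\ref{C:Center}) inside the proof, whereas the paper treats these as part of the setup preceding the proposition.
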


We will later prove that this recovers the whole $\Ker(\pi)$.

\begin{proof}
The freeness can be seen in the abelianisation. Indeed, for $\OO \in \CC_{\ng}(G)$,
\[\varepsilon_{\OO}(t_{\OO'}) = \begin{cases} 1 &\text{ if } \OO'=\OO,\\
0 &\text{ otherwise.}\end{cases}\]
Further for $\OO, \OO' \in \CC_{\fg}(G)$,
\[\varepsilon_{\OO}(t_{\OO'}) = \begin{cases} k_{\OO} &\text{ if } \OO'=\OO,\\
0 &\text{ otherwise.}\end{cases} \qedhere\]
\end{proof}

This is the trivial part of $A(G)$. Let us now describe the interesting part.

\begin{dfn} We call the \emph{Dehn quandle} of $(G,\Pi)$ the union $\Gamma^G$ of the $G$-conjugacy classes of the generator set $\Gamma$ of $\Pi$. Equivalently, it is the subquandle of $\Conj(G)$ generated by $\Gamma$.
\end{dfn}

The structure group of this quandle will be denoted by
\[G^{\Deh}\ :=\ \As(\Gamma^G) \ = \ \langle e_a, \, a \in \Gamma^G \,|\, e_a e_b = e_b e_{a\op b},\ a, b \in \Gamma^G \rangle.\]

The arguments from the proofs of Theorems \ref{T:Inj} and \ref{T:Pullback} can be applied verbatim to $G^{\Deh}$, simplified significantly since one ne longer needs to take the conjugacy classes from $\CC_{\ng}(G)$ into account. This yields:

\begin{thm}\label{T:InjForDehn}
For a \cb-group $G$, the map
\[\pi \times \Abf \colon G^{\Deh} \to G \times \Z^{\CC_{\fg}(G)}\]
is injective.
\end{thm}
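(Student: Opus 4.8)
The plan is to replay the proof of Theorem~\ref{T:Inj} for the quandle $\Gamma^G$ in place of $\Conj(G)$, taking advantage of the fact that $\Gamma^G$ meets only the conjugacy classes in $\CC_{\gg}(G)$, so that the entire $\CC_{\ng}(G)$ bookkeeping disappears. Concretely, I would take an element $g \in G^{\Deh}$ with $\pi(g)=1_G$ and $\Abf(g)=0$ --- that is, $\varepsilon_{\OO}(g)=0$ for every $\OO \in \CC_{\fg}(G)$ --- and show that $g$ is trivial. Since every generator $e_b$ of $G^{\Deh}$ is indexed by some $b \in \Gamma^G$, and every such $b$ lies in the conjugacy class of a generator $a_{\OO} \in \Gamma$, the rewriting~\eqref{E:ToRepresentatives} applies and lets me express each $e_b$ through the generators $e_a$ with $a \in \Gamma$.

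Carrying this out, I would first rearrange $g$ as a word in the $e_b$, $b \in \Gamma^G$, and then apply~\eqref{E:ToRepresentatives} to rewrite it as a word
\[
g = e_{b_1}^{\beta_1} \cdots e_{b_t}^{\beta_t}, \qquad b_j \in \Gamma,\ \beta_j = \pm 1.
\]
This is the analogue of~\eqref{E:ToGenerators} with no $\CC_{\ng}(G)$-prefix, which is exactly the simplification the statement refers to. Next I would use $\pi(g) = b_1^{\beta_1} \cdots b_t^{\beta_t} = 1_G$: the word $b_1^{\beta_1}\cdots b_t^{\beta_t}$ can be reduced to the empty word using only relations of $\Pi$, and each such relation lifts to $G^{\Deh}$ --- a conjugation relation $a \op b = c$ has the analogue $e_a \op e_b = e_c$ coming from~\eqref{Eq:StrGroups}, while a power relation $a^k = 1$ has the analogue $e_a^k = t_{\OO}$ of~\eqref{E:T}, with $t_{\OO}$ central by Lemma~\ref{L:CentralPowers}. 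Imitating the reduction of $\pi(g)$ inside $G^{\Deh}$ and sliding the central factors $t_{\OO}$ to the front leaves
\[
g = \prod_{\OO \in \CC_{\fg}(G)} t_{\OO}^{p_{\OO}}, \qquad p_{\OO} \in \Z,
\]
with only finitely many $p_{\OO}$ nonzero. Finally, applying $\varepsilon_{\OO}$ gives $0 = \varepsilon_{\OO}(g) = p_{\OO}\, k(\OO)$ for each $\OO \in \CC_{\fg}(G)$, forcing every $p_{\OO}=0$, hence $g = 1_{G^{\Deh}}$.

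The step I expect to demand the most care is explaining why the target only needs the factor $\Z^{\CC_{\fg}(G)}$ rather than the full $\Z^{\CC_{\gg}(G)}$ --- in other words, why the $\CC_{\ig}(G)$-components of the abelianisation may be dropped, as in Remark~\ref{R:BetterInclusion}. The point is that classes in $\CC_{\ig}(G)$ carry no power relation, so when the trivialisation of $\pi(g)=1_G$ is imitated in $G^{\Deh}$, the corresponding generators cancel directly through conjugation relations and leave behind no residual central element; only the finite-order classes contribute central survivors $t_{\OO}$, and these are precisely the ones detected by $\Abf$. Verifying that the reduction of $\pi(g)$ can indeed be performed using only relations of $\Pi$, and that every such move has an exact counterpart in $G^{\Deh}$, is the technical heart, but it follows verbatim from the corresponding passage in the proof of Theorem~\ref{T:Inj}.
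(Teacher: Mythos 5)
Your proof is correct and takes essentially the same route as the paper: the paper's entire justification of Theorem~\ref{T:InjForDehn} is that the arguments of Theorems~\ref{T:Inj} and~\ref{T:Pullback} apply verbatim to $G^{\Deh}$, simplified because the classes in $\CC_{\ng}(G)$ no longer intervene, and that is exactly the argument you carried out in detail. Your closing point about why only the $\Z^{\CC_{\fg}(G)}$ factor is needed in the target (the $\CC_{\ig}(G)$ classes carry no power relations, hence leave no central survivors) is the same observation the paper records in Remark~\ref{R:BetterInclusion}.
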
 
Here we abusively keep the notation $\pi$ for the map $G^{\Deh} \to G$, $e_a \mapsto a$; and $\Abf$ is the composition of the abelianisation map $G^{\Deh} \to \Z^{\CC_{\gg}(G)} = \Z^{\CC_{\fg}(G)} \oplus \Z^{\CC_{\ig}(G)}$ with the projection onto its part $\Z^{\CC_{\fg}(G)}$ corresponding to finite-order generators. 

\begin{prp}\label{P:PullbackForDehn}
For a \cb-group $G$, the structure group $G^{\Deh}$ is the pullback
\[ G^{\Deh}\ \cong \  G \underset{\Abf,\Gf,\opi}{\times} \Z^{\CC_{\fg}(G)},\] 
where :
\begin{itemize}
\item $\opi \colon \Z^{\CC_{\fg}(G)} \twoheadrightarrow \Gf = \bigoplus_{\OO \in \CC_{\fg}(G)} \Z_{k(\OO)}$ is the obvious projection of each copy of $\Z$ in $\Z^{\CC_{\fg}(G)}$ indexed by $\OO \in \CC_{\fg}(G)$ onto $\Z_{k(\OO)}$;
\item $\Abf \colon G \to \Gf$ is the composition of the abelianisation map $G \twoheadrightarrow \Z^{\CC_{\ig}(G)} \oplus \Gf$ with the projection onto $\Gf$.
\end{itemize}
\end{prp}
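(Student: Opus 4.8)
The plan is to follow the proof of Theorem~\ref{T:Pullback} almost word for word, keeping in mind that the target now only records the finite-order part $\Z^{\CC_{\fg}(G)}$, so that the awkward $\CC_{\ng}(G)$-bookkeeping disappears entirely. Injectivity of $\pi \times \Abf$ is already Theorem~\ref{T:InjForDehn}, so the whole task is to identify the image with the pullback. This splits into two verifications: that $\pi \times \Abf$ factors through the pullback, and that it hits every element of it.

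First I would set up the analogue of the commutative square of Lemma~\ref{L:opi}, namely $\opi \circ \Abf = \Abf \circ \pi$ as maps $G^{\Deh} \to \Gf$. Here, contrary to Lemma~\ref{L:opi}, nothing needs to be constructed: both $\opi \colon \Z^{\CC_{\fg}(G)} \twoheadrightarrow \Gf$ and $\Abf \colon G \twoheadrightarrow \Gf$ are the explicit projections given in the statement, so it suffices to check the identity on the generators $e_a$, $a \in \Gamma$. For $a$ in a class $\OO \in \CC_{\fg}(G)$ both sides send $e_a$ to the class of $1$ in the $\Z_{k(\OO)}$-summand; for $a$ in a class from $\CC_{\ig}(G)$ both sides vanish. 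Commutativity of the square then says exactly that $(\pi \times \Abf)(g)$ satisfies the pullback constraint $\Abf(\pi(g)) = \opi(\Abf(g))$ for every $g$, so the injection of Theorem~\ref{T:InjForDehn} takes values in $G \underset{\Abf,\Gf,\opi}{\times} \Z^{\CC_{\fg}(G)}$.

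For surjectivity I would reuse only the first (and easier) half of the proof of Theorem~\ref{T:Pullback}. Given $(a,x)$ in the pullback, write $a = a_1^{\alpha_1}\cdots a_s^{\alpha_s}$ with $a_i \in \Gamma$, $\alpha_i = \pm 1$, and set $g' = e_{a_1}^{\alpha_1}\cdots e_{a_s}^{\alpha_s} \in G^{\Deh}$, so that $\pi(g') = a$. For each $\OO \in \CC_{\fg}(G)$ the commutative square gives $p_{\OO}(\Abf(g')) \equiv p_{\OO}(\Abf(a)) = p_{\OO}(\opi(x)) = p_{\OO}(x)$ modulo $k(\OO)$, so $p_{\OO}(\Abf(g'))$ and $p_{\OO}(x)$ differ by a multiple of $k(\OO)$. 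Since the central elements $t_{\OO}$ of~\eqref{E:T} satisfy $\pi(t_{\OO}) = 1_G$ and $\Abf(t_{\OO}) = k(\OO)\,c_{\OO}$, I would correct $g'$ to $g = g'\, \prod_{\OO \in \CC_{\fg}(G)} t_{\OO}^{p_{\OO}}$ with suitable exponents $p_{\OO} \in \Z$ (only finitely many nonzero), obtaining $\Abf(g) = x$ on the nose while leaving $\pi(g) = \pi(g') = a$ untouched.

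The proof carries essentially no obstacle: the only point demanding care is the same modular bookkeeping as in Theorem~\ref{T:Pullback}, i.e. the fact that $\Abf(a)$ only constrains the $\CC_{\fg}(G)$-components of $\Abf(g')$ modulo $k(\OO)$, which the central corrections $t_{\OO}$ are designed to absorb. The genuine simplification over Theorem~\ref{T:Pullback} is that there is no $\CC_{\ng}(G)$-part in the target, so the decomposition $b = (b\pi(h)^{-1})\pi(h)$ used there to handle the $\CC_{\ng}(G)$-components is never needed, and the single-case argument above already yields full surjectivity.
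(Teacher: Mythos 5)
Your proof is correct and takes essentially the same route as the paper, whose proof of this proposition consists precisely of the remark that the arguments of Theorems \ref{T:Inj} and \ref{T:Pullback} apply verbatim to $G^{\Deh}$, simplified because the classes from $\CC_{\ng}(G)$ no longer intervene: you implement exactly this, with injectivity quoted from Theorem \ref{T:InjForDehn}, the explicit commutative square replacing the construction of Lemma \ref{L:opi}, and the $t_{\OO}$-correction argument giving surjectivity. Your closing observation that the decomposition $b=(b\pi(h)^{-1})\pi(h)$ from the proof of Theorem \ref{T:Pullback} is never needed is precisely the ``significant simplification'' the paper alludes to.
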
 

In other words, one has a pullback
\[
\begin{tikzcd}
G^{\Deh} \arrow[two heads]{r}{\Abf} \arrow[two heads,swap]{d}{\pi} & \Z^{\CC_{\fg}(G)} \arrow[two heads]{d}{\opi} \\
G \arrow[two heads]{r}{\Abf} & \Gf.
\end{tikzcd}
\]

\begin{xmp}
For the presentation \eqref{E:PresentationSn} of $S_n$, $\Gamma^{S_n} \subset S_n$ is the subset $\Transp_n$ of all transpositions, endowed with the conjugation operation. Our pullback becomes
\[
\begin{tikzcd}
S_n^{\Deh} =\As(\Transp_n) \arrow[two heads]{r}{\varepsilon} \arrow[two heads,swap]{d}{\pi} & \Z \arrow[two heads]{d} \\
S_n \arrow[two heads]{r}{\operatorname{sign}} & \Z_2.
\end{tikzcd}
\]
The horizontal maps are abelianisations: the degree map \eqref{E:Degree}, and the signature map for permutations. The rightmost map is the usual quotient $\Z \twoheadrightarrow \Z_2$.
\end{xmp}

The group $G^{\Deh}$ can be described in a more accessible way:

\begin{thm}\label{T:DehnGenRel}
Fix a \cb-type presentation $\Pi$ of a group $G$, with generator set $\Gamma$. A presentation of the group $G^{\Deh}$ can then be obtained from $\Pi$ as follows:
\begin{itemize}
\item \underline{generators}:\ $e_a, \ a \in \Gamma$;
\item \underline{relations} of two types:
\begin{enumerate}
\item $e_a \op e_b = e_c$ \hspace*{.3cm} for each conjugation relation $a \op b = c$ in $\Pi$;
\item $e_a^k e_b = e_b e_a^k$ \hspace*{.3cm} for all $b \in \Gamma$, and each power relation $a^k=1$ in $\Pi$.
\end{enumerate}
\end{itemize}
\end{thm}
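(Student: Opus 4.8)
The plan is to let $H$ denote the abstractly presented group on the right-hand side, with generators $f_a,\ a\in\Gamma$, subject to relations (1) and (2), and to exhibit a pair of mutually inverse maps between $H$ and $G^{\Deh}$. First I would define $\psi\colon H\to G^{\Deh}$ by $f_a\mapsto e_a$. This is well defined: relation (1) maps to $e_a\op e_b=e_{a\op b}=e_c$, which is the defining relation of $\As(\Gamma^G)$ combined with $a\op b=c$ in $G$; and relation (2) maps to $e_a^k e_b=e_b e_a^k$, which holds because $a^k=1$ is central in $G$, so $e_a^k$ is central in $G^{\Deh}$ by Lemma~\ref{L:CentralPowers} applied to the quandle $\Gamma^G$. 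The map $\psi$ is surjective: every $a\in\Gamma^G$ equals $a_0\op w$ for some $a_0\in\Gamma$ and $w\in G$, and writing $w$ as a word in $\Gamma^{\pm1}$ and applying the rewriting \eqref{E:ToRepresentatives} expresses $e_a$ as a product of the $e_{a_i}^{\pm1}$ with $a_i\in\Gamma$, all of which lie in $\operatorname{Im}\psi$.

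Next I would analyse $H$ through the companion morphism $\pi'\colon H\to G$, $f_a\mapsto a$, which is well defined since relation (1) becomes a conjugation relation of $\Pi$ and relation (2) becomes $[a^k,b]=[1,b]=1$. Let $T\le H$ be the subgroup generated by the elements $f_a^k$, one for each power relation $a^k=1$ of $\Pi$. Relation (2) makes each $f_a^k$ commute with every generator, hence $T$ is central, in particular abelian. Killing $T$ amounts to imposing $f_a^k=1$, which turns relation (2) into a tautology and leaves exactly the presentation $\Pi$; thus $H/T\cong G$, and therefore $\Ker\pi'=T$. Note also that $\pi\circ\psi=\pi'$, where $\pi\colon G^{\Deh}\to G$ is $e_a\mapsto a$.

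Finally I would prove that $\psi$ is injective, which together with surjectivity yields the desired isomorphism. If $\psi(h)=1$ then $\pi'(h)=\pi(\psi(h))=1$, so $h\in\Ker\pi'=T$. Using the convention of one power relation per conjugacy class, $T$ is abelian on the generators $f_{a_{\OO}}^{k(\OO)}$ indexed by $\OO\in\CC_{\fg}(G)$, so $h=\prod_{\OO}\bigl(f_{a_{\OO}}^{k(\OO)}\bigr)^{n_{\OO}}$ and $\psi(h)=\prod_{\OO} t_{\OO}^{\,n_{\OO}}$ with $t_{\OO}$ as in \eqref{E:T}. Since the $t_{\OO}$ generate a \emph{free} abelian subgroup of $G^{\Deh}$ by Lemma~\ref{L:CentralPowers}, the equality $\psi(h)=1$ forces all $n_{\OO}=0$, hence $h=1$.

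I expect the main obstacle to be the precise identification $\Ker\pi'=T$ together with the centrality of $T$: this is exactly where replacing the power relations $a^k=1$ of $\Pi$ by the centrality relations (2) does the right thing, producing a central extension of $G$ whose kernel matches the free abelian kernel $\langle t_{\OO}\rangle$ of $G^{\Deh}$. The one-power-relation-per-conjugacy-class convention is essential here: without it the $f_a^k$ could satisfy nontrivial relations in $H$ that are invisible in $G^{\Deh}$, breaking injectivity.
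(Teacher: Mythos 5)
Your proof is correct, and its skeleton matches the paper's: the relations hold in $G^{\Deh}$ by definition and by Lemma~\ref{L:CentralPowers}, the $e_a$ with $a\in\Gamma$ generate $G^{\Deh}$ via the rewriting \eqref{E:ToRepresentatives}, and the final punchline is the freeness of the central powers, detected in the abelianization. Where you genuinely diverge is the injectivity step. The paper proves completeness of the relations by rewriting: a word in the $e_a$, $a\in\Gamma$, trivial in $G^{\Deh}$, has trivial image in $G$, and one \emph{imitates} a $\Pi$-rewriting sequence inside $\Pi^{\Deh}$, each use of a power relation $a^k=1$ producing a central factor $e_a^{\pm k}$ pushed to the front, until the word becomes a product $\prod_{\OO}(e_{a_{\OO}}^{k(\OO)})^{d_{\OO}}$; then $\Abf$ kills the exponents. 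You replace this rewriting-imitation by a von Dyck-type argument: the quotient of the abstract group $H$ by the central subgroup $T=\langle f_a^k\rangle$ has exactly the presentation $\Pi$ (relations (2) become tautologies once $f_a^k=1$ is imposed, and the normal closure of central elements is the subgroup they generate), whence $\Ker(\pi')=T$; injectivity of $\psi$ then follows from the same freeness argument. Your route is cleaner and more visibly rigorous than the paper's ``imitate the rewriting'' step, and it makes explicit that $H$ is a central extension of $G$ matching \eqref{E:GDvsG}; the paper's route stays inside the rewriting formalism it already set up for Theorem~\ref{T:Inj}. One notational caveat, which is not a gap: the elements you call $t_{\OO}$ ``as in \eqref{E:T}'' live in $A(G)$, and Lemma~\ref{L:CentralPowers} is stated for $A(G)$; what you actually use are their analogues $e_{a_{\OO}}^{k(\OO)}$ in $G^{\Deh}$, whose centrality and freeness follow from the same proof applied to the quandle $\Gamma^G$ (or from Theorem~\ref{T:InjForDehn}, since $\pi(e_{a_{\OO}}^{k(\OO)})=1$ and $\Abf(e_{a_{\OO}}^{k(\OO)})=k(\OO)c_{\OO}$). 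The paper makes the same silent adaptation.
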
 
Thus $G^{\Deh}$ is a relaxed version of $G$: the powers $e_a^k$ are required to be central rather than trivial. We will call it the \emph{Dehn-type lift} of $G$, and denote the presentation from the statement of the theorem as $\Pi^{\Deh}$. 

\begin{proof}
Relations $e_a \op e_b = e_c$ hold in $G^{\Deh}$ by its definition, and the powers $e_a^k$ are central due to Lemma \ref{L:CentralPowers}. The $e_a$ with $a \in \Gamma$ generate $G^{\Deh}$ using the rewriting mechanism from \eqref{E:ToRepresentatives}. It remains to check that if a word
\[w=e_{b_1}^{\beta_1}\cdots e_{b_s}^{\beta_s}, \hspace*{1cm} b_i \in \Gamma,\ \beta_i = \pm 1,\]
represents a trivial element $g \in G^{\Deh}$, then it can be made trivial using relations from the presentation $\Pi^{\Deh}$ only. We will use the same rewriting strategy as in the proof of Theorem \ref{T:Inj}. Since $\pi(g)=1_G$, one can imitate a rewriting sequence transforming $b_1^{\beta_1}\cdots b_s^{\beta_s}$ into $1_G$, to transform $w$ into a finite product of terms $(e_{a_i}^{k_i})^{d_i}$, at most one for each power relation $e_{a_i}^{k_i}=1$ in $\Pi$, using the presentation $\Pi^{\Deh}$ only. But since $\Abf(g)=0 \in \Z^{\CC_{\fg}(G)}$, all the exponents $p_i$ must be trivial.
\end{proof}

One can relax the power relations $a^k=1$ even further, by simply forgetting them. This yields an \emph{Artin-type lift} of $G$:

\begin{dfn}
Given a group $G$ with a \cb-type presentation $\Pi$, the group $G^{\Art}$ is defined by keeping all the generators and only conjugation relations from $\Pi$.
\end{dfn} 

Theorem \ref{T:DehnGenRel} and Lemma \ref{L:CentralPowers} now immediately imply Theorem \ref{T:DehnAsIntermediate}. Theorem \ref{T:CGroupIFF} requires slightly more work:

\begin{proof}[Proof of Theorem \ref{T:CGroupIFF}]
For a \cc-group $G$, the three groups above have the same presentation, hence
\[ G \ \cong \  G^{\Deh} \ = \ \As(\Gamma^G),\]
which proves the first part of the theorem. 

Further, for a finite \cb-group $G$, the finite quotient $\oAs(\Gamma^G)$ is the quotient of $\As(\Gamma^G)$ by the powers $e_a^k$ of all generators chosen to have trivial action on $G$. But this is equivalent to $k$ being the minimal positive integer rendering $a^k$ central in $G$. If $G$ has trivial center, this translates as $a^k=1$, which means that $\Pi$ contains a conjugate of the power relation $a^k=1$ (the minimality follows by passing to the abelianisation). By Theorem \ref{T:DehnAsIntermediate}, the group $G$ can be regarded as a quotient of $G^{\Deh} = \As(\Gamma^G)$ by the same elements. Therefore $G \cong \oAs(\Gamma^G)$. Hence the second part of the theorem.
\end{proof}

\begin{xmp}
For $S_n$, the Artin-type lift gives the presentation for the braid group $B_n$ recalled in Example \ref{Ex:BnPresentation}. The group $S_n^{\Deh}$ is then obtained from $B_n$ by imposing the centrality of the square of a generator, say, $\sigma_1^2$. This is summarised as follows:

\vspace*{-.2cm}
\begin{align*}
S_n^{\Art} \ \cong \ \As(&\Gamma^{B_n})  \ \cong \ B_n \\
&\twoheadrightarrowdown\\
S_n^{\Deh} \ =\ \As(\Transp_n) \ &\cong \ \raisebox{.2em}{$B_n \times \langle t \rangle$}\left/\raisebox{-.2em}{$\sigma_1^2=t$}\right.  \\
&\twoheadrightarrowdown \\ 
 S_n \ \cong \ \oAs(\Transp_n&) \cong \ \raisebox{.2em}{$B_n$}\left/\raisebox{-.2em}{$\sigma_1^2=1$}\right.
\end{align*}
Here $\Gamma^{B_n}$ is the set of all conjugates of the generator $\sigma_1$ in $B_n$.
\end{xmp}

\begin{rmr}
Theorem \ref{T:CGroupIFF} implies in particular that the image of the structure group functor is the category of \cc-groups. 
\end{rmr}

\medskip
We are now ready to prove the promised splitting of $A(G)$ into two parts.
\begin{thm}\label{T:DirectProductDetailed}
For any \cb-group $G$, one has a group isomorphism
\begin{align*}
\phi \colon G^{\Deh} \times \Z^{\CC_{\ng}(G)} \ &\overset{\sim}{\to} \ A(G),\\
e_a \ &\mapsto \ e_a, \hspace*{1cm} a \in \Gamma^G,\\
c_{\OO} \ &\mapsto  \ t_{\OO}, \hspace*{1cm} \OO \in \CC_{\ng}(G),
\end{align*}
where $c_{\OO}$ is the generator of $\Z^{\CC_{\ng}(G)}$ corresponding to $\OO$, and $t_{\OO}$ is defined in \eqref{E:Tng}.
\end{thm}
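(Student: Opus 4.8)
The plan is to exhibit $\phi$ as a well-defined group homomorphism and then prove it is bijective, reusing the rewriting machinery from the proof of Theorem~\ref{T:Inj} together with the two injectivity statements already established. First I would check well-definedness on each factor. The quandle inclusion $\Gamma^G \hookrightarrow \Conj(G)$ induces, by functoriality of $\As$, a morphism $\iota \colon G^{\Deh} \to A(G)$, $e_a \mapsto e_a$; concretely, the defining relations of $G^{\Deh} = \As(\Gamma^G)$ form a subset of those of $A(G)$, so none is violated. On the other factor, the elements $t_{\OO}$, $\OO \in \CC_{\ng}(G)$, lie in $\Ker(\pi)$ by \eqref{E:Tng}, and $\Ker(\pi)$ is central in $A(G)$ (by Corollary~\ref{C:Comm}, as $\pi(t_{\OO})=1$ commutes with everything). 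Hence the $t_{\OO}$ pairwise commute, giving a morphism $\Z^{\CC_{\ng}(G)} \to A(G)$, $c_{\OO}\mapsto t_{\OO}$; being central, their images also commute with $\iota(G^{\Deh})$. The two morphisms therefore assemble into a single homomorphism $\phi$ out of the direct product.

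For surjectivity, since $A(G)$ is generated by the $e_a$, $a\in G$, it suffices to place each $e_a$ in $\Im(\phi)$. If $a$ lies in a class $\OO\in\CC_{\gg}(G)$, then $a\in\Gamma^G$ and $e_a=\iota(e_a)\in\Im(\phi)$. If $a\in\OO\in\CC_{\ng}(G)$, I would first recover the chosen representative: rearranging \eqref{E:Tng} yields $e_{a_{\OO}}=t_{\OO}\,e_{a_1}^{\alpha_1}\cdots e_{a_s}^{\alpha_s}$, a product of $\phi(c_{\OO})$ with generators from $\Gamma^G$, hence in the image; any other $e_a$ with $a\in\OO$ is a $\iota(G^{\Deh})$-conjugate of $e_{a_{\OO}}$ via the rewriting \eqref{E:ToRepresentatives}, so it too is in the image.

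For injectivity, the cleanest route is to compose with the injection $\pi\times\Ab$ of Theorem~\ref{T:Inj} and show $(\pi\times\Ab)\circ\phi$ is injective. Writing $\phi(x,n)=\iota(x)\prod_{\OO}t_{\OO}^{n_{\OO}}$, centrality and $\pi(t_{\OO})=1$ give $\pi(\phi(x,n))=\pi(\iota(x))$, which is the structural map $G^{\Deh}\to G$ applied to $x$; meanwhile $\Ab(\phi(x,n))=\Ab(\iota(x))+\sum_{\OO}n_{\OO}\,\Ab(t_{\OO})$. I would then split $\Z^{\CC(G)}=\Z^{\CC_{\gg}(G)}\oplus\Z^{\CC_{\ng}(G)}$ and read off the $\CC_{\ng}(G)$-components: $\Ab(\iota(x))$ is supported on $\CC_{\gg}(G)$, since every generator of $G^{\Deh}$ maps to a class in $\CC_{\gg}(G)$, while $\varepsilon_{\OO}(t_{\OO'})=\delta_{\OO\OO'}$ for $\OO,\OO'\in\CC_{\ng}(G)$ by the proof of Proposition~\ref{P:ManyCentralElements}. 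Thus the $\CC_{\ng}(G)$-part of $\Ab(\phi(x,n))$ is exactly $\sum_{\OO}n_{\OO}c_{\OO}$, and its vanishing forces $n=0$ by freeness. Once $n=0$, I am left with $\pi(\iota(x))=1$ and $\Abf(x)=0$ (the $\CC_{\fg}(G)$-part of $\Ab(\iota(x))$), whence Theorem~\ref{T:InjForDehn} gives $x=1$.

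I expect the main obstacle to be a bookkeeping subtlety inside this last step: the elements $t_{\OO}$ with $\OO\in\CC_{\ng}(G)$ do \emph{not} have clean abelianisations, since by \eqref{E:Tng} their $\Ab$-image may carry nonzero $\CC_{\gg}(G)$-components contributed by the letters $e_{a_i}$. The resolution is precisely to project onto the $\CC_{\ng}(G)$-coordinates first, where the $t_{\OO}$ contribute diagonally and $\iota(x)$ contributes nothing, thereby isolating $n$ before handling $x$ via Theorem~\ref{T:InjForDehn}. Everything else is a direct transfer of the argument already used to prove Theorem~\ref{T:Inj}.
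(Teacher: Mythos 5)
Your proposal is correct and follows essentially the same route as the paper's own proof: well-definedness from the inclusion of relations plus centrality of the $t_{\OO}$'s, surjectivity by reconstructing each generator $e_a$ from $\iota(G^{\Deh})$ and the $t_{\OO}$'s via \eqref{E:Tng} and \eqref{E:ToRepresentatives}, and injectivity by first applying the $\varepsilon_{\OO}$'s with $\OO \in \CC_{\ng}(G)$ to force the $\Z^{\CC_{\ng}(G)}$-component to vanish and then invoking Theorem~\ref{T:InjForDehn}. The only difference is cosmetic: you spell out the bookkeeping point that $\Ab(t_{\OO})$ may carry nonzero $\CC_{\gg}(G)$-components, which the paper handles implicitly by the very same projection onto the $\CC_{\ng}(G)$-coordinates.
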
 

This yields Theorem~\ref{T:DirectProduct} from the Introduction.

\begin{proof}
The map $\phi$ is well defined because:
\begin{itemize}
\item the defining relations for the generators $e_a$ in $G^{\Deh}$ are the same as for the corresponding generators in $A(G)$;
\item the $t_{\OO}$'s all lie in $\Ker(\pi)$, and are thus central in $A(G)$%
\end{itemize}

It is surjective since its image contains all the generators of $A(G)$:
\begin{itemize}
\item the $e_a$'s with $a \in \Gamma$ as the images of the corresponding $e_a$'s from $G^{\Deh}$;
\item $e_a$ for a representative $a$ of each $\OO \in \CC_{\ng}$, constructed from $\phi(c_{\OO}) = t_{\OO}$ and from the $e_{a_i}$'s with $a_i \in \Gamma$;
\item all other $e_a$'s as conjugates of the ones describes above, via \eqref{E:ToRepresentatives}.
\end{itemize}

To show the injectivity, pick some $(g,x)\in \Ker(\phi)$. Applying to $\phi(g,x)$ the $\varepsilon_{\OO}$'s with $\OO \in \CC_{\ng}(G)$, one sees that the corresponding components of $x \in \Z^{\CC_{\ng}(G)}$ must be trivial, hence $x=0$. But then, composing $\phi$ with $\pi \times \Ab$, we see that $(\pi \times \Abf)(g)$ is trivial. By Theorem \ref{T:InjForDehn}, this yields the triviality of $g$.
\end{proof}

\medskip
An immediate consequence of the theorem is the (non-obvious!) group inclusion
\begin{align}
\xi \colon \As(\Gamma^G) = G^{\Deh} & \hookrightarrow A(G),\label{E:GDinAG}\\
e_a &\mapsto e_a.\notag
\end{align}

Since the factor $\Z^{\CC_{\ng}(G)}$ is abelian, this implies

\begin{crl}\label{C:Derived2}
Let $G$ be a \cb-group. The map $\xi$ restricts to an isomorphism of the derived subgroups:
\[(G^{\Deh})' \ \cong\ A(G)' \ \cong\ G'.\]
\end{crl}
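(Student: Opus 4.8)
The plan is to exploit the fact that the isomorphism $\phi$ from Theorem~\ref{T:DirectProductDetailed} sends the $G^{\Deh}$-factor identically onto the subgroup $\xi(G^{\Deh}) \subseteq A(G)$ generated by the $e_a$ with $a \in \Gamma$, with the complementary factor $\Z^{\CC_{\ng}(G)}$ being abelian and central. Since $\xi$ is an injective group homomorphism, it carries $(G^{\Deh})'$ isomorphically onto its image $\xi\bigl((G^{\Deh})'\bigr)$; the content of the corollary is that this image is exactly $A(G)'$. So the first step is to reduce the statement to the claim $\xi\bigl((G^{\Deh})'\bigr) = A(G)'$, after which the chain $(G^{\Deh})' \cong A(G)' \cong G'$ follows by composing with the isomorphism of Corollary~\ref{C:Derived}.

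To prove $\xi\bigl((G^{\Deh})'\bigr) = A(G)'$, I would argue via the direct-product decomposition $A(G) \cong G^{\Deh} \times \Z^{\CC_{\ng}(G)}$ provided by $\phi$. For any direct product $H \times B$ with $B$ abelian, the derived subgroup is $(H \times B)' = H' \times \{0\} = H'$, since commutators in a direct product compute componentwise and all commutators in the abelian factor $B$ are trivial. Applying this with $H = G^{\Deh}$ and $B = \Z^{\CC_{\ng}(G)}$ gives $A(G)' = \phi\bigl((G^{\Deh})' \times \{0\}\bigr)$, which is precisely $\xi\bigl((G^{\Deh})'\bigr)$ because $\phi$ restricted to the first factor is $\xi$. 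This establishes the first isomorphism $(G^{\Deh})' \cong A(G)'$.

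For the second isomorphism, I would simply invoke Corollary~\ref{C:Derived}, which already states that $\pi$ restricts to an isomorphism $A(G)' \cong G'$. Composing the two isomorphisms yields $(G^{\Deh})' \cong G'$, completing the proof. It is worth noting that the composite isomorphism $(G^{\Deh})' \to G'$ is nothing but the restriction of the surjection $\pi \colon G^{\Deh} \to G$ to derived subgroups, which is consistent with the general principle (used throughout the paper via Theorem~\ref{T:Inj} and Theorem~\ref{T:InjForDehn}) that $\pi$ becomes injective once the abelianisation data is controlled, and commutators carry trivial abelianisation.

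I do not expect a serious obstacle here, as the result is essentially formal once Theorem~\ref{T:DirectProductDetailed} and Corollary~\ref{C:Derived} are in hand. The only point requiring a modicum of care is confirming that $\phi$ identifies the first direct factor with $\xi(G^{\Deh})$ compatibly, i.e.\ that $\xi = \phi \circ (\mathrm{incl})$ on the $G^{\Deh}$-factor; this is immediate from the formula $e_a \mapsto e_a$ defining both $\phi$ on the first factor and $\xi$. The mild temptation to worry about whether commutators involving the $\Z^{\CC_{\ng}(G)}$-factor could contribute to $A(G)'$ is dispelled precisely by the centrality of the $t_{\OO}$, which is recorded in the proof of Theorem~\ref{T:DirectProductDetailed}.
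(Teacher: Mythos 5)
Your proof is correct and takes essentially the same route as the paper: both arguments deduce $A(G)' = \xi\bigl((G^{\Deh})'\bigr)$ from the splitting $A(G) \cong G^{\Deh} \times \Z^{\CC_{\ng}(G)}$ of Theorem~\ref{T:DirectProductDetailed} together with the fact that the factor $\Z^{\CC_{\ng}(G)}$ is abelian, and then invoke Corollary~\ref{C:Derived} for the second isomorphism. The only difference is that you spell out the componentwise commutator computation that the paper leaves implicit.
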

We used Corollary \ref{C:Derived} for the second isomorphism. Note that the groups $G^{\Art}$ and $G$ have different derived groups in general: for instance, $(S_n)'=A_n$ is finite, while $S_n^{\Art} \cong B_n$ has no torsion.

This property is particularly useful when all generators in $\Pi$ are from the same conjugacy class:

\begin{prp}\label{P:SemiDirect}
Let $\Pi$ be a \cb-presentation of a group $G$ such that all its generators fall into the same conjugacy class. Then $G^{\Deh}$ decomposes as a semi-direct product
\[G^{\Deh}   \ \cong \ G' \rtimes \Z,\]
where $1 \in \Z$ acts on $G'$ by conjugation by a chosen generator $a$ of $\Pi$.
\end{prp}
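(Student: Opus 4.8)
The plan is to realise $G^{\Deh}$ as a split extension of $\Z$ by its derived subgroup, and then to identify the pieces and the action using the results on derived groups already proved.

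First I would exploit the hypothesis that all generators of $\Pi$ lie in a single conjugacy class: this says exactly that $\CC_{\gg}(G)$ is a singleton, so the Dehn quandle $\Gamma^G$ is a single $\Inn$-orbit. Consequently the abelianisation map of $G^{\Deh}$, which lands in $\Z^{\CC_{\gg}(G)}$, takes values in $\Z$ and coincides with the degree map $\varepsilon$ sending every generator $e_b$ to $1$. Since the kernel of an abelianisation map is the derived subgroup, this produces the short exact sequence
\[1 \ \to \ (G^{\Deh})' \ \to \ G^{\Deh} \ \overset{\varepsilon}{\to} \ \Z \ \to \ 1.\]

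Next, because $\Z$ is free, this sequence splits. Concretely, fixing a generator $a \in \Gamma$, the element $e_a$ satisfies $\varepsilon(e_a^{\,n}) = n$, so $\langle e_a \rangle \cong \Z$ is a section of $\varepsilon$. This gives
\[G^{\Deh} \ \cong \ (G^{\Deh})' \rtimes \langle e_a \rangle \ \cong \ (G^{\Deh})' \rtimes \Z,\]
where $1 \in \Z$ acts on the normal subgroup $(G^{\Deh})'$ by conjugation by $e_a$. Finally I would transport everything along $\pi$: by Corollaries \ref{C:Derived} and \ref{C:Derived2} the map $\pi$ restricts to an isomorphism $(G^{\Deh})' \overset{\sim}{\to} G'$, and under it the conjugation action of $e_a$ becomes $\pi(e_a h e_a^{-1}) = a\,\pi(h)\,a^{-1}$ for $h \in (G^{\Deh})'$, i.e.\ conjugation by $a$ on $G'$. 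This yields the asserted isomorphism $G^{\Deh} \cong G' \rtimes \Z$ with $1 \in \Z$ acting by conjugation by $a$.

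I do not expect a serious obstacle here. The only point demanding care is the identification $\Ab(G^{\Deh}) \cong \Z$ with kernel exactly $(G^{\Deh})'$: this is where the single-conjugacy-class hypothesis is essential, as it is what forces $\Gamma^G$ to be connected and hence the abelianisation to be of rank one rather than higher. Once this short exact sequence is in hand, the splitting is automatic from freeness of $\Z$, and the identification of the action is immediate from the compatibility of $\pi$ with conjugation.
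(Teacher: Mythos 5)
Your proposal is correct and follows essentially the same route as the paper's proof: both extract the short exact sequence $(G^{\Deh})' \to G^{\Deh} \to \Z$ from the abelianisation (rank one precisely because all generators are conjugate), split it with the section $k \mapsto e_a^k$, and transport the resulting semi-direct product along the derived-subgroup isomorphism $(G^{\Deh})' \cong G'$ of Corollary \ref{C:Derived2}, under which conjugation by $e_a$ becomes conjugation by $a$.
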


\begin{proof}
In the short exact sequence 
\[ 0 \ \to \ \Ker(\Ab) \ \to \ G^{\Deh} \ \overset{\Ab}{\to} \ \Z \ \to \  0,\]
one can replace $\Ker(\Ab)$ by $(G^{\Deh})'$, and then use the section
\begin{align*}
\Z &\to G^{\Deh},\\
k &\mapsto e_a^k.
\end{align*}
This yields $G^{\Deh} \cong (G^{\Deh})' \rtimes \Z$, with $k \in \Z$ acting on $(G^{\Deh})'$  by conjugation by $e_a^k$. Under the isomorphism $(G^{\Deh})' \cong G'$, this becomes conjugation by $a^k$. 
\end{proof}

\begin{xmp}
For $S_n$, we recover the decomposition
\[ S_n^{\Deh} \ \cong \ A_n \rtimes \Z,\]
where $1 \in \Z$ acts on $A_n$ by conjugation by, say, the generator $\sigma_1$.
\end{xmp}

\medskip
Finally, recall that for a \cc-group $G$, the group $G^{\Deh}$ is simply $G$ itself. This gives
\begin{crl}\label{C:DirectProductCGroup}
For any \cc-group $G$, the structure group $A(G)$ splits as 
\[A(G) \ \cong \ G \times \Z^{\CC_{\ng}(G)}.\]
\end{crl}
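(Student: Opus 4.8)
The plan is to specialize Theorem~\ref{T:DirectProduct} to the case of a \cc-group, so almost all the work has already been done; the only substantive point is to recognise that $G^{\Deh} \cong G$ in this setting. First I would observe that, since a \cc-group admits a presentation $\Pi$ with only conjugation relations, it has no power relations at all. Consequently no generator is forced to have finite order by $\Pi$, so the set $\CC_{\fg}(G)$ of conjugacy classes containing finite-order generators is empty.

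Next I would identify $G^{\Deh}$ with $G$. By Theorem~\ref{T:DehnGenRel}, a presentation of $G^{\Deh}$ is obtained from $\Pi$ by keeping the conjugation relations (rewritten as $e_a \op e_b = e_c$) and adjoining a centrality relation $e_a^k e_b = e_b e_a^k$ for each power relation $a^k=1$ of $\Pi$. Since $\Pi$ has no power relations, this second family of relations is empty, and the resulting presentation of $G^{\Deh}$ coincides with $\Pi$ under the correspondence $e_a \leftrightarrow a$. Hence $G^{\Deh} \cong G$. Equivalently, this is the collapse of the chain $G^{\Art} \twoheadrightarrow G^{\Deh} \twoheadrightarrow G$ from Theorem~\ref{T:DehnAsIntermediate} when there are no power relations, and it is exactly the first assertion of Theorem~\ref{T:CGroupIFF}.

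Finally I would substitute $G^{\Deh} \cong G$ into the isomorphism $A(G) \cong G^{\Deh} \times \Z^{\CC_{\ng}(G)}$ furnished by Theorem~\ref{T:DirectProduct}, obtaining the desired splitting $A(G) \cong G \times \Z^{\CC_{\ng}(G)}$. Since both ingredients---the general direct-product decomposition and the identification $G^{\Deh} \cong G$---are already established, there is no serious obstacle here. The only point requiring a little care is the bookkeeping that the absence of power relations in $\Pi$ simultaneously empties the type~(2) relations of $\Pi^{\Deh}$ and the class set $\CC_{\fg}(G)$, so that Theorem~\ref{T:DirectProduct} specializes cleanly and no stray finite-order factor survives.
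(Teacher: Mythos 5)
Your proof is correct and takes essentially the same route as the paper: the paper also obtains this corollary by recalling that $G^{\Deh}\cong G$ for a \cc-group (since, with no power relations, the presentation of Theorem~\ref{T:DehnGenRel} reduces to $\Pi$ itself, which is the first part of Theorem~\ref{T:CGroupIFF}) and then substituting this into the splitting $A(G)\cong G^{\Deh}\times\Z^{\CC_{\ng}(G)}$ of Theorem~\ref{T:DirectProduct}.
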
 
In other words, $\As(\Conj((G))$ is only a trivial extension of $G$.

\section{Structure group $A(G)$ as an extension of $G$}\label{Sec:Ext}

Let us now focus on the map $\pi$ connecting the structure group $A(G)$ to the original group $G$, which we suppose to be arbitrary (not necessarily of \cb-type) for the moment. The difference between the two groups is measured by the central (hence abelian) subgroup $K(G):=\Ker(\pi)$ of $A(G)$, which is hard to compute in general. 

The map $\pi$ has an obvious \emph{set-theoretic section}
\begin{align*}
s \colon G &\to A(G),\\
 a &\mapsto e_a.
\end{align*}  
It is not a group morphism: $e_ae_b$ and $e_{ab}$ always have different degrees. It is classical to study the \emph{``morphicity defect''} of such a section, which is a map
\begin{align*}
\varphi \colon G \times G &\to K(G),\\
 (a,b) &\mapsto e_{ab}^{-1}e_ae_b.
\end{align*}
Such maps govern extensions of $G$ by $K(G)$. This is written as
\[A(G) \ \cong \ G \underset{\varphi}{\times} K(G).\]

\begin{prp}\label{P:2cocycle}
The above map $\varphi$ is a symmetric equivariant group $2$-cocycle. That is, it satisfies the following properties for all $a,b,c \in G$:
\begin{align}
& \varphi(b,c)-\varphi(ab,c)+\varphi(a,bc)-\varphi(a,b)=0,\label{E:2cocycle}\\
& \varphi(a,1)=e_1,\label{E:Norm}\\
& \varphi(a \op c,b \op c)=\varphi(a,b),\label{E:Equivar}\\
& \varphi(a,b) = \varphi(b,a). \label{E:Sym}
\end{align}
\end{prp}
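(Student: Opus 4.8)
The plan is to derive all four identities by working directly with the defining relation of $A(G)$, recast as a \emph{twisted multiplication rule} $e_ae_b=\varphi(a,b)\,e_{ab}$, together with two facts I may take for granted from the surrounding text: that $\varphi$ takes values in $K(G)=\Ker(\pi)$, which is immediate since $\pi(e_{ab}^{-1}e_ae_b)=(ab)^{-1}ab=1$; and that $K(G)$ is \emph{central} in $A(G)$. Centrality is the engine of the whole argument, as it lets me slide any factor $\varphi(\cdot,\cdot)$ freely past generators. I would first record the conjugation formula $e_{x\op c}=e_c^{-1}e_xe_c$, read straight off $e_xe_c=e_ce_{x\op c}$, since it is needed twice below.

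I would then dispose of the two routine identities. Normalisation \eqref{E:Norm} is a one-line computation $\varphi(a,1)=e_a^{-1}e_ae_1=e_1$. For the cocycle identity \eqref{E:2cocycle} I would expand $e_ae_be_c$ in the two ways permitted by associativity, using $e_xe_y=\varphi(x,y)e_{xy}$ and pulling the central $\varphi$-factors to the left; this yields $\varphi(a,b)\varphi(ab,c)=\varphi(b,c)\varphi(a,bc)$, which is exactly \eqref{E:2cocycle} after rearranging in the additive notation of the abelian group $K(G)$. This is nothing but the standard associativity-of-the-section computation for the morphicity defect of any set-section of a surjection with central kernel, so it carries no quandle-specific content.

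Equivariance \eqref{E:Equivar} is where the quandle structure enters, but it remains a clean telescoping. Since $(ab)\op c=(a\op c)(b\op c)$ in $G$, I would substitute the conjugation formula into $\varphi(a\op c,b\op c)=e_{(ab)\op c}^{-1}e_{a\op c}e_{b\op c}$; the inner factors $e_c^{\pm1}$ cancel in pairs, leaving $e_c^{-1}\varphi(a,b)e_c$, which is just $\varphi(a,b)$ by centrality.

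The one identity that looks genuinely surprising, and which I expect to be the main obstacle, is symmetry \eqref{E:Sym}, because $\varphi(a,b)$ and $\varphi(b,a)$ are built from the \emph{different} elements $ab$ and $ba$. The trick I would use is that these are conjugate: $ba=a^{-1}(ab)a=(ab)\op a$, so the conjugation formula gives $e_{ba}=e_a^{-1}e_{ab}e_a$, whence $\varphi(b,a)=e_a^{-1}e_{ab}^{-1}e_ae_be_a$. This is a central element, so conjugating it by $e_a$ changes nothing, yet the conjugation visibly collapses the expression to $e_{ab}^{-1}e_ae_b=\varphi(a,b)$; hence $\varphi(b,a)=\varphi(a,b)$. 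For a \cb-group there is an even quicker route worth mentioning: both values lie in $\Ker(\pi)$, and $\Ab(\varphi(a,b))=\Ab(e_a)+\Ab(e_b)-\Ab(e_{ab})$ depends on $ab$ only through its conjugacy class, which coincides with that of $ba$; so Theorem~\ref{T:Inj} forces equality. I would present the first, fully general argument and flag the second as the shortcut in the main case of interest.
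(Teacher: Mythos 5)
Your proposal is correct and follows essentially the same route as the paper: each identity is extracted from the twisted multiplication rule $e_ae_b=\varphi(a,b)\,e_{ab}$ together with the centrality of $K(G)=\Ker(\pi)$, and your computations for \eqref{E:2cocycle}, \eqref{E:Norm} and \eqref{E:Equivar} coincide with the paper's. The only (cosmetic) divergence is the symmetry step: the paper deduces \eqref{E:Sym} from the already-established equivariance \eqref{E:Equivar} combined with quandle idempotence, writing $\varphi(b,a)=\varphi(a\op a,b\op a)=\varphi(a,b)$, whereas you conjugate the central element $\varphi(b,a)$ by $e_a$ directly (and your added $\pi\times\Ab$ shortcut, valid only in the \cb-case, is correctly flagged as such)---the same underlying mechanism in both cases.
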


Thus in principle the knowledge of the second group cohomology of $G$ could shed light on the structure group $A(G)$.

\begin{proof}
The proof of the $2$-cocycle condition \eqref{E:2cocycle} is classical:
\begin{align*}
\varphi(a,bc) + \varphi(b,c) &= e_{a(bc)}^{-1}e_ae_{bc} e_{bc}^{-1}e_be_c = e_{abc}^{-1}e_ae_be_c,\\
\varphi(ab,c)+\varphi(a,b)  &= e_{(ab)c}^{-1}e_{ab}e_{c} e_{ab}^{-1}e_ae_b = e_{abc}^{-1}e_{ab}e_{ab}^{-1}e_ae_be_{c}  = e_{abc}^{-1}e_ae_be_c,
\end{align*}
where we used the centrality of $e_{ab}^{-1}e_ae_b=\varphi(a,b)$. Note that we alternate between the multiplicative notation in the whole group $A(G)$, and the additive notation in the abelian subgroup $K(G)$.

The normalisation condition \eqref{E:Norm} is straightforward.

The equivariance condition \eqref{E:Equivar} follows from 
\begin{align*}
\varphi(a \op c,b \op c) &= e_{(a \op c)(b \op c)}^{-1}e_{a  \op c} e_{b \op c} = e_{(ab) \op c}^{-1}e_{a  \op c} e_{b \op c} = (e_{ab}^{-1} \op e_c)(e_a \op e_c)(e_b \op e_c)\\
&= (e_{ab}^{-1}e_ae_b) \op e_c = e_{ab}^{-1}e_ae_b,
\end{align*} 
since the central element $e_{ab}^{-1}e_ae_b$ commutes with $e_c$.

The symmetry condition \eqref{E:Sym} follows from 
\begin{align*}
\varphi(b,a) &= e_{ba}^{-1}e_be_a = e_{a (b \op a)}^{-1}e_a(e_b \op e_a) = e_{(a \op a) (b \op a)}^{-1} e_{a \op a} e_{b \op a} \\
& = \varphi(a \op a,b \op a) = \varphi(a,b).
\end{align*} 
We used the equivariance condition \eqref{E:Equivar}.
\end{proof}

It is remarkable that, in some sense, the $2$-cocycle $\varphi$ cannot be too trivial:

\begin{prp}\label{P:2cocycleImage}
The image of $\varphi$ generates the whole abelian group $K(G)$. 
\end{prp}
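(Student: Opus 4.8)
The plan is to show that the subgroup $H$ of $K(G)$ generated by the image of $\varphi$ is in fact all of $K(G)$. Since $K(G) = \Ker(\pi)$ consists of those elements of $A(G)$ killed by $\pi$, I would take an arbitrary element $g \in K(G)$, write it as a word in the generators $e_a^{\pm 1}$, and try to rewrite it using the cocycle values as building blocks.

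First I would observe that the defining relation $\varphi(a,b) = e_{ab}^{-1}e_a e_b$ can be read as $e_a e_b = e_{ab}\,\varphi(a,b)$, so that a product of two generators is, up to a central factor from the image of $\varphi$, again a single generator $e_{ab}$. The idea is then to use this repeatedly to collapse any word $e_{a_1}^{\alpha_1}\cdots e_{a_s}^{\alpha_s}$ down to a single generator times a product of $\varphi$-values. I would first handle inverses: from $e_a e_{a^{-1}} = e_1\,\varphi(a,a^{-1})$ and the normalisation $\varphi(a,1)=e_1$ (together with $e_1$ itself, which equals $\varphi(1,1)$ and so lies in $H$), one expresses each $e_a^{-1}$ as $e_{a^{-1}}$ times a factor in $H$. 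This reduces the word to a product of positive generators $e_{b_1}\cdots e_{b_r}$ times an element of $H$.

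Next I would collapse the positive word by induction on its length: using $e_{b_1}e_{b_2} = e_{b_1 b_2}\,\varphi(b_1,b_2)$ and the centrality of the $\varphi$-values, each multiplication absorbs one generator into the product of the group elements while contributing one more factor from the image of $\varphi$. After $r-1$ such steps the word becomes $e_{b_1 b_2 \cdots b_r}$ times an element of $H$. Now apply $\pi$: since $g \in K(G)$ we have $\pi(g) = 1_G$, i.e. $b_1 \cdots b_r = 1_G$ in $G$; but $\pi$ restricted to $H$ is trivial and $\pi(e_{b_1\cdots b_r}) = b_1 \cdots b_r = 1_G$, forcing $e_{b_1\cdots b_r} = e_1$. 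As $e_1 = \varphi(1,1) \in H$, the whole of $g$ lies in $H$.

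The main obstacle to watch is bookkeeping rather than conceptual: one must be careful that every central factor introduced during the collapsing genuinely belongs to the image of $\varphi$ (not merely to $K(G)$), and that the treatment of negative exponents does not secretly use elements outside $H$. The symmetry and equivariance of $\varphi$ are not needed here; only the cocycle identity packaged as $e_a e_b = e_{ab}\varphi(a,b)$, the normalisation, and the centrality of the values of $\varphi$ are used. Provided the inverse generators are expressed cleanly in terms of $\varphi$-values and positive generators, the rewriting terminates and deposits $g$ inside the subgroup generated by $\Im(\varphi)$, which is exactly what is claimed.
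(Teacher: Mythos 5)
Your proof is correct and takes essentially the same route as the paper's: collapse a word representing $g \in K(G)$ using the relation $e_a e_b = e_{ab}\,\varphi(a,b)$ together with the centrality of the $\varphi$-values, then apply $\pi$ to force the surviving letter to be $e_1 = \varphi(1,1)$. The only cosmetic difference is that you pre-process each inverse letter $e_a^{-1}$ into $e_{a^{-1}}$ times a factor in the subgroup generated by $\Im(\varphi)$, whereas the paper absorbs signs directly into the collapsing step by allowing central factors $\varphi(a,b)^{\pm 1}$.
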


\begin{proof}
Take a word $w$ in the letters $e_a,\ a \in G$, representing some $g \in K(G)$. Using the relation
\[\varphi(a,b) e_{ab} = e_a e_b,\]
one can replace two neighbouring letters by one, at the cost of introducing a central factor $\varphi(a,b)^{\pm 1}$. Repeating this procedure, one gets one letter $e_d^{\pm 1}$ times some $\varphi$ factors. Since $\pi(g)=1$ and $\pi(\varphi(a,b))=1$, one gets $d^{\pm 1} = \pi(e_d^{\pm 1})=1$, hence $d=1$. But $e_1=\varphi(1,1)$. In total, we have presented $g$ as a product of the values of $\varphi$ and their inverses.
\end{proof}

\medskip
Let us now return to \cb-groups. Our understanding of the structure groups $A(G)$ in this case is sufficient to determine $K(G)$ completely.

\begin{prp}\label{P:Kernel}
A \cb-group $G$ fits into the exact sequence
\[0 \ \to \ \Z^{\CC_{\ng}(G)} \times \Z^{\CC_{\fg}(G)} \ \overset{\iota}{\to} \ A(G) \ \overset{\pi}{\to} \ G \ \to \ 0,\] 
where $\iota$ is the map constructed in Proposition \ref{P:ManyCentralElements}.
\end{prp}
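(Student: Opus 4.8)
The exactness at the two ends is already in hand: the map $\iota$ is injective by Proposition~\ref{P:ManyCentralElements}, and $\pi$ is surjective since every $a \in G$ equals $\pi(e_a)$. Moreover, all the generators $t_{\OO}$ of $\Im(\iota)$ were constructed to lie in $\Ker(\pi)$, by \eqref{E:T} and \eqref{E:Tng}, so $\Im(\iota) \subseteq \Ker(\pi)$. The entire content of the proposition is therefore the reverse inclusion $\Ker(\pi) \subseteq \Im(\iota)$: every element killed by $\pi$ must be a product of the $t_{\OO}$'s.

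The plan is to extract this from the decomposition already established in Theorem~\ref{T:DirectProductDetailed}. Under the isomorphism $\phi \colon G^{\Deh} \times \Z^{\CC_{\ng}(G)} \overset{\sim}{\to} A(G)$, the map $\pi$ pulls back to the map that is $\pi$ on the factor $G^{\Deh}$ and trivial on $\Z^{\CC_{\ng}(G)}$, the latter because $\phi$ sends each generator $c_{\OO}$ to $t_{\OO} \in \Ker(\pi)$. Hence $\phi^{-1}(\Ker(\pi)) = \Ker(\pi \colon G^{\Deh} \to G) \times \Z^{\CC_{\ng}(G)}$, and it remains only to identify $\Ker(\pi \colon G^{\Deh} \to G)$. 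For this I would invoke the pullback description of Proposition~\ref{P:PullbackForDehn}: an element of $G^{\Deh} \cong G \times_{\Gf} \Z^{\CC_{\fg}(G)}$ lies in $\Ker(\pi)$ exactly when it has the form $(1_G, x)$ with $\opi(x) = 0$, and since $\opi \colon \Z^{\CC_{\fg}(G)} \to \Gf$ is the coordinatewise surjection $\Z \twoheadrightarrow \Z_{k(\OO)}$, its kernel is the free abelian group $\bigoplus_{\OO \in \CC_{\fg}(G)} k(\OO)\Z \cong \Z^{\CC_{\fg}(G)}$, spanned by the vectors $k(\OO)\,c_{\OO}$. Tracing these back through the isomorphism $\pi \times \Abf$ of Theorem~\ref{T:InjForDehn}, the element $g$ with $\pi(g)=1_G$ and $\Abf(g) = k(\OO)\,c_{\OO}$ is forced to be $e_{a_{\OO}}^{k(\OO)} = t_{\OO}$, so $\Ker(\pi \colon G^{\Deh} \to G)$ is precisely the free abelian group on the $t_{\OO}$, $\OO \in \CC_{\fg}(G)$.

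Assembling the two factors, $\Ker(\pi)$ is freely generated by the $t_{\OO}$ with $\OO \in \CC_{\fg}(G) \sqcup \CC_{\ng}(G)$, which is exactly $\Im(\iota)$ with the free abelian structure already recorded in Proposition~\ref{P:ManyCentralElements}; the freeness is thus automatic, having been checked in the abelianisation there. The only delicate point — and the step I would write out most carefully — is the bookkeeping that matches the abstract kernel generators produced by the pullbacks with the concrete elements defined in \eqref{E:T} and \eqref{E:Tng}. A more self-contained alternative would read $\Ker(\pi)$ directly off Theorem~\ref{T:Pullback} as $\Ker(\opi) \subseteq \Z^{\CC(G)}$; there, however, $\opi$ sends each $c_{\OO}$ with $\OO \in \CC_{\ng}(G)$ to the possibly nonzero class $\Ab(a_{\OO})$, so one must first change basis by $c_{\OO} \mapsto \Ab(t_{\OO})$ to clear these cross-terms, which amounts to re-deriving Proposition~\ref{P:ManyCentralElements} by hand. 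I would prefer the direct-product route for that reason.
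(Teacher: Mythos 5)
Your proof is correct and follows the paper's own route: both arguments pivot on the splitting $A(G) \cong G^{\Deh} \times \Z^{\CC_{\ng}(G)}$ of Theorem~\ref{T:DirectProductDetailed}, observe that the second factor is exactly $\iota(\Z^{\CC_{\ng}(G)})$, and then identify $\Ker(\pi \colon G^{\Deh} \to G)$ with the free abelian group on the $t_{\OO}$, $\OO \in \CC_{\fg}(G)$. The only difference is in that last identification, and it is minor: the paper simply cites the exact sequence \eqref{E:GDvsG} of Theorem~\ref{T:DehnAsIntermediate}, whereas you re-derive the same fact from the pullback description of Proposition~\ref{P:PullbackForDehn} together with the injectivity of $\pi \times \Abf$ from Theorem~\ref{T:InjForDehn} --- slightly longer, but equally valid.
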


Hence $K(G)$ is the free abelian group $\Z^{\CC_{\ng}(G)} \times \Z^{\CC_{\fg}(G)}$.

\begin{proof}
Proposition \ref{P:ManyCentralElements} guarantees that $\iota$ is injective, and its image lies in $\Ker(\pi)$. It remains to explain why $\Im(\iota)$ is the whole $\Ker(\pi)$. By Theorem \ref{T:DirectProductDetailed}, $A(G)$ splits as $G^{\Deh} \times \Z^{\CC_{\ng}(G)}$, where $\Z^{\CC_{\ng}(G)}$ is the subgroup generated by the $t_{\OO}$'s with $\OO \in \CC_{\ng}(G)$. But this is precisely $\iota(\Z^{\CC_{\ng}(G)})$. Further, $\iota(\Z^{\CC_{\fg}(G)})$ becomes the subgroup of $G^{\Deh}$ generated by appropriate powers of its generators, which is precisely the kernel of the map $\ G^{\Deh} \twoheadrightarrow \ G$ as described in Theorem \ref{T:DehnAsIntermediate}.
\end{proof}

\begin{xmp}
For $G=S_n$, we get a short exact sequence
\[0 \ \to \ \Z^{P(n)} =\Z^{P(n)-1} \times \Z \ \overset{\iota}{\to} \ A(S_n) \ \overset{\pi}{\to} \ S_n \ \to \ 0,\] 
and the $2$-cocycle $\varphi$ can be computed as follows. Given two permutations $\alpha, \beta \in S_n$, the $\Z^{P(n)-1}$ part of $\varphi(\alpha, \beta)$ is $-c_{\OO(\alpha\beta)}+c_{\OO(\alpha)}+c_{\OO(\beta)}$, where $\OO(\gamma)$ is the conjugacy class of $\gamma \in S_n$, and $c_{\OO}$ is the generator of $\Z^{P(n)-1}=\Z^{\CC_{\ng}(G)}$ corresponding to the class $\OO$ if $\OO \in \CC_{\ng}(G)$, and $0$ otherwise. The $\Z$-part is given by the map
\begin{align*}
\varphi' \colon \hspace*{1cm} S_n^2 &\to \Z,\\
(\alpha,\beta) &\mapsto (-\lg(\alpha\beta)+\lg(\alpha)+\lg(\beta))/2,
\end{align*} 
where $\lg$ is the minimal number of transpositions needed to write a permutation. For instance, for two transpositions $\sigma, \tau$, one has
\[\varphi'(\sigma, \tau)= \begin{cases}  0 & \text { if } \sigma \neq \tau,\\ 1 & \text { if } \sigma = \tau. \end{cases} \]
\end{xmp}

\medskip
For a \cc-group, the map $\pi$ also admits a group-theoretic section $\sigma$, which sends any generator $a \in \Gamma$ to $e_a$, and a general $a \in G$, written in terms of the generators $a_i$ as $a_1^{\alpha_1}\cdots a_s^{\alpha_s}$, to $e_{a_1}^{\alpha_1}\cdots e_{a_s}^{\alpha_s}$. This is a well defined group morphism. Also, $\CC_{\fg}(G)$ is empty for such $G$. We are thus left with the split sequence
\[0 \ \to \ \Z^{\CC_{\ng}(G)} \ \overset{\iota}{\to} \ A(G) \ \overset{\pi}{\underset{\sigma}{\rightleftarrows}} \ G \ \to \ 0,\] 
which recovers the splitting $A(G)\cong G \times \Z^{\CC_{\ng}(G)}$ from Corollary \ref{C:DirectProductCGroup}. In this case the $2$-cocycle $\varphi$ must be a coboundary. Indeed:

\begin{prp}\label{P:Coboundary}
For a \cc-group, one has $\varphi = d f$, for the map
\begin{align*}
f\colon G &\to K(G),\\
 a &\mapsto \sigma(a)^{-1} e_a.
\end{align*}
\end{prp}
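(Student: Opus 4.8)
We need to show that for a $\overline{C}$-group $G$, the 2-cocycle $\varphi(a,b) = e_{ab}^{-1}e_a e_b$ equals the coboundary $df$ of the 1-cochain $f(a) = \sigma(a)^{-1}e_a$, where $\sigma$ is the group-theoretic section. The standard coboundary formula for a 1-cochain $f\colon G \to K(G)$ into a trivial (central) module is $(df)(a,b) = f(a) - f(ab) + f(b)$ in additive notation. So the plan is simply to compute this expression and check it equals $\varphi(a,b)$.

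**The approach.** First I would translate everything into the multiplicative group $A(G)$, remembering that $K(G)$ is central, so additive notation in $K(G)$ corresponds to multiplication of commuting elements in $A(G)$, and the sign conventions are: $-f(ab)$ means $f(ab)^{-1}$. Thus I want to verify
\[
\varphi(a,b) \ = \ f(a)\, f(ab)^{-1}\, f(b)
\]
as elements of $K(G)$. Substituting the definition $f(x) = \sigma(x)^{-1}e_x$ and using that $\sigma$ is a genuine group morphism (so $\sigma(ab) = \sigma(a)\sigma(b)$ and $\sigma(ab)^{-1} = \sigma(b)^{-1}\sigma(a)^{-1}$), the right-hand side becomes
\[
\sigma(a)^{-1}e_a \cdot e_{ab}^{-1}\sigma(ab) \cdot \sigma(b)^{-1}e_b.
\]

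**Key step.** The crux is to rearrange this product. Since all of $f(a), f(ab), f(b)$ lie in the central subgroup $K(G)$, the three factors commute with each other and with everything, so I may reorder freely. The cleanest bookkeeping is to collect the $\sigma$-terms and the $e$-terms. Using $\sigma(ab) = \sigma(a)\sigma(b)$, the $\sigma$-part contributes $\sigma(a)^{-1}\sigma(a)\sigma(b)\sigma(b)^{-1} = 1$ once the central factors are commuted into place, leaving exactly $e_a e_{ab}^{-1}e_b$, which (again using centrality to move $e_{ab}^{-1}$ to the front) equals $e_{ab}^{-1}e_a e_b = \varphi(a,b)$. I expect the only genuine care needed is to justify each reordering by appealing to the centrality of the $f(x)$-factors in $K(G)=\Ker(\pi)$, rather than trying to commute the individual $\sigma$'s and $e$'s past one another (which they do not do in general). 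So the main obstacle — really a bookkeeping subtlety rather than a difficulty — is handling the non-commutativity inside $A(G)$ correctly: one must treat $f(a)$, $f(ab)^{-1}$, $f(b)$ as indivisible central blocks and only reorder at that granularity.

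**Packaging.** Finally I would confirm that $f$ indeed lands in $K(G)$: since $\sigma$ is a section of $\pi$, one has $\pi(f(a)) = \pi(\sigma(a))^{-1}\pi(e_a) = a^{-1}a = 1$, so $f(a) \in \Ker(\pi) = K(G)$, making the coboundary formula well-typed. With that verification in place, the direct computation above completes the proof. I do not anticipate any serious obstacle here; this is a routine coboundary identity whose content is entirely the observation that a group-theoretic splitting of $\pi$ exactly trivialises the morphicity-defect cocycle.
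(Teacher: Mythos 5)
Your framework coincides with the paper's: verify that $f$ lands in $K(G)=\Ker(\pi)$ (your computation $\pi(f(a))=a^{-1}a=1$ is exactly the paper's), then check $\varphi(a,b)=f(a)f(ab)^{-1}f(b)$ directly, using that $\sigma$ is a group morphism and that $K(G)$ is central. But the key step as you describe it is incorrect. ``Collecting the $\sigma$-terms and the $e$-terms'' so that the $\sigma$-part cancels to $1$ and leaves $e_ae_{ab}^{-1}e_b$ is not a reordering of indivisible central blocks --- it is precisely the letter-by-letter commuting you said you would avoid --- and its output is wrong: $\pi(e_ae_{ab}^{-1}e_b)=a(ab)^{-1}b=ab^{-1}a^{-1}b$, a commutator, which is nontrivial whenever $a$ and $b$ do not commute (take $a,b$ to be the free generators of a free group, the simplest conjugation group). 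So $e_ae_{ab}^{-1}e_b$ does not even lie in the central subgroup $K(G)$, hence cannot equal $df(a,b)$. The patch you then apply is invalid for the same reason: $e_{ab}^{-1}$ is not central (by Corollary \ref{C:Center} it is central only when $ab\in Z(G)$), and by Corollary \ref{C:Comm} the letters $e_a$ and $e_{ab}^{-1}$ commute only when $a$ and $ab$ commute in $G$, i.e.\ only when $ab=ba$. Your two steps are thus each false exactly when $a$ and $b$ fail to commute; the two errors compensate, which is why you land on the correct answer.

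The correct execution --- which is the paper's proof --- keeps the block discipline you yourself announced: starting from
\[f(a)\,f(ab)^{-1}\,f(b) \ = \ \sigma(a)^{-1}e_a\cdot e_{ab}^{-1}\sigma(ab)\cdot\sigma(b)^{-1}e_b,\]
first collapse the adjacent letters $\sigma(ab)\sigma(b)^{-1}=\sigma(a)$, and then move the single central block $f(a)=\sigma(a)^{-1}e_a$ rightward past $e_{ab}^{-1}\sigma(a)$ and no further. The factors $\sigma(a)\sigma(a)^{-1}$ then cancel and one lands directly on $e_{ab}^{-1}e_ae_b=\varphi(a,b)$, already in the correct order, with no residual reordering of the $e$'s needed. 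The moral is that the $\sigma$'s do all cancel, but only if you choose which central block to transport and where to insert it; an unconstrained ``separate the $\sigma$'s from the $e$'s'' produces a genuinely different, non-central element.
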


\begin{proof}
The map $f$ indeed takes values in $K(G)$, since
\[ \pi(f(a)) = \pi(\sigma(a)^{-1}e_a) = \pi(\sigma(a))^{-1}\pi(e_a) = a^{-1} a = 1.\]
Further,
\begin{align*}
df(a,b)&=f(a)-f(ab)+f(b) = \sigma(a)^{-1} e_a e_{ab}^{-1} \sigma(ab) \sigma(b)^{-1} e_b = \sigma(a)^{-1} e_a e_{ab}^{-1} \sigma(a) e_b \\
& = e_{ab}^{-1} \sigma(a) \sigma(a)^{-1} e_a e_b = e_{ab}^{-1} e_a e_b = \varphi(a,b).
\end{align*}
We used the fact that $\sigma$ is a group morphism, and the centrality of $\sigma(a)^{-1} e_a$.
\end{proof}

\section{Homology}\label{Sec:Hom}

In this section, we will explain how our understanding of the group $A(G)$ can be used, via Eisermann's formula, to compute the second homology group of the conjugation quandle $\Conj(G)$ of a \cb-group $G$. As an illustration, we provide an explicit computation of $H_2(\Conj(S_n))$.

\begin{prp}\label{P:H2Cgroup}
Let $G$ be a \cb-group, $\OO$ its conjugacy class, $a_{\OO} \in \OO$ any representative of this class, and $\OO_1 = \{ 1_G \}$ the class of $1_G$. The corresponding component of formula \eqref{E:Eisermann} then splits as follows:
\begin{equation}\label{E:StabAGvsDehn}
\left(\Stab_{A(G)}(a_{\OO}) \cap \Ker (\varepsilon)\right)_{Ab} \ \cong \left(\Stab_{G^{\Deh}}(a_{\OO})\right)_{Ab} \ \times \ \Z^{\CC_{\ng}(G) \setminus \{\OO_1\}}.
\end{equation}
Further, the map $\pi \colon G^{\Deh}\twoheadrightarrow G$  induces the short exact sequence
\[0 \ \to \ \Z^{\CC_{\fg}(G)} \ \to \ \left(\Stab_{G^{\Deh}}(a_{\OO})\right)_{Ab}  \ \to \ \left(\Stab_{G}(a_{\OO})\right)_{Ab}  \ \to \ 0.\] 
\end{prp}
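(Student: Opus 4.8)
The plan is to handle the two displayed assertions separately, reducing each to the structural results already in place. Throughout, recall that $A(G)$ acts on $\Conj(G)$ (with underlying set $G$) by $a\cdot g = \pi(g)^{-1}a\,\pi(g)$, so that $g$ fixes $a$ exactly when $\pi(g)$ centralises $a$. Hence $\Stab_{A(G)}(a_{\OO})=\pi^{-1}(\Stab_G(a_{\OO}))$, and likewise $\Stab_{G^{\Deh}}(a_{\OO})=\pi^{-1}(\Stab_G(a_{\OO}))$ for the action of $G^{\Deh}$ through $\pi\colon G^{\Deh}\twoheadrightarrow G$. I will use these identifications freely.

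For the isomorphism \eqref{E:StabAGvsDehn}, I would start from the splitting $A(G)\cong G^{\Deh}\times\Z^{\CC_{\ng}(G)}$ of Theorem \ref{T:DirectProductDetailed}. Since the factor $\Z^{\CC_{\ng}(G)}$ lies in $\Ker(\pi)$, it acts trivially on $\Conj(G)$ and so lies in every stabiliser; thus $\Stab_{A(G)}(a_{\OO})=\Stab_{G^{\Deh}}(a_{\OO})\times\Z^{\CC_{\ng}(G)}$. It then remains to intersect with $\Ker(\varepsilon)$. The key observation is that the basis element of $\Z^{\CC_{\ng}(G)}$ attached to $\OO_1=\{1_G\}$ may be chosen to be $t_{\OO_1}=e_{1_G}$ (taking the empty word for $1_G$ in \eqref{E:Tng}); this element is central and satisfies $\varepsilon(t_{\OO_1})=1$. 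Writing $H=\Stab_{A(G)}(a_{\OO})$, the map $\psi\colon H\to H$, $h\mapsto h\,t_{\OO_1}^{-\varepsilon(h)}$, is a group homomorphism because $t_{\OO_1}$ is central and $\varepsilon$ is additive, and it is a retraction of $H$ onto $H\cap\Ker(\varepsilon)$ killing $\langle t_{\OO_1}\rangle$. Restricting $\psi$ to the subgroup $\Stab_{G^{\Deh}}(a_{\OO})\times\Z^{\CC_{\ng}(G)\setminus\{\OO_1\}}$ yields an isomorphism onto $H\cap\Ker(\varepsilon)$, since this subgroup meets $\langle t_{\OO_1}\rangle$ trivially yet together with it generates all of $H$. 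Abelianising, and using that abelianisation commutes with direct products, gives \eqref{E:StabAGvsDehn}.

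For the short exact sequence, I would restrict the central extension $0\to\Z^{\CC_{\fg}(G)}\to G^{\Deh}\overset{\pi}{\to}G\to 0$ of Theorem \ref{T:DehnAsIntermediate} over the subgroup $\Stab_G(a_{\OO})\le G$. Setting $S:=\Stab_{G^{\Deh}}(a_{\OO})=\pi^{-1}(\Stab_G(a_{\OO}))$ and $N:=\Z^{\CC_{\fg}(G)}$, this produces $0\to N\to S\overset{\pi}{\to}\Stab_G(a_{\OO})\to 0$ with $N$ central in $S$. Abelianisation turns the surjection $S\to\Stab_G(a_{\OO})$ into a surjection $S_{Ab}\to(\Stab_G(a_{\OO}))_{Ab}$ whose kernel is the image of $N$ in $S_{Ab}$, namely $N/(N\cap S')$.

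The one genuinely non-formal point — and the step I expect to be the main obstacle — is showing that $N$ injects into $S_{Ab}$, i.e. that $N\cap S'=\{1\}$. Here I would invoke the map $\Abf\colon G^{\Deh}\to\Z^{\CC_{\fg}(G)}$ of Theorem \ref{T:InjForDehn}: by Proposition \ref{P:ManyCentralElements} it sends $t_{\OO}\mapsto k(\OO)c_{\OO}$, so $\Abf$ is injective on $N$; on the other hand, being a homomorphism to an abelian group, $\Abf$ factors through $(G^{\Deh})_{Ab}$ and therefore kills $S'\le(G^{\Deh})'$. Any $x\in N\cap S'$ then satisfies $\Abf(x)=0$, forcing $x=1$. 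This identifies $N/(N\cap S')$ with $N=\Z^{\CC_{\fg}(G)}$ and establishes the asserted exact sequence.
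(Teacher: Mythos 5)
Your proof is correct and takes essentially the same route as the paper's: the splitting from Theorem \ref{T:DirectProductDetailed} with $\Z^{\CC_{\ng}(G)}$ acting trivially, splitting off the central factor $\langle e_{1_G}\rangle$ via $\varepsilon(e_{1_G})=1$, and restricting the central extension \eqref{E:GDvsG} to stabilisers, with injectivity after abelianisation coming from $t_{\OO}\mapsto k(\OO)c_{\OO}$. Your explicit retraction $\psi$ and the formulation $N\cap S'=\{1\}$ via $\Abf$ are just more detailed phrasings of the paper's ``mod out $e_1$'' step and its composition with the map to $(G^{\Deh})_{Ab}$.
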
  

Thus, to determine $H_2(\Conj(G))$, one can compute stabilisers in the initial group $G$, lift them to $G^{\Deh}$ via the set-theoretic section $a \mapsto e_a$, and establish all relations among these lifts and the free part $\Z^{\CC_{\fg}(G)}$ in the abelian group $\left(\Stab_{G^{\Deh}}(a_{\OO})\right)_{Ab}$.

\begin{proof}
Recall first that both $G(A)$ and $G^{\Deh}$ act on $G$ on the right via $a \cdot e_b = a \op b$, while $G$ acts by conjugation: $a \cdot b = a \op b = b^{-1}ab$. In particular, for $g \in A(G)$ or $G^{\Deh}$, we have
\begin{equation}\label{E:actions}
a \cdot g \ = \ a \cdot \pi(g) \ = \ a \op \pi(g).
\end{equation}
As a result, the group inclusion $G^{\Deh} \hookrightarrow A(G)$ from \eqref{E:GDinAG} and the map $\pi \colon A(G) \to G$ intertwine these actions on $G$.

Now, in the splitting $A(G) \cong G^{\Deh} \times \Z^{\CC_{\ng}(G)}$ from Theorem \ref{T:DirectProduct}, the part $ \Z^{\CC_{\ng}(G)}$ lies in $\Ker(\pi)$, hence acts trivially on $G$. Consequently, 
\[\Stab_{A(G)}(a_{\OO}) \ \cong \ \Stab_{G^{\Deh}}(a_{\OO}) \times \Z^{\CC_{\ng}(G)}.\]

Next, $\Z^{\CC_{\ng}(G)}$ splits as $\Z^{\CC_{\ng}(G) \setminus \{\OO_1\}} \times \Z^{ \{\OO_1\}}$, and the last factor is generated by the central element $e_{1}$. Since $\varepsilon(e_{1})=1$, one can mod $e_{1}$ out and get the isomorphism
\[\Stab_{\As(Q)}(a_{\OO}) \cap \Ker (\varepsilon) \ \cong  \ \Stab_{G^{\Deh}}(a_{\OO}) \ \times \ \Z^{\CC_{\ng}(G) \setminus \{\OO_1\}}.\]
The second factor is central, hence survives in the abelianisation, yielding \eqref{E:StabAGvsDehn}.

We now focus on the interesting factor $\left(\Stab_{G^{\Deh}}(a_{\OO})\right)_{Ab}$. According to \eqref{E:actions}, an element $g \in G^{\Deh}$ acts trivially on $a \in G$ if and only if $\pi(g)$ and $a$ commute in $G$. In particular, $\Ker(\pi)$ acts trivially on any $a$. Thus the short exact sequence \eqref{E:GDvsG} from Theorem \ref{T:DehnAsIntermediate} induces the short exact sequence
\[0 \ \to \ \Z^{\CC_{\fg}(G)} \ \to \ \Stab_{G^{\Deh}}(a_{\OO}) \ \to \ \Stab_{G}(a_{\OO}) \ \to \ 0.\] 
Since the group $\Z^{\CC_{\fg}(G)}$ is abelian, one goes to the abelianisation and deduces the exact sequence
\[\Z^{\CC_{\fg}(G)} \ \to \ \left(\Stab_{G^{\Deh}}(a_{\OO})\right)_{Ab}  \ \to \ \left(\Stab_{G}(a_{\OO})\right)_{Ab}  \ \to \ 0.\] 
Finally, the leftmost map is injective, since when composed with the map 
\[\left(\Stab_{G^{\Deh}}(a_{\OO})\right)_{Ab} \ \to \  (G^{\Deh})_{Ab} \ \cong  \ \Z^{\CC_{\fg}(G)} \times \Z^{\CC_{\ig}(G)} \] 
induced by the inclusion $\Stab_{G^{\Deh}}(a_{\OO}) \hookrightarrow G^{\Deh}$ it yields multiplication by $k(\OO)$ on the factor corresponding to $\OO \in \CC_{\fg}(G)$, which is injective.
\end{proof}

Finally, we apply these techniques to our favourite example $G=S_n$. Some notations are needed to announce our result. For a partition $\lambda$ of $n$ (written as $\lambda \vdash n$), we call its \emph{support} the set $\Supp(\lambda)$ of sizes of its parts, and \emph{repetition support} the set $\RSupp(\lambda)$ of part sizes repeating in $\lambda$ at least twice. Put
\[\RR(\lambda) \ = \ \begin{cases} \# \RSupp(\lambda)-1& \text { if } \RSupp(\lambda) \text{ contains an odd element},\\
 \# \RSupp(\lambda) & \text{ otherwise}. \end{cases}\]
 Given an even $u \in \N$, let $s(n,u)$ be the number of partitions $\lambda \vdash n$ satisfying the following conditions:
 \begin{itemize}
 \item there is no odd $v \in \RSupp(\lambda)$;
 \item $u \in \Supp(\lambda)$;
 \item a power of $2$ dividing $u$ necessarily divides all other even $u' \in \Supp(\lambda)$;
 \item $u$ is minimal among even numbers satisfying the two preceding properties.
 \end{itemize}
Recall also that $P(n)$ denotes the partition number of $n$.

\begin{thm}\label{T:H2Sn}
The second homology group of the conjugation quandle of the symmetric group can be described as follows:
\begin{align*}
H_2(\Conj(S_n)) \ \cong \ &\Z^{P(n)(P(n)-1)} \times \Z_2^{\sum_{\lambda \vdash n} \RR(\lambda)}\\
& \times  \prod_{\text{odd }u \leq n} \Z_u^{P(n-u)} \times \prod_{\text{even }u \leq n} \left( \Z_u^{P(n-u)-s(n,u)} \times \Z_{\frac{u}{2}}^{s(n,u)} \right).
\end{align*}
\end{thm}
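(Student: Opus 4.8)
The plan is to apply Eisermann's formula \eqref{E:Eisermann} together with Proposition \ref{P:H2Cgroup}, which reduces the computation to understanding, for each partition $\lambda \vdash n$, the abelianised stabiliser $\left(\Stab_{S_n^{\Deh}}(a_\OO)\right)_{Ab}$, where $\OO$ is the conjugacy class indexed by $\lambda$ and $a_\OO$ a chosen permutation of cycle type $\lambda$. The free part is already known to be $\Z^{P(n)(P(n)-1)}$, so I would focus entirely on the torsion, summing the torsion of each $\left(\Stab_{S_n^{\Deh}}(a_\OO)\right)_{Ab}$ over all $\lambda$. First I would invoke the short exact sequence from Proposition \ref{P:H2Cgroup}, which here reads $0 \to \Z \to \left(\Stab_{S_n^{\Deh}}(a_\OO)\right)_{Ab} \to \left(\Stab_{S_n}(a_\OO)\right)_{Ab} \to 0$, with the leftmost $\Z$ mapping as multiplication by $k(\OO)=2$ into the $\CC_{\fg}$-component of $(S_n^{\Deh})_{Ab} \cong \Z$. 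So the real engine is the classical structure of the centraliser $\Stab_{S_n}(a_\OO) = C_{S_n}(a_\OO)$.

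The second step is to recall the explicit description of $C_{S_n}(\alpha)$ for $\alpha$ of cycle type $\lambda$: it is a direct product over the distinct part sizes $u \in \Supp(\lambda)$ of wreath products $\Z_u \wr S_{m_u}$, where $m_u$ is the multiplicity of $u$ in $\lambda$. Abelianising, each cyclic factor $\Z_u$ contributes and each symmetric group $S_{m_u}$ abelianises to $\Z_2$ when $m_u \geq 2$ (i.e.\ when $u \in \RSupp(\lambda)$) and to the trivial group when $m_u=1$. Carefully abelianising the wreath product $\Z_u \wr S_{m_u}$, the base group $\Z_u^{m_u}$ contributes a single $\Z_u$ (the $S_{m_u}$-coinvariants identify all copies), and the top $S_{m_u}$ contributes $\Z_2$ precisely when $m_u \geq 2$. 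Summing the $\Z_2$ contributions across repeated part sizes and across all $\lambda$ should produce the factor $\Z_2^{\sum_{\lambda}\RR(\lambda)}$; the role of $\RR(\lambda)$, with its $-1$ correction when an odd element lies in $\RSupp(\lambda)$, is to account for one $\Z_2$ being absorbed by the extension with the multiplication-by-$2$ map, as I explain next.

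The third and most delicate step is to splice the $\Z_u$ contributions with the central extension by the multiplication-by-$2$ map into $\Z$. The subtlety is that the extension class must be tracked at the level of the Smith normal form of the combined relation matrix: one takes $\left(\Stab_{S_n}(a_\OO)\right)_{Ab} \cong \prod_{u \in \Supp(\lambda)} \Z_u \times \Z_2^{\#\RSupp(\lambda)}$ and forms the extension in which the generating $\Z$ maps via $\times 2$ and lands compatibly with the signature constraint. Here is where the parity of the part sizes enters: an odd cyclic factor $\Z_u$ together with the $\times 2$ does not create $2$-torsion interaction, whereas an even $u$ interacts, turning a $\Z_u$ into $\Z_{u/2}$ exactly when $u$ is the minimal even part size carrying the smallest $2$-adic valuation and no odd repeated part is present to absorb the parity elsewhere --- this is precisely the combinatorial content of $s(n,u)$ and of the odd-element clause in $\RR(\lambda)$. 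I would organise this by fixing $\lambda$, writing the full abelian group as a cokernel of an explicit integer matrix encoding the cyclic orders, the $\Z_2$'s from repeated parts, and the $\times 2$ extension, and then reading off its torsion via Smith normal form while carefully distinguishing the odd/even and $2$-adic-valuation cases.

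The main obstacle I anticipate is this last bookkeeping: correctly determining, partition by partition, how the central $\Z$ glued in by the $\times 2$ map redistributes the $2$-torsion among the cyclic factors $\Z_u$ and the wreath-product $\Z_2$'s. The clean statement of the theorem --- with the global counts $P(n-u)$ of partitions containing a part of size $u$, the correction $s(n,u)$ splitting $\Z_u$ into $\Z_{u/2}$, and the $\RR(\lambda)$ adjustment --- strongly suggests that the per-$\lambda$ analysis must be reassembled into generating-function or direct counting identities (e.g.\ the number of $\lambda \vdash n$ with a part of size $u$ equals $P(n-u)$). The hard part will be verifying that the local Smith-normal-form computation for each $\lambda$, once summed, collapses exactly into these global quantities, and in particular pinning down the minimality-in-$u$ and uniform-$2$-adic-valuation conditions defining $s(n,u)$ so that no $2$-torsion is double-counted or lost across the two products over odd and even $u$.
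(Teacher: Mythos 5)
Your skeleton is exactly the paper's: Eisermann's formula plus Proposition \ref{P:H2Cgroup}; the classical centraliser structure $\Stab_{S_n}(a_\lambda) \cong \prod_u \Z_u \wr S_{m_u}$ with abelianisation $\prod_{u \in \Supp(\lambda)} \Z_u \times \prod_{v \in \RSupp(\lambda)} \Z_2$ (this is \eqref{E:StabSnAb}); a per-$\lambda$ Smith-normal-form analysis of the central extension by $\Z = \langle t\rangle$; and the global counts ($P(n-u)$ partitions contain a part $u$, etc.). The qualitative mechanisms you name are also the right ones: when $\RSupp(\lambda)$ contains an odd element, one $\Z_2$ merges into the free part, and otherwise exactly one even part size $u = m(\lambda)$ has its $\Z_u$ demoted to $\Z_{u/2}$.

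The genuine gap is at the decisive step: you never say how the entries of your relation matrix are obtained, and the data you propose to use does not determine them. The ``multiplication by $2$'' statement of Proposition \ref{P:H2Cgroup} only records that $t$ has degree $2$ in $(S_n^{\Deh})_{Ab} \cong \Z$ (it is used there to prove injectivity of the leftmost map); it does \emph{not} fix the class of the extension $0 \to \Z \to \bigl(\Stab_{S_n^{\Deh}}(a_\lambda)\bigr)_{Ab} \to \bigl(\Stab_{S_n}(a_\lambda)\bigr)_{Ab} \to 0$, and non-isomorphic middle groups are compatible with it. What pins the matrix down is a computation with explicit lifts inside $S_n^{\Deh} = \As(\Transp_n)$: a $u$-cycle $c_i$ must be lifted as a product $e_u$ of $u-1$ transposition generators (note that $e_{c_i}$ itself is not an element of $S_n^{\Deh}$ when $u>2$, since the Dehn quandle consists of transpositions only), and a swap of two $v$-cycles as a product $f_v$ of $v$ transposition generators; since $e_u^u$ and $f_v^2$ lie in the kernel $\langle t\rangle$ and $\varepsilon(t)=2$, the degree map gives the exact relations $e_u^u = t^{u(u-1)/2}$ and $f_v^2 = t^{v}$ (relations \eqref{E:H2Snpowers1}--\eqref{E:H2Snpowers2}). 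One must then also prove these are \emph{all} the relations: the paper does so by pushing an arbitrary relation down to \eqref{E:StabSnAb}, forcing the exponents of the $e_u$ and $f_v$ to be multiples of $u$ and $2$ respectively, so that the relation collapses to an equality of powers of the infinite-order element $t$. Only with these two ingredients does your Smith-normal-form step have a well-defined input, and only then do the two cases (an odd $v_0 \in \RSupp(\lambda)$ yields $F_{v_0}^2 = t$, making $t$ a square; no odd repeated part yields $E_u^u = t^{u/2}$ for even $u$, whence the $2$-adic minimality analysis behind $s(n,u)$) actually fall out. Your ``signature constraint'' is a posteriori the correct invariant --- the exponents modulo the orders depend only on parities --- but it is a consequence of this degree computation, not a usable substitute for it.
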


Note the presence of torsion which grows very fast with $n$. In particular, adding a part $1$ to any partition $\lambda \vdash n$, one sees that $H_2(\Conj(S_n))$ is a subgroup of $H_2(\Conj(S_{n+1}))$ for any $n$. We get a much more interesting picture than the group homology case, where $H_2(S_n)$ is simply $0$ for $n<4$ and $\Z_2$ for $n \geq 4$.

For small $n$, our formulas yield 
\begin{align*}
H_2(\Conj(S_3)) \ \cong \ & \Z^6 \times \Z_3,\\
H_2(\Conj(S_4)) \ \cong \ & \Z^{20} \times \Z_2^3 \times \Z_3,\\
H_2(\Conj(S_5)) \ \cong \ & \Z^{42} \times \Z_2^3 \times \Z_3^2 \times \Z_5,\\
H_2(\Conj(S_6)) \ \cong \ & \Z^{110} \times \Z_2^4 \times \Z_3^4 \times \Z_4^2 \times \Z_5,\\
H_2(\Conj(S_7)) \ \cong \ & \Z^{210} \times \Z_2^7 \times \Z_3^6 \times \Z_4^2 \times \Z_5^2 \times \Z_7.
\end{align*}

\begin{proof}
Take the conjugacy class $\OO_{\lambda}$ of $S_n$ indexed by a partition $\lambda = (\lambda_1, \ldots, \lambda_k) \vdash n$, and its representative $a_{\lambda}=c_1 \cdots c_k$ written as a product of cycles $c_i$ of length $\lambda_i$ with disjoint support. The proposition above yields the isomorphism,
\[\left(\Stab_{A(S_n)}(a_{\lambda}) \cap \Ker (\varepsilon)\right)_{Ab} \ \cong \left(\Stab_{S_n^{\Deh}}(a_{\lambda})\right)_{Ab} \ \times \ \Z^{P(n)-2},\]
and the short exact sequence
\begin{equation}\label{E:SESSn}
0 \ \to \ \Z \ \overset{\iota}{\to} \ \left(\Stab_{S_n^{\Deh}}(a_{\lambda})\right)_{Ab}  \ \overset{\widetilde{\pi}}{\to} \ \left(\Stab_{S_n}(a_{\lambda})\right)_{Ab}  \ \to \ 0.
\end{equation}
The subgroup $\Stab_{S_n}(a_{\lambda})$ of $S_n$ is generated by the cycles $c_i$, and, for each $v \in \RSupp(\lambda)$ appearing in $\lambda$ $k_{v} \geq 2$ times, by a subgroup isomorphic to $S_{k_{v}}$, which permutes the supports of the $v$-cycles. In the abelianisation, the cycle $c_i$ with $\lambda_i \notin \RSupp(\lambda)$ yields a factor $\Z_{\lambda_i}$, and the $\Z_{v}^{k_{v}} \rtimes S_{k_{v}}$ component of $\Stab_{S_n}(a_{\lambda})$ corresponding to a $v \in \RSupp(\lambda)$ yields $\Z_{v} \times \Z_2$. Hence
\begin{equation}\label{E:StabSnAb}
\left(\Stab_{S_n}(a_{\lambda})\right)_{Ab} \ \cong \ \prod_{u \in \Supp(\lambda)} \Z_{u} \times \prod_{v \in \RSupp(\lambda)} \Z_2.
\end{equation}

 For each $u \in \Supp(\lambda)$, $u \neq 1$, take a generator $c_i=c_{i;1}  \cdots c_{i;u-1}$ of the corresponding component $\Z_{u}$ written as a product of $u-1$ transpositions, and put $e_{u}:= e_{c_{i;1}} \cdots e_{c_{i;u-1}}$ (we abusively use the same notations in a group and in its abelianisation). Finally, for $v \in \RSupp(\lambda)$, denote by $d_{v}$ any element of $S_n$ permuting two cycles of length $v$ in $a_{\lambda}$. It generates the copy of $\Z_2$ in $\Z_{v} \times \Z_2$. It can be written as a composition of $v$ transpositions with disjoint support, $d_{v} = d_{v;1}\cdots d_{v;v}$. Let $f_{v}$ be its lift $e_{d_{v;1}} \cdots e_{d_{v;v}}$ in $S_n^{\Deh}$. By \eqref{E:SESSn}, the abelian group $\left(\Stab_{S_n^{\Deh}}(a_{\lambda})\right)_{Ab}$ is generated by the $e_{u}$, the $f_{v}$, and a generator $t=e_{\tau}^2$ of $\Im(\iota)$, independent of the choice of transposition $\tau$. We need to determine all relations between these generators. The most obvious ones concern the powers of these generators:
\begin{align}
e_{u}^{u} &= t^{\frac{u(u-1)}{2}}, & u \in \Supp(\lambda), u \neq 1\label{E:H2Snpowers1}\\
f_{v}^2 &= t^{v}, & v \in \RSupp(\lambda).\label{E:H2Snpowers2}
\end{align} 
Indeed, we chose (the minimal) exponents such that these powers of the generators $e_{u}$ and $f_{v}$ lie in $\Ker(\widetilde{\pi})$ and thus coincide with some powers of $t$, which are determined by computing degrees. Further, consider a relation
\begin{equation}\label{E:H2SnRelation}
\prod_{u \in \Supp(\lambda)} e_{u}^{\alpha_{u}} \prod_{v \in \RSupp(\lambda)} f_{v}^{\beta_{v}} \ = \ t^{\gamma}, \hspace*{1cm} \alpha_{u},\beta_{v}, \gamma \in \Z,
\end{equation}
in $\left(\Stab_{S_n^{\Deh}}(a_{\lambda})\right)_{Ab}$. Applying $\widetilde{\pi}$ and comparing the image in \eqref{E:StabSnAb}, one sees that each $\alpha_{u}$ should be a multiple of $u$, and each $\beta_{v}$ a multiple of $2$. Thus relations \eqref{E:H2Snpowers1}-\eqref{E:H2Snpowers2} can be used to turn \eqref{E:H2SnRelation} into an equality between two powers of $t$, which can only be trivial since $t$ generates a copy of $\Z$.

Let us now try to simplify relations \eqref{E:H2Snpowers1}-\eqref{E:H2Snpowers2} by variable substitution.
\begin{itemize}
\item For odd $u$, put $E_{u}:=e_{u}t^{-\frac{(u-1)}{2}}$. Relation \eqref{E:H2Snpowers1} then becomes $E_{u}^{u}=1$.
\item For even $u$, put $E_{u}:=e_{u}t^{-\frac{(u-2)}{2}}$. Relation \eqref{E:H2Snpowers1} then becomes $E_{u}^{u}=t^{\frac{u}{2}}$.
\item For odd $v$, put $F_{v}:=f_{v}t^{-\frac{(v-1)}{2}}$. Relation \eqref{E:H2Snpowers2} then becomes $F_{v}^{2}=t$.
\item For even $v$, put $F_{v}:=f_{v}t^{-\frac{v}{2}}$. Relation \eqref{E:H2Snpowers2} then becomes $F_{v}^{2}=1$.
\end{itemize}

\begin{description}
\item[Case 1] There exists an odd $v_0 \in \RSupp(\lambda)$. Put $\tau:= F_{v_0}$. The variable $t=\tau^2$ can then be omitted. For even $u$, put $\overline{E}_{u}:=E_{u} \tau^{-1}$. Relation \eqref{E:H2Snpowers1} then becomes $\overline{E}_{u}^{u}=1$. Also, for odd $v \neq v_0$, put $\overline{F}_{v}:=F_{v}\tau^{-1}$. Relation \eqref{E:H2Snpowers2} then becomes $\overline{F}_{v}^{2}=1$. All relations are thus transformed into the triviality of certain powers of generators. Put together, they yield
\[\left(\Stab_{S_n^{\Deh}}(a_{\lambda})\right)_{Ab} \ \cong \ \left( \prod_{u \in \Supp(\lambda)} \Z_{u} \right) \times \Z_2^{\# \RSupp(\lambda) -1} \times \Z.\]
\item[Case 2] There is no odd $v \in \RSupp(\lambda)$. Then one has a factor $\Z_{u}$ for each odd $u \in \Supp(\lambda)$, and a factor $\Z_2$ for each (necessarily even) $v \in \RSupp(\lambda)$. The remaining part $G_1$ has as generators the $E_{u}$ for even $u \in \Supp(\lambda)$, together with the generator $t$; they are subject to the relations $E_{u}^{u}=t^{\frac{u}{2}}$ only. Denote the values of $u$ concerned as $2h_1, \ldots, 2h_s,\ h_i \in \N$, arranged so that $h_1$ is the minimal among the $h_i$ having the minimal $v_2(h_i)$ (where $v_2(h_i)$ is the maximal power of $2$ dividing $h_i$). It will be denoted by $m(\lambda):=h_1$. Let us show that the abelian group $G_1$ isomorphic to
\[G_2 \ := \ \Z_{h_1} \times \Z_{2h_2} \times \cdots \times \Z_{2h_s} \times \Z.\]
Our defining relations for $G_1$ can be arranged into the following matrix:
	\[M_1 = \left\lbrack \begin{array}{ccccccccc}
	2h_1 && && && && -h_1 \\
	& 2h_2 &&& && && -h_2 \\
	 && && \cdots && && \cdots \\
	 && && && 2h_s && -h_s 	
	\end{array} \right\rbrack.\]
For $G_2$, one gets
	\[M_2 = \left\lbrack \begin{array}{ccccccccc}
	h_1 && && && && 0 \\
	& 2h_2 &&& && && 0 \\
	 && && \cdots && && 0 \\
	 && && && 2h_s	&& 0 
	\end{array} \right\rbrack.\]
By the existence and uniqueness of the Smith normal form, our two abelian groups are isomorphic if and only if, for all $i$, the matrices above share the same greatest common divisors (gcd) of the determinants of all their $i\times i$ minors. For $M_1$, such a determinant is a product of factors $2h_q$ for distinct $q$, possibly times a factor $h_r$. So the gcd of such determinants for a fixed $i$ is $2^{i-1}$ times the gcd of the products $h_{q_1} \cdots h_{q_i}$ for all $1 \leq q_1 < \cdots < q_i \leq n$. For $M_2$, such a determinant is a product of factors $2h_q$ with distinct $q>1$, possibly times the factor $h_1$. For any prime $p>2$, their gcd will contain the same power of $p$ that the gcd for $M_1$. For $p=2$, one of the products containing the minimal power of $2$ for both matrices is $h_1*2h_2*\cdots*2h_i$. Hence the two gcd coincide. Summarising, one gets
\begin{equation}\label{E:H2DifficultCase}
\left(\Stab_{S_n^{\Deh}}(a_{\lambda})\right)_{Ab} \ \cong \ \Z_{\frac{m(\lambda)}{2}} \times \left( \prod\limits_{\substack{u \in \Supp(\lambda) \\ u \neq m(\lambda)}} \Z_{u} \right) \times \Z_2^{\# \RSupp(\lambda) } \times \Z,
\end{equation}
where the first term is omitted if $\lambda$ contains no even parts.
\end{description}

By \eqref{E:Eisermann}, the terms we have just computed need to be summed for all $\lambda \vdash n$.
\begin{itemize}
\item The free part of each of these $P(n)$ terms is $\Z \times \Z^{P(n)-2} \cong \Z^{P(n)-1}$. Hence the free part of the sum is $\Z^{P(n)(P(n)-1)}$, as predicted by \cite{EtGrRackCohom,LiNel}.
\item Summing the $\Z_2$ factors corresponding to the generators $F_v$, one gets
\[\prod_{\lambda \vdash n} \Z_2^{\RR(\lambda)} \cong \Z_2^{\sum_{\lambda \vdash n} \RR(\lambda)}.\]
\item Each odd $u \in \Supp(\lambda)$ brings a factor $\Z_u$. Since there are $P(n-u)$ partitions $\lambda \vdash n$ having $u \in \Supp(\lambda)$, the overall contribution to $H_2$ is $\Z_u^{P(n-u)}$. 
\item The most delicate case is that of even $u \in \Supp(\lambda)$, since they bring either a factor $\Z_u$, or, when $\lambda$ has no repeating odd parts and $u=m(\lambda)$, a factor $\Z_{\frac{u}{2}}$. The sum of these factors is 
\[\Z_u^{P(n-u)-s(n,u)} \times \Z_{\frac{u}{2}}^{s(n,u)}.\]
\end{itemize}
The sum of these four parts yields the announced result.
\end{proof}

\begin{rmr}
The proof easily adapts to a computation of $H_2(\Transp_n)$ for the $1$-orbit quandle $\Transp_n$ of all transpositions in $S_n$, endowed with the conjugation operation. Since $\As(\Transp_n)=S_n^{\Deh}$, one gets
\[H_2(T_n)\ \cong\ \left(\Stab_{S_n^{\Deh}}(a_{(2,1,\ldots,1)}) \cap \Ker (\varepsilon)\right)_{Ab}.\]
This is generated by elements $t$, $e_2$, and, if $n \geq 4$, $f_1$, subject to relations $e_2^2=f_1^2=t$. Since $\varepsilon(e_2) = \varepsilon(f_1)=1$, intersection with $\Ker (\varepsilon)$ is zero for $n \leq 3$, and is generated by the element $e_2f_1^{-1}$ of order $2$ otherwise. One obtains
\[H_2(T_n)\ \cong\ \begin{cases} 0, & n \leq 3,\\
\Z_2, & n \geq 4,
\end{cases}\]
which recovers a computation from \cite{GarIglVen}.
\end{rmr}

\begin{rmr}
Note that in all the cases from the proof, the torsion of  $\Stab_{S_n^{\Deh}}(a_{\lambda})$ differs from its finite quotient $\Stab_{S_n}(a_{\lambda})$, which behaves more regularly, by at most one factor (one factor $\Z_2$ disappears, or one factor $\Z_{2h}$ becomes $\Z_h$). This suggests that an alternative organisation of terms in the most difficult case \eqref{E:H2DifficultCase} could probably lead to a more compact final formula, which we were unable to find. 
\end{rmr}

\subsection*{Acknowledgments}
The author is grateful to Carsten Dietzel, Eddy Godelle, Neha Nanda, and Markus Szymik for helpful remarks and pointers to the literature. 

\bibliographystyle{alpha}
\bibliography{refs}

\def\cprime{$'$}
\begin{thebibliography}{AFGV11}

\bibitem[AFGV11]{NicholsSimpleRacks}
N.~Andruskiewitsch, F.~Fantino, G.~A. Garc{\'{\i}}a, and L.~Vendramin.
\newblock On {N}ichols algebras associated to simple racks.
\newblock In {\em Groups, algebras and applications}, volume 537 of {\em
  Contemp. Math.}, pages 31--56. Amer. Math. Soc., Providence, RI, 2011.

\bibitem[AG03]{AndrGr}
Nicol{\'a}s Andruskiewitsch and Mat{\'{\i}}as Gra{\~n}a.
\newblock From racks to pointed {H}opf algebras.
\newblock {\em Adv. Math.}, 178(2):177--243, 2003.

\bibitem[AHT22]{AkitaEtAlFiniteQuotient}
Toshiyuki Akita, Aoi Hasegawa, and Masayoshi Tanno.
\newblock Structure of the associated groups of quandles.
\newblock {\em Kodai Math. J.}, 45(2):270--281, 2022.

\bibitem[Aki20]{AkitaCoxeter}
Toshiyuki Akita.
\newblock The adjoint group of a {C}oxeter quandle.
\newblock {\em Kyoto J. Math.}, 60(4):1245--1260, 2020.

\bibitem[Bes03]{BessisDual}
David Bessis.
\newblock The dual braid monoid.
\newblock {\em Ann. Sci. \'{E}cole Norm. Sup. (4)}, 36(5):647--683, 2003.

\bibitem[BKL98]{BKLDual}
Joan Birman, Ki~Hyoung Ko, and Sang~Jin Lee.
\newblock A new approach to the word and conjugacy problems in the braid
  groups.
\newblock {\em Adv. Math.}, 139(2):322--353, 1998.

\bibitem[BPT23]{VirtualArtin}
Paolo Bellingeri, Luis Paris, and Anne-Laure Thiel.
\newblock Virtual {A}rtin groups.
\newblock {\em Proc. Lond. Math. Soc. (3)}, 126(1):192--215, 2023.

\bibitem[Car12]{CarterSurvey}
J.~Scott Carter.
\newblock A survey of quandle ideas.
\newblock In {\em Introductory lectures on knot theory}, volume~46 of {\em Ser.
  Knots Everything}, pages 22--53. World Sci. Publ., Hackensack, NJ, 2012.

\bibitem[Cho23]{ChouraquiThompson}
Fabienne Chouraqui.
\newblock The {Y}ang-{B}axter equation and {T}hompson's group {$F$}.
\newblock {\em Internat. J. Algebra Comput.}, 33(3):547--584, 2023.

\bibitem[{Cla}10]{ClauwensAl}
F.~J.~B.~J. {Clauwens}.
\newblock The adjoint group of an Alexander quandle.
\newblock {\em Preprint: arXiv:1011.1587}, 2010.

\bibitem[CN24]{chemin2024minimalpresentationfinitequotients}
Hugo Chemin and Neha Nanda.
\newblock Minimal presentation, finite quotients and lower central series of
  cactus groups.
\newblock {\em Preprint: arXiv:2401.07924}, 2024.

\bibitem[Dar24]{DarneHomotopyWelded}
Jacques Darn\'e.
\newblock Milnor invariants of braids and welded braids up to homotopy.
\newblock {\em Algebr. Geom. Topol.}, 24(3):1277--1320, 2024.

\bibitem[Doi24]{pracks}
Anastasia Doikou.
\newblock Parametric set-theoretic yang-baxter equation: p-racks, solutions \&
  quantum algebras.
\newblock {\em Preprint: arXiv:2405.04088}, 2024.

\bibitem[DRS23]{SinghEtAlDehnQuandles}
Neeraj~K. Dhanwani, Hitesh~R. Raundal, and Mahender Singh.
\newblock Dehn quandles of groups and orientable surfaces.
\newblock {\em Fund. Math.}, 263(2):167--201, 2023.

\bibitem[EG03]{EtGrRackCohom}
P.~Etingof and M.~Gra{\~n}a.
\newblock On rack cohomology.
\newblock {\em J. Pure Appl. Algebra}, 177(1):49--59, 2003.

\bibitem[Eis14]{Eisermann}
Michael Eisermann.
\newblock Quandle coverings and their {G}alois correspondence.
\newblock {\em Fund. Math.}, 225:103--168, 2014.

\bibitem[EN15]{ElhamdadiNelsonBook}
Mohamed Elhamdadi and Sam Nelson.
\newblock {\em Quandles---an introduction to the algebra of knots}, volume~74
  of {\em Student Mathematical Library}.
\newblock American Mathematical Society, Providence, RI, 2015.

\bibitem[Fri24]{SDPhysics}
Tobias Fritz.
\newblock Self-distributive structures in physics.
\newblock {\em Preprint: arXiv:2403.14458}, 2024.

\bibitem[GH95]{LOG}
N.~D. Gilbert and James Howie.
\newblock L{OG} groups and cyclically presented groups.
\newblock {\em J. Algebra}, 174(1):118--131, 1995.

\bibitem[GIV16]{GarIglVen}
Agust{\'i}n Garc{\'i}a~Iglesias and Leandro Vendramin.
\newblock An explicit description of the second cohomology group of a quandle.
\newblock {\em Mathematische Zeitschrift}, pages 1--23, 2016.

\bibitem[Kam16]{KamadaSurvey}
Seiichi Kamada.
\newblock Quandles and knot invariants.
\newblock {\em Sugaku Expositions}, 29(1):17--39, 2016.

\bibitem[Kin07]{Kinyon}
Michael~K. Kinyon.
\newblock Leibniz algebras, {L}ie racks, and digroups.
\newblock {\em J. Lie Theory}, 17(1):99--114, 2007.

\bibitem[Kul93]{KulikovCGroups}
Vik.~S. Kulikov.
\newblock Alexander polynomials of plane algebraic curve.
\newblock {\em Izv. Ross. Akad. Nauk Ser. Mat.}, 57(1):76--101, 1993.

\bibitem[Kul94]{KulikovCGroupsGeom}
V.~S. Kulikov.
\newblock Geometric realization of {$C$}-groups.
\newblock {\em Izv. Ross. Akad. Nauk Ser. Mat.}, 58(4):194--203, 1994.

\bibitem[Leb18]{LebedSDYBESurvey}
Victoria Lebed.
\newblock Applications of self-distributivity to {Y}ang-{B}axter operators and
  their cohomology.
\newblock {\em J. Knot Theory Ramifications}, 27(11):1843012, 20, 2018.

\bibitem[LM21]{LebMorAbelian}
Victoria Lebed and Arnaud Mortier.
\newblock Abelian quandles and quandles with abelian structure group.
\newblock {\em J. Pure Appl. Algebra}, 225(1):Paper No. 106474, 22, 2021.

\bibitem[LN03]{LiNel}
R.~A. Litherland and Sam Nelson.
\newblock The {B}etti numbers of some finite racks.
\newblock {\em J. Pure Appl. Algebra}, 178(2):187--202, 2003.

\bibitem[LV18]{LebVen3}
Victoria Lebed and Leandro Vendramin.
\newblock On structure groups of set-theoretic solutions to the
  {Y}ang--{B}axter equation.
\newblock {\em Proc. Edinb. Math. Soc.}, 2018.

\bibitem[Mos23]{mostovoy2023roundtwingroupsstrands}
Jacob Mostovoy.
\newblock Round twin groups on few strands.
\newblock {\em Preprint: arXiv:2303.10737}, 2023.

\bibitem[NNS24]{VirtualTwin}
Tushar~Kanta Naik, Neha Nanda, and Mahender Singh.
\newblock Virtual planar braid groups and permutations.
\newblock {\em J. Group Theory}, 27(3):443--483, 2024.

\bibitem[Nos17]{NosakaAdjoint}
Takefumi Nosaka.
\newblock Central extensions of groups and adjoint groups of quandles.
\newblock In {\em Geometry and analysis of discrete groups and hyperbolic
  spaces}, volume B66 of {\em RIMS K\^{o}ky\^{u}roku Bessatsu}, pages 167--184.
  2017.

\bibitem[Ryd93]{Ryder}
Hayley Ryder.
\newblock The structure of racks, 1993.
\newblock PhD thesis, University of Warwick.

\bibitem[SV24]{SzymikPowers}
Markus Szymik and Torstein Vik.
\newblock Groups, conjugation and powers.
\newblock {\em To appear in Involve}, 2024.

\bibitem[Szy24]{SzymikThompson}
Markus Szymik.
\newblock Thompson’s quandle.
\newblock {\em Preprint: arXiv:2407.06058}, 2024.

\end{thebibliography}
\end{document}